\documentclass[times]{TRR}
\usepackage{textcomp}
\usepackage[brazil, english]{babel}
\usepackage{array}
\usepackage{amsmath, amssymb, amsthm, bm, amsfonts}
\usepackage[linesnumbered, ruled]{algorithm2e}
\usepackage{caption}
\usepackage{float} 
\usepackage{subcaption}
\usepackage{xcolor}
\usepackage{stfloats}
\usepackage{subfiles}
\usepackage{textcomp}
\usepackage{stfloats}
\usepackage{graphicx}
\usepackage{indentfirst}
\usepackage{pifont}
\usepackage{makecell}
\usepackage{diagbox}
\usepackage{mathtools}
\usepackage{stmaryrd}
\RequirePackage{CJKnumb}
\DeclareMathSizes{10}{10}{7}{5}
\usepackage{color, url, balance, times, helvet, courier}
\usepackage{booktabs, threeparttable, multirow, adjustbox, threeparttable}
\newtheorem{assumption}{Assumption}
\newtheorem{proposition}{Proposition}
\newtheorem{definition}{Definition}
\newtheorem{Property}{Property}
\newtheorem{theorem}{Theorem}

\theoremstyle{remark}
\newtheorem{remark}{Remark}
\usepackage{moreverb,url}
\usepackage[colorlinks,bookmarksopen,bookmarksnumbered,citecolor=blue,urlcolor=blue,linkcolor=blue]{hyperref}
\usepackage[switch]{lineno}

\newcommand\BibTeX{{\rmfamily B\kern-.05em \textsc{i\kern-.025em b}\kern-.08em
T\kern-.1667em\lower.7ex\hbox{E}\kern-.125emX}}

\begin{document}

\runninghead{Weifeng Yang}

\title{An Inertial Block Proximal Linearized Method with Adaptive Momentum for Nonconvex and Nonsmooth Optimization}

\author{Weifeng Yang\affilnum{1}}

\affiliation{\affilnum{1}Yunnan Earthquake Administration, Kunming 650224, China}
\corrauth{Weifeng Yang, ywf841673182@gmail.com}

\begin{abstract}
In this paper, we consider a class of multiblock nonconvex nonsmooth optimization problems, which covers many applications such as the analysis of pre-earthquake anomalies and machine learning. 
To solve this class of problems, we propose the inertial block proximal linearized method with two-phase adaptive momentum (IBPL$^+$-TP). 
Compared to the current methods, our method possesses three main advantages: 
(1) it introduces a two-phase adaptive momentum strategy to effectively update the extrapolation parameters, 
(2) it allows using two different extrapolation points to accelerate the convergence, 
(3) it allows the extrapolation parameters of these two extrapolation points to be independent of and unconstrained by all other parameters.    
While maintaining the above advantages, we prove that our method ensures the monotonic convergence of the objective function of this class of problems, and we also prove that the sequence generated by our method globally converges to a critical point, as well as establish the convergence rate of our method.  
To demonstrate the effectiveness of our method, we apply it to solve two nonconvex and nonsmooth machine learning problems, namely sparse nonnegative matrix factorization with $\ell_0$-constraints and sparse nonnegative CP decomposition with $\ell_0$-constraints. 
The numerical experimental results on solving these problems show that our method outperforms several state-of-the-art methods. 
\end{abstract}

\keywords{
Nonconvex nonsmooth optimization, alternating proximal linearized method, inertial method, sparse nonnegative matrix factorization, sparse nonnegative CP decomposition. }

\maketitle

\section{Introduction}
\label{sec:introduction}
In this paper, we consider a class of nonconvex and nonsmooth optimization problems which can be expressed as follows. 
\begin{align}
    \min\limits_{(\left\{{x_{i}} \right\}^{N}_{i=1})}J(\left\{{x_{i}} \right\}^{N}_{i=1})=H(\left\{{x_{i}} \right\}^{N}_{i=1})+\sum_{i=1}^{N}F_{i}(x_{i}),\label{e11}
\end{align}
where $d_{i} \in \mathbb{N}, ~H: \mathbb{D}_{H}  \rightarrow\mathbb{R},~\mathbb{D}_{H} \subseteq \prod_{i=1}^N \mathbb{R}^{d_{i}}, ~F_{i}:\mathbb{D}_{F_{i}} \rightarrow \mathbb{R},~\mathbb{D}_{F_{i}} \subseteq \mathbb{R}^{d_{i}},~ \mathrm{dom} ~ J=\mathbb{D}_{H}\cap (\cap_{i=1}^{N} \mathbb{D}_{F_{i}})$. $F_{i}$ is a proper, lower semicontinuous (possibly nonconvex and nonsmooth) function (e.g.,  $\ell_0$ norm), $H$ is a continuously differentiable (possibly nonconvex) function. 
Many remarkable application problems can be modeled as Eq. (\ref{e11}), e.g., analysis of pre-earthquake abnormal data \cite{zhu2021analysis,fan2022analysis}, regression problems \cite{huang2021prediction,chang2023hybrid}, and tensor decomposition \cite{zhang2025noisy,yang2026graph}, etc.

To solve the problem presented in Eq. (\ref{e11}), the commonly used methods include the Proximal Alternating Minimization (PAM) method \cite{attouch2010proximal,dang2025two} and the Smoothing method \cite{li2024smoothing,chen2012smoothing}. 
The drawback of the PAM method is that it incurs substantial computational cost to obtain the closed-form solutions of the problems, and the drawback of the Smoothing method is that it requires more restrictive assumptions about Eq. (\ref{e11}) and can only obtain the approximate solutions. 
To address these issues, the proximal alternating linear minimization (PALM) method \cite{bolte2014proximal} proposed the block proximal linear operator, i.e., 
\begin{align}
    && x^{k+1}_{j} &\in \operatorname*{\arg\min}\limits_{x} \Big[F_{j}(x)+\frac{1}{2\sigma^{k}_{j}} \|x-x^{k}_{j}\|^{2}\notag\\
    && & +\langle  x-x^{k}_{j}, \nabla_{x_{j}} H(\left\{{x^{k+1}_{i}} \right\}^{j-1}_{i=1},x_{j}^{k},\left\{{x^{k}_{i}} \right\}^{N}_{i=j+1}) \rangle \Big],
    \label{e12}
\end{align}
where $L_{\nabla_{x_{j}} H}$ is the Lipschitz constant of $\nabla_{x_{j}} H(\left\{{x_{i}} \right\}^{N}_{i=1})$ and $\sigma^{k}_{j}\in (0,\frac{1}{L_{\nabla_{x_{j}} H}})$ is the step size. 
By utilizing Eq. (\ref{e12}), the PALM method ensures the convergence and global convergence of Eq. (\ref{e11}).

The method of using inertial terms (extrapolation terms) to accelerate convergence is widely employed in various optimization problems. 
In order for this inertial PALM method to also ensure the convergence and global convergence of Eq. (\ref{e11}), \cite{pock2016inertial} proposed to use an auxiliary function to replace Eq. (\ref{e11}), and to maintain the constraint relationship between the extrapolation parameters and other parameters (i.e.,  the value range of the extrapolation parameter is constrained by the value of other parameters). 
This constraint relationship and this replacement eliminate the impact of the extrapolation terms in the convergence analysis, thereby making this inertial PALM method applicable to the proof framework proposed by \cite{bolte2014proximal}. 
Based on this proof technique, subsequent researchers have also proposed numerous inertial PALM methods. 

However, as outlined above, the methods mentioned above struggle to ensure the monotonic decrease of Eq. (\ref{e11}) during the iteration process since these methods require an auxiliary function to replace Eq. (\ref{e11}), and non-monotonic decrease will weaken the numerical performance \cite{xu2015alternating,yang2023accelerated}. 
Moreover, the methods mentioned above are necessary to maintain the constraint relationship between extrapolation parameters and other parameters, such as step size. This constraint relationship eliminates the impact of the extrapolation terms in the convergence analysis, but it makes the selection and update of the extrapolation parameters extremely complex and difficult even for a single extrapolation point, let alone when using two different extrapolation points. 
These issues obviously diminish the numerical effectiveness of these methods and make it extremely difficult to extend the applicability of these methods to the case where $N>2$. 
Although \cite{yang2023accelerated} proposed a method that ensures the independence of the extrapolation parameter from other parameters, this method is restricted to the case of using a single extrapolation point and cannot be extended to the case of using two different extrapolation points, and this method does not employ a two-phase adaptive momentum strategy to separate the rapid descent phase from the steady refinement phase, thereby reducing its flexibility and numerical performance.

To address the above-mentioned issues, we propose the method named the inertial block proximal linearized method with two-phase adaptive momentum (IBPL$^+$-TP) for solving Eq. (\ref{e11}). IBPL$^+$-TP can be viewed as an inertial version of PALM with the restart step and two different extrapolation points, 
but we propose a modified proof framework, based on the proof framework proposed by \cite{bolte2014proximal}, to demonstrate that IBPL$^+$-TP can still guarantee the monotonic convergence and global convergence properties of Eq. (\ref{e11}) even when the extrapolation parameters of these two different extrapolation points are independent of and unconstrained by any other parameters. 
This relaxed condition and the corresponding scheme of using two different extrapolation points to add inertial force can lead to significantly better numerical performance and substantially reduce the difficulty of updating the extrapolation parameters.  
Furthermore, we develop a two-phase adaptive momentum strategy for the extrapolation parameters of the proposed method, which explicitly separates the rapid descent phase from the steady refinement phase to effectively update the extrapolation parameters, thereby accelerating convergence and enhancing numerical performance. 
Sparse nonnegative matrix factorization (SNMF) and sparse nonnegative CP decomposition (SNCP) are both the powerful feature extraction tools widely used in dimensionality reduction and analysis of multi-dimensional data \cite{liu2025inertial}, 
but SNMF with $\ell_0$ norm constraints and SNCP with $\ell_0$ norm constraints are both NP-hard, nonsmooth and nonconvex problems,  we will apply our proposed method to solve these problems. 
The contributions of this paper are as follows.

\subsection{Contributions}
(1) We propose the inertial block proximal linearized method with two-phase adaptive momentum (IBPL$^+$-TP) for solving Eq. (\ref{e11}). The proposed method provides an algorithm framework that not only employs two different extrapolation points but also allows the extrapolation parameters of these extrapolation points to have a value range that does not depend on the values of any other parameters. 
Additionally, we introduce a two-phase adaptive momentum strategy for the extrapolation parameters of the proposed method, which explicitly separates the rapid descent phase from the steady refinement phase,  thereby effectively updating the extrapolation parameters and accelerating convergence.  
These advantages significantly improve both the flexibility and effectiveness of IBPL$^+$-TP and enhance the numerical performance.

(2) While maintaining the above advantages, we prove that our method ensures the monotonic convergence of Eq. (\ref{e11}).  Additionally, we also prove that the sequence generated by our method is globally convergent to a critical point and establish the convergence rate of our method.

(3) We apply our method to solve the SNMF with $\ell_0$-constraints and the SNCP  with $\ell_0$-constraints problems. The numerical experimental results on solving these problems demonstrate the superior performance and efficiency of our proposed method.  We also investigate both the numerical stability and parameter sensitivity of the proposed method.


\section{Notations and mathematical definitions}
\label{sec2}
In this section, we introduce some symbol definitions and the preliminary knowledge that will be used throughout this paper. We define $\mathbb{R}$ as the set of all real numbers, $\mathbb{N}$ as the set of all natural numbers. 
Additionally, Table \ref{notation} summarizes the notation used in this paper. 

\begin{table}[!ht] 
\centering 
\caption{Summary of frequently-used notations}
\begin{tabular}{p{2.7cm}|p{5cm}} \hline%
Notation & Definition \\ \hline
$\left\{{x_{i}} \right\}^{n}_{i=1}$ & $\{x_{1}, x_{2}. . . . . . , x_{n}\}$ \\
$a\left\{{x_{i}} \right\}^{n}_{i=1} ~(a \in \mathbb{R})$ & $\left\{{ax_{i}} \right\}^{n}_{i=1}$ \\
$\left[ n \right]$  & $\left\{i\right\}_{i=1}^{n}$ \\

$x_{(n)}$  & $\left\{{x_{i}} \right\}^{n}_{i=1}$ \\ 
$x_{(n)}+y_{(n)}$ &	$\left\{{x_{i}}+{y_{i}}\right\}^{n}_{i=1}$ \\ 

$ax_{(n)}~(a\in \mathbb{R})$ &	$\left\{{a}{x_{i}}\right\}^{n}_{i=1}$ \\ 
$\frac{x_{(n)}}{a}~(a\in \mathbb{R})$ &	$\left\{{\frac{x_{i}}{a}}\right\}^{n}_{i=1}$ \\ 

$x_{(n)}\times y_{(n)}$ &	$\left\{{x_{i}}{y_{i}}\right\}^{n}_{i=1}$ \\ 
$\frac{x_{(n)}}{ y_{(n)}}$ &	$\left\{{\frac{x_{i}}{y_{i}}}\right\}^{n}_{i=1}$ \\

$L_{\nabla_{x_{j}} H}$  & the Lipschitz constant of the $\nabla_{x_{j}} H(\left\{{x_{i}} \right\}^{N}_{i=1})$.\\ 
$x^{k}_{i}$  & the $i$-th block of $\left\{{x_{i}} \right\}^{n}_{i=1}$ within the $k$-th outer loop\\ 
$h_{j}(x_{j})$ &$H(\{x_{i}^{k+1}\}_{i=1}^{j-1}, x_{j}, \{x_{i}^{k}\}_{i=j+1}^{N}), $ \\
$\mathcal{C}^{p}_{L}(X)$ & the set of functions satisfying the Lipschitz continuous  property \\
$\langle \cdot , \cdot  \rangle$ & inner product \\
$\|\cdot \|_{p}$ & $l_{p}$ norm \\
$\|\left\{{x_{i}}\right\}^{n}_{i=1}\|$ & $ \sum_{i=1}^{n}\|x_{i}\|$\\ 

$\mathrm{dom} ~ J$ & the domain of function $J$ \\ \hline
\end{tabular}
\label{notation}
\end{table}

\subsection{Symbol definition}
We first introduce some definitions and properties \cite{ciarlet2013linear,armstrong2013basic}. 
\begin{definition} 
Let $X$ be a non-empty set, and $(X,d)$ is a topological space.  $d: X \times X \rightarrow \mathbb{R}$ is called a metric function on $X$ if, for all $x, y, z \in X$, the following conditions hold. 

(i) $d(x, y) \geq 0$, with equality if and only if $x = y$.

(ii) $d(x, y) = d(y, x)$ for all $x, y \in X$.

(iii) $d(x, y) + d(y, z) \geq d(x, z)$.

\noindent Then $(X,d)$ is a metric space, we abbreviate it as $X$. 
\end{definition}

\begin{definition}
$B(x,\epsilon)$ is an open ball which is defined as $B(x,\epsilon):=\left\{y: d(x,y)<\epsilon\right\}$. 
\label{openball}
\end{definition}

\begin{definition}
If $A$ is a set, then define the derived set $A'$ as the set of overall cluster points of $A$, $A^{o}$ as the set of overall inter points of $A$, $\overline{A}$ as the closure of $A$, $A^{c}$ as the complement of $A$.
\label{d1}
\end{definition}
\begin{proposition}
Let $A$ be a closed set, then the derived set $A' \subseteq A$. 
\label{dopen}  
\end{proposition}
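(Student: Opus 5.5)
The plan is to argue by contradiction, using only the definition of a closed set (its complement $A^{c}$ is open) together with Definition~\ref{openball} and Definition~\ref{d1}. Let $x \in A'$ be an arbitrary cluster point of $A$. By definition, every open ball $B(x,\epsilon)$ with $\epsilon>0$ contains a point $y \in A$ with $y \neq x$. We want to show $x \in A$.

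Suppose instead that $x \notin A$, so that $x \in A^{c}$. Since $A$ is closed, $A^{c}$ is open. In the metric space $X$ this means every point of $A^{c}$ is an interior point, i.e.\ $A^{c} = (A^{c})^{o}$. Hence there exists $\epsilon_{0}>0$ with $B(x,\epsilon_{0}) \subseteq A^{c}$, and therefore $B(x,\epsilon_{0}) \cap A = \emptyset$.

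This contradicts the cluster-point property, which requires some $y \in A \cap B(x,\epsilon_{0})$ with $y\neq x$. So $x \in A$. Since $x\in A'$ was arbitrary, we conclude $A' \subseteq A$.

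The only point needing care is the convention for ``closed.'' If closedness is taken as $A=\overline{A}$ with $\overline{A}=A\cup A'$, the statement is immediate: $A'\subseteq \overline{A}=A$. If it is taken as ``$A^{c}$ is open,'' the argument above applies, and the one step to justify is that openness in the metric topology yields a ball $B(x,\epsilon_0)$ around each point of $A^{c}$ that is contained in $A^{c}$. This holds by the definition of the metric topology generated by the balls of Definition~\ref{openball}. I do not expect any real obstacle beyond fixing which of these definitions the paper intends.
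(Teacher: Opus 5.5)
Your proof is correct: if $x\in A'$ but $x\notin A$, openness of $A^{c}$ gives a ball $B(x,\epsilon_0)\subseteq A^{c}$, which contradicts the requirement that every punctured ball around $x$ meets $A$. The paper gives no proof of this proposition; it quotes it as a standard fact from the cited topology texts, and your contradiction argument is the standard textbook proof of that fact.
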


\begin{definition}
Let $A$ be a set, if
$(B(x,\epsilon)-\left\{x\right\})\cap A \neq \emptyset ~({\forall}\epsilon>0)$, 
then $x\in A'$. 
\label{dclu}
\end{definition}
\begin{proposition}
A bounded closed set in finite-dimensional space is a compact set. 
\label{dcompact}
\label{p4}
\end{proposition}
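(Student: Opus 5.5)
The statement is Proposition \ref{p4}: a bounded closed set $A$ in a finite-dimensional space $\mathbb{R}^{d}$ (with the Euclidean metric) is compact. I will prove sequential compactness first and then upgrade it to open-cover compactness, since in metric spaces the two are equivalent. The two ingredients are the Bolzano--Weierstrass property of $\mathbb{R}$ and Proposition \ref{dopen} (closed sets contain their derived set).

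\textbf{Step 1: every sequence in $A$ has a subsequence converging to a point of $A$.} Let $(x^{k})\subseteq A$. Since $A$ is bounded, every coordinate sequence $(x^{k}_{1}),\dots,(x^{k}_{d})$ is a bounded real sequence. By Bolzano--Weierstrass on $\mathbb{R}$, which rests on the least upper bound property, I extract a subsequence along which the first coordinate converges. From that subsequence I extract a further one along which the second coordinate converges, and so on. After $d$ extractions all coordinates converge, so the subsequence converges in norm to some $\bar x\in\mathbb{R}^{d}$. If $\bar x$ equals $x^{k}$ for infinitely many $k$, then $\bar x\in A$ directly. Otherwise every ball $B(\bar x,\epsilon)$ contains points of $A$ other than $\bar x$, so $\bar x\in A'$ by Definition \ref{dclu}, and Proposition \ref{dopen} gives $\bar x\in A$.

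\textbf{Step 2: sequential compactness implies compactness.} I expect this to be the main obstacle. Let $\{U_\alpha\}$ be an open cover of $A$.

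First, I show there is a Lebesgue number $\delta>0$: every ball $B(x,\delta)$ with $x\in A$ lies in a single $U_\alpha$. Suppose not. Then for each $k$ there is $x^{k}\in A$ such that $B(x^{k},1/k)$ lies in no $U_\alpha$. By Step 1, a subsequence converges to some $\bar x\in A$. Since the $U_\alpha$ cover $A$, there are $\alpha$ and $r>0$ with $B(\bar x,r)\subseteq U_\alpha$. For large $k$ in the subsequence we have $d(x^{k},\bar x)<r/2$ and $1/k<r/2$, so by the triangle inequality $B(x^{k},1/k)\subseteq B(\bar x,r)\subseteq U_\alpha$. This is a contradiction.

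Second, I show $A$ is totally bounded: finitely many balls $B(y_{i},\delta)$ with $y_{i}\in A$ cover $A$. If not, I can inductively choose points of $A$ that are pairwise at distance at least $\delta$. Such a sequence has no convergent subsequence, which contradicts Step 1.

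Finally, each $B(y_{i},\delta)$ lies in some $U_{\alpha_i}$, so $\{U_{\alpha_i}\}$ is a finite subcover.

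An alternative route is to enclose $A$ in a cube, prove the cube compact by repeated bisection, and note that a closed subset of a compact set is compact. I prefer the sequential route because it uses Proposition \ref{dopen} and Definition \ref{dclu} directly.
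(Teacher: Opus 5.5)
The paper gives no proof of Proposition~\ref{dcompact}. It is the Heine--Borel theorem, quoted among the preliminaries from the cited textbooks and invoked once, at the end of Theorem~\ref{t3}, to turn ``bounded and closed'' into ``compact''. Your proposal is a correct and complete proof. Step~1 (coordinatewise Bolzano--Weierstrass, then Definition~\ref{dclu} and Proposition~\ref{dopen} to place the limit in $A$) gives sequential compactness. Step~2 obtains the Lebesgue number and total boundedness by contradiction from Step~1; the inclusion $B(x^{k},1/k)\subseteq B(\bar x,r)$ and the inductive choice of $\delta$-separated points are both sound.

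The textbook route, which you mention as an alternative, shows a cube is compact by bisection or a supremum argument and then uses that closed subsets of compact sets are compact. Your route instead pays for the metric-space equivalence of sequential and open-cover compactness. In exchange it produces both forms explicitly, and the paper's analysis relies on both. Sequential compactness is what makes the set of cluster points of the bounded iterates nonempty, which Section~\ref{sec4} uses tacitly. The finite-subcover form is what the uniformized K\L{} property (Proposition~\ref{def_uniKL}) needs.

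Two minor remarks. In Step~1 it suffices that either $\bar x=x^{k}$ for some $k$ (so $\bar x\in A$) or $\bar x\neq x^{k_j}$ for all $j$, so the ``infinitely many'' case split is more than necessary. Also, you fix the Euclidean metric, whereas the paper measures block vectors by $\|\{x_i\}_{i=1}^{N}\|=\sum_i\|x_i\|$. Since $\|x\|_2\le\sum_i\|x_i\|\le\sqrt{N}\,\|x\|_2$, the two norms have the same bounded sets and convergent sequences, so Step~1 carries over verbatim and Step~2 is purely metric.
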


\begin{proposition}
In metric space, if 
\begin{center}
 ${\forall}x_{n}, ~x _{0} \in \mathbb{R}^{n}, ~ \lim_{n \rightarrow \infty} x_{n}=x_{0}$,   
\end{center}
then we have 
\begin{center}
$\lim_{x\rightarrow x_{0}}f(x)=f(x_{0}) \Leftrightarrow \lim_{n \rightarrow \infty}f(x_{n})=f(x_{0})$. 
\end{center}
\label{tf4}
\end{proposition}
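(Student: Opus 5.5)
We read Proposition \ref{tf4} in its standard form: for a function $f$ defined on a metric space (here $\mathbb{R}^{n}$ with the Euclidean metric $d$) and a point $x_{0}$, we have $\lim_{x\rightarrow x_{0}}f(x)=f(x_{0})$ if and only if $\lim_{n\rightarrow\infty}f(x_{n})=f(x_{0})$ for \emph{every} sequence $\{x_{n}\}$ with $\lim_{n\rightarrow\infty}x_{n}=x_{0}$. The plan is to prove the two implications separately. We work directly with the $\epsilon$--$\delta$ definition of the limit, phrased through the open balls $B(x,\epsilon)$ of Definition \ref{openball}.

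\emph{Forward direction.} Assume $\lim_{x\rightarrow x_{0}}f(x)=f(x_{0})$ and fix $\epsilon>0$.
\begin{itemize}
\item By continuity there is $\delta>0$ such that $f(B(x_{0},\delta))\subseteq B(f(x_{0}),\epsilon)$.
\item Let $\{x_{n}\}$ be any sequence with $x_{n}\rightarrow x_{0}$. Convergence gives an index $N_{0}$ such that $x_{n}\in B(x_{0},\delta)$ for all $n\geq N_{0}$.
\item Hence $d(f(x_{n}),f(x_{0}))<\epsilon$ for all $n\geq N_{0}$, which is exactly $f(x_{n})\rightarrow f(x_{0})$.
\end{itemize}
This direction is routine.

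\emph{Reverse direction.} We argue by contraposition. Suppose $\lim_{x\rightarrow x_{0}}f(x)\neq f(x_{0})$.
\begin{itemize}
\item Then there is $\epsilon_{0}>0$ such that every ball $B(x_{0},\delta)$ contains a point $x$ with $d(f(x),f(x_{0}))\geq\epsilon_{0}$.
\item Take $\delta=1/n$ for $n=1,2,\dots$ and choose, for each $n$, a point $x_{n}\in B(x_{0},1/n)$ with $d(f(x_{n}),f(x_{0}))\geq\epsilon_{0}$. This uses countable choice, which is harmless here.
\item Since $d(x_{n},x_{0})<1/n\rightarrow 0$, the sequence satisfies $x_{n}\rightarrow x_{0}$.
\item However, $f(x_{n})$ stays at distance at least $\epsilon_{0}$ from $f(x_{0})$, so $f(x_{n})\not\rightarrow f(x_{0})$.
\end{itemize}
This contradicts the sequential hypothesis, which proves the reverse implication.

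The only delicate point is the reverse direction. First, the hypothesis must be understood as quantified over all sequences converging to $x_{0}$; the statement as written leaves this implicit. Second, the construction of the bad sequence must use balls whose radii shrink to zero, so that the chosen points genuinely converge to $x_{0}$. Everything else is a direct unwinding of the definitions. The argument uses only the metric axioms, in particular the definition of convergence through $d$, and never uses finite dimensionality.
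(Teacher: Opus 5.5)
The paper gives no proof of Proposition~\ref{tf4}. It is quoted as a standard preliminary from the cited analysis texts, so there is no in-paper argument to compare yours against.

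Your proof is the standard textbook (Heine) argument, and it is correct. The forward direction is a direct $\epsilon$--$\delta$ unwinding. The contrapositive construction with $x_{n}\in B(x_{0},1/n)$ is exactly what the converse needs.

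You are right that the hypothesis must be read as quantified over \emph{all} sequences converging to $x_{0}$. For a single fixed sequence the implication ``$\Leftarrow$'' is false. This reading is harmless for the paper, which only uses the ``$\Rightarrow$'' direction: in Theorem~\ref{t5}(i) it passes from $x^{k_j}_{(N)}\to\overline{x}$ to $H(x^{k_j}_{(N)})\to H(\overline{x})$ using continuity of $H$.

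One cosmetic point concerns the convention for $\lim_{x\to x_{0}}$. If the limit is taken over punctured neighbourhoods, your inclusion should read $f(B(x_{0},\delta)\setminus\{x_{0}\})\subseteq B(f(x_{0}),\epsilon)$. Nothing else changes, since any terms with $x_{n}=x_{0}$ satisfy the bound trivially.
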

\begin{definition}
For $S \subseteq \mathbb{R}^{n}$, we define the distance between the set $S$ and the point $x$ as $\operatorname{dist}(x, S): =\operatorname*{inf}\,\left\{\|x-y\|: \,y\in S\right\}. $
\end{definition}

\subsection{Notation and preliminaries for nonconvex analysis}
Next, we introduce some preliminaries for nonconvex analysis \cite{bolte2014proximal,ciarlet2013linear}.  
\begin{definition}
Proper function: a function $g: \mathbb{R}^{n} \rightarrow (-\infty,+\infty]$ is said to be proper if $\mathrm{dom}$ $g \neq \emptyset$, where $\mathrm{dom} ~g =\left\{x \in \mathbb{R}:~ g(x)<\infty \right\}$. 
\label{dproper}
\end{definition}
\begin{definition}
Lower semicontinuous function: if  ${\forall} x_{k} \in \mathrm{dom}~ f$, we have
\begin{center}
$\lim_{k\rightarrow \infty} x_{k}=x, ~ f(x) \leq \operatorname*{lim}\operatorname*{inf}_{k\to\infty}f(x_{k}) ~$,  
\end{center}
then $f$ is called lower semicontinuous at $\mathrm{dom}~ f$. 
\label{lower}
\end{definition}
\begin{definition}
Coercive function: if $f$ is coercive, then $\left\{x | x \in R^{n}, ~ f(x) < a, ~{\forall} a \in \mathbb{R} \right\}$ is bounded and $\inf_{x} f(x) > -\infty$.
\label{dlower}
\end{definition}
\begin{definition}
Let $f$ be a proper lower semicontinuous function, The Fréchet subdifferential of $f$ at $x$, written ${\hat{\partial}} f(x)$, is the set of all vectors u which satisfy  
\begin{center}
$\operatorname*{lim}_{y\neq x,y\to x}\cdot\frac{f(y)-f(x)-\langle u,\ y-x\rangle}{\|y-x\|}\ge0, $
\end{center}
when $ x \notin \mathrm{dom}~ f$, then set ${\hat{\partial}} f(x)=\emptyset$.
\label{d2}
\label{dsubdiff}
\end{definition}
\begin{definition}
The limiting subdifferential  $\partial f(x): =\{u\in\mathbb{R}^{n}: \exists x^{k}\to x,f(x^{k})\to f(x),u^{k}\to u,u^{k}\in\widehat{\partial}f(x^{k})\}.$
\end{definition}
\begin{proposition}
Let $f$ be a proper lower semicontinuous function. If $f$ has a local minimum at $x^{*}$, then $0 \in \partial f(x^{*})$. 
\label{p1}
\end{proposition}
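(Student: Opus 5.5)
The plan is to derive Proposition \ref{p1} (Fermat's rule) from the definition of the Fréchet subdifferential (Definition \ref{dsubdiff}), using the fact that $\hat{\partial} f(x)\subseteq \partial f(x)$. Concretely, I would show $0\in\hat{\partial} f(x^{*})$ and then pass to the limiting subdifferential with the constant sequences $x^{k}=x^{*}$, $u^{k}=0$.

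First, since $x^{*}$ is a local minimum of the proper function $f$, it lies in $\mathrm{dom}\, f$. Otherwise $f(x^{*})=+\infty$, and $f$ would equal $+\infty$ on a neighbourhood of $x^{*}$; I will take the standard convention that a local minimizer is a point with $f(x^{*})<\infty$. By the definition of a local minimum, there is $\epsilon>0$ with $f(y)\ge f(x^{*})$ for all $y\in B(x^{*},\epsilon)$ (Definition \ref{openball}). Hence, for $y\neq x^{*}$ in that ball,
\begin{align*}
\frac{f(y)-f(x^{*})-\langle 0,\, y-x^{*}\rangle}{\|y-x^{*}\|}=\frac{f(y)-f(x^{*})}{\|y-x^{*}\|}\ge 0 .
\end{align*}
Taking the $\liminf$ as $y\to x^{*}$, $y\neq x^{*}$, gives a value $\ge 0$. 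This is exactly the requirement in Definition \ref{dsubdiff} with $u=0$, so $0\in\hat{\partial} f(x^{*})$.

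Second, to conclude $0\in\partial f(x^{*})$, I take $x^{k}\equiv x^{*}$ and $u^{k}\equiv 0$. Then $x^{k}\to x^{*}$, $f(x^{k})=f(x^{*})\to f(x^{*})$ (finite, because $x^{*}\in\mathrm{dom}\, f$), $u^{k}\to 0$, and $u^{k}\in\hat{\partial} f(x^{k})$ by the first step. This matches the definition of the limiting subdifferential verbatim.

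There is no genuine obstacle here. The only delicate point is interpreting the limit in Definition \ref{dsubdiff} as a $\liminf$, which is the standard reading; this is precisely what allows the nonnegative quotient near $x^{*}$ to yield the inequality even when the plain limit does not exist. Lower semicontinuity is not actually needed beyond making the subdifferentials well defined.
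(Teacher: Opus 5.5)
Your proof is correct, and it is the standard proof of this generalized Fermat rule: you get $0\in\hat{\partial} f(x^{*})$ directly from the $\liminf$ reading of the Fréchet subdifferential, then pass to $\partial f(x^{*})$ using the constant sequences $x^{k}\equiv x^{*}$, $u^{k}\equiv 0$. The paper gives no proof of its own and simply cites the result as a known preliminary, so your route does not differ from it; your remarks that $f(x^{*})<\infty$ must be built into the notion of local minimizer and that lower semicontinuity plays no role in the argument are both accurate.
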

\begin{proposition} 
Let $f$ be a proper lower semicontinuous function, and $g$ be a continuously differential function. Then $\forall x \in \mathrm{dom} ~f$, $\partial(f+g)(x)$ = $\partial f (x) +\nabla g(x)$. 
\label{p2}
\end{proposition}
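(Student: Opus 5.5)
The plan is to work directly from the definitions of the Fréchet and limiting subdifferentials. First I will establish the Fréchet-level identity $\hat{\partial}(f+g)(x)=\hat{\partial}f(x)+\nabla g(x)$ for every $x\in\mathrm{dom}\,f$. Then I will pass to limits. Note that $\mathrm{dom}(f+g)=\mathrm{dom}\,f$, since $g$ is finite everywhere.

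\textbf{Fréchet level.} Fix $x\in\mathrm{dom}\,f$. Since $g$ is continuously differentiable, for $y\to x$ we have
\begin{align*}
g(y)-g(x)-\langle\nabla g(x),y-x\rangle=o(\|y-x\|).
\end{align*}
For any vector $u$ this gives
\begin{align*}
&\frac{(f+g)(y)-(f+g)(x)-\langle u,y-x\rangle}{\|y-x\|}\\
&=\frac{f(y)-f(x)-\langle u-\nabla g(x),y-x\rangle}{\|y-x\|}+o(1).
\end{align*}
Taking $\liminf_{y\to x,\,y\neq x}$ on both sides (reading the limit in Definition \ref{dsubdiff} as a $\liminf$), the two lower limits coincide. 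Hence $u\in\hat{\partial}(f+g)(x)$ if and only if $u-\nabla g(x)\in\hat{\partial}f(x)$, which is the Fréchet identity.

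\textbf{Limiting level, inclusion $\supseteq$.} Take $v\in\partial f(x)$. Then there exist $x^k\to x$ with $f(x^k)\to f(x)$ and $v^k\in\hat{\partial}f(x^k)$ with $v^k\to v$. By the Fréchet identity, $v^k+\nabla g(x^k)\in\hat{\partial}(f+g)(x^k)$. By continuity of $g$ and $\nabla g$,
\begin{align*}
(f+g)(x^k)\to(f+g)(x),\qquad v^k+\nabla g(x^k)\to v+\nabla g(x).
\end{align*}
Therefore $v+\nabla g(x)\in\partial(f+g)(x)$.

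\textbf{Limiting level, inclusion $\subseteq$.} Take $w\in\partial(f+g)(x)$, with approximating $x^k\to x$, $(f+g)(x^k)\to(f+g)(x)$ and $w^k\in\hat{\partial}(f+g)(x^k)$ with $w^k\to w$. Continuity of $g$ yields $f(x^k)\to f(x)$. The Fréchet identity gives $w^k-\nabla g(x^k)\in\hat{\partial}f(x^k)$, and $w^k-\nabla g(x^k)\to w-\nabla g(x)$. So $w-\nabla g(x)\in\partial f(x)$.

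\textbf{Main obstacle.} The only delicate step is transferring the convergence of function values between $f$ and $f+g$ along the approximating sequences. This is exactly where continuity of $g$ (not just differentiability at $x$) is needed, and continuity of $\nabla g$ is needed to move the gradient along $x^k$. Both are supplied by the assumption that $g$ is $C^1$. No lower semicontinuity argument beyond the definitions is required.
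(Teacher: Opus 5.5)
Your proof is correct. The paper gives no argument of its own for this proposition. It is stated without proof among the preliminaries taken from the nonconvex-analysis literature, so there is no competing route to compare against. What you have written is the standard textbook proof: first the Fr\'echet-level identity $\hat{\partial}(f+g)(x)=\hat{\partial}f(x)+\nabla g(x)$, obtained from the first-order expansion of $g$ at $x$, then passage to the limit along the sequences that define $\partial$. Compared with the bare citation, your version is self-contained. It also makes visible that properness and lower semicontinuity of $f$ are never used, and that $g$ need only be $C^1$ on a neighborhood of $x$. Your reading of the $\lim$ in the paper's definition of $\hat{\partial}$ as a $\liminf$ is the intended one. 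In any case nothing hinges on it, since adding an $o(1)$ term changes neither a lower limit nor the existence or value of a limit.

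There are two small precision points. First, when you invoke the Fr\'echet identity at $x^k$, you need $x^k\in\mathrm{dom}\,f$. This is automatic: by the convention $\hat{\partial}f(x)=\emptyset$ off the domain, $v^k\in\hat{\partial}f(x^k)$ (resp.\ $w^k\in\hat{\partial}(f+g)(x^k)$) forces it. Second, your closing remark slightly misplaces where the $C^1$ hypothesis enters. Converting $f(x^k)\to f(x)$ into $(f+g)(x^k)\to(f+g)(x)$ and back needs only continuity of $g$ at $x$, which differentiability at $x$ already supplies. What genuinely goes beyond differentiability at $x$ is:
\begin{itemize}
\item differentiability of $g$ at the points $x^k$, to apply the Fr\'echet identity there;
\item continuity of $\nabla g$ at $x$, to get $\nabla g(x^k)\to\nabla g(x)$.
\end{itemize}
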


\begin{definition}
Set\ $f$: $X\rightarrow \mathbb{R}, ~ X \subseteq \mathbb{R}^{n}$, if $f\in \mathcal{C}_{L_i}^P(X)$, then $\forall x_i \in X_i$ ($X_i$ is the $x_i$-section of $X$, $f_i$ is the $x_i$-section of $f$), $\exists U \subset X_i$ is the neighborhood of $x_i$ and $\exists L_{\nabla_{x_{i}} f}>0$, $\forall y_i,z_i \in U$, s.t. 
   \begin{equation}
  \|\nabla_{x_{i}} f_i(z_i)-\nabla_{x_{i}} f_i(y_i)\| \leq L_{\nabla_{x_{i}} f}\|z_i-y_i\|, \notag   
   \end{equation}
where $L_{\nabla_{x_{i}} f}$ is the block-wise local Lipschitz constant of $\nabla_{x_{i}} f$. $\mathcal{C}_{L_i}^P(X)$ is also known as a set of functions satisfying the block-wise local gradient Lipschitz continuity. 
   \label{dlipchitz}
   \label{d3}
\end{definition}

\begin{proposition}
Set\ $f$: $\mathbb{R}^{n}\rightarrow \mathbb{R}$, if  $f \in\mathcal{C}_{L_i}^P(\mathbb{R}^{n})$, and $U \subset X_i$ is the neighborhood of $x_i$, then $\forall y_i\in U$, s.t. 
 $f_i(y_i) \leq f_i(x_i)+\langle \nabla f_i(x_i),y_i-x_i \rangle+\frac{L_{\nabla_{x_{i}} f}}{2}\|y_i-x_i\|^{2}.$
\label{p3}
\end{proposition}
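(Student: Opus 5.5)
The statement is the block-wise descent lemma, Proposition \ref{p3}. I will prove it by the fundamental theorem of calculus applied along the segment from $x_i$ to $y_i$ inside the $x_i$-section, followed by the local Lipschitz bound of Definition \ref{dlipchitz}.

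\textbf{Step 1 (reduction).} Fix all blocks other than the $i$-th. Write $f_i$ for the resulting section, which is continuously differentiable in $x_i$. Let $U$ be the neighborhood from Definition \ref{dlipchitz}. Shrinking $U$ if necessary, I take it to be an open ball $B(x_i,\epsilon)$, which is convex. Then for every $y_i\in U$ the segment $x_i+t(y_i-x_i)$, $t\in[0,1]$, stays in $U$. This convexity is the only delicate point. The definition merely asserts the existence of some neighborhood, and the mean-value argument needs the whole segment inside the region where the Lipschitz estimate holds. So I will explicitly replace $U$ by a ball contained in it, and read the proposition's $U$ as that ball.

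\textbf{Step 2 (integral identity).} Define $\varphi(t)=f_i(x_i+t(y_i-x_i))$. By the chain rule, $\varphi'(t)=\langle \nabla f_i(x_i+t(y_i-x_i)),\,y_i-x_i\rangle$, and this is continuous. Hence
\begin{align*}
f_i(y_i)-f_i(x_i)-\langle \nabla f_i(x_i),y_i-x_i\rangle
=\int_0^1\langle \nabla f_i(x_i+t(y_i-x_i))-\nabla f_i(x_i),\,y_i-x_i\rangle\,dt .
\end{align*}

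\textbf{Step 3 (bound).} Apply Cauchy--Schwarz to the integrand, then the Lipschitz estimate with $z_i=x_i+t(y_i-x_i)$ and base point $x_i$; both points lie in $U$ by Step 1. This bounds the integrand by $L_{\nabla_{x_{i}} f}\,t\,\|y_i-x_i\|^2$. Since $\int_0^1 t\,dt=\tfrac12$, the right-hand side is at most $\tfrac{L_{\nabla_{x_{i}} f}}{2}\|y_i-x_i\|^2$. Rearranging gives the claimed inequality.
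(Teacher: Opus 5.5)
The paper gives no proof of this proposition; it quotes it as the standard (block-wise, local) descent lemma from the cited references, and your argument (fundamental theorem of calculus along the segment, then Cauchy--Schwarz and the Lipschitz bound) is exactly that standard proof, so it is correct. Your replacement of $U$ by a ball (any neighborhood star-shaped about $x_i$ would do) is necessary, not cosmetic: for a non-convex $U$ the inequality as literally stated fails, e.g.\ when $U$ is two disjoint intervals on which $\nabla f_i$ vanishes while $f_i$ rises steeply in between.
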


\begin{definition}
Desingularization function: we denote the desingularization function $\phi: [0,\eta)\rightarrow \mathbb{R}_{+}$ which satisfies the following conditions. 

(i) $\phi(0)=0$.

(ii) $\phi$ is continuously differentiable on $[0,\eta)$, and $\phi$ is a concave function from in $(0,\eta)$.

(iii)  $\phi (s) '>0 ~ (\forall s \in (0,\eta))$. 
\label{des}
\end{definition}

The Kurdyka-Łojasiewicz (KŁ) property \cite{attouch2013convergence,xu2013block}, described below, serves as a key tool for global convergence analysis and establishing convergence rate.  
\begin{definition}
Let $f$ be a proper lower semicontinuous function, $f$ is said to have the KŁ property on $\bar{u}\in d o m(\partial f) $, if there exists $\eta\in\left(0, +\infty\right]$, and $U$ is the neighborhood of $\bar{u}$, any $u$ in $U$ that satisfies the condition $f({\bar{u}}) < f(u) < f({\bar{u}})+\eta$ has the following inequalities established.  
\begin{center}
$\phi^{\prime}(f(u)-f(\bar{u}))d i s t(0,\partial f(u))> 1$,
\end{center}
where $\phi$ is the desingularization function. If $f$ has the KŁ property at each point of $\mathrm{dom}$ $\partial f$, then $f$ is a  KŁ function. 
\label{def_KL}
\end{definition}
\begin{proposition}
(Uniformized KŁ property) Let $\Omega$ be a compact set and $f$:  $\mathbb{R}^{n} \rightarrow \mathbb{R} \cup \left\{\infty\right\}$ be a proper and lower semicontinuous function. Assume that $f$ is constant on $\Omega$ and is a KŁ function. Then, there exist $\epsilon>0,~
\eta>0$ such that ${\forall}x \in \Omega$ and all $x$ in the intersection $\left\{x \in \mathbb{R}^{n} : dist(x,\Omega) < \epsilon \right\} \cap \left\{f (\bar{x})< f (x)< f (\bar{x}) + \eta \right\}$ one has 
\begin{center}
$\phi^{\prime}(f(x)-f(\bar{x}))d i s t(0,\partial f(x))> 1$.       
\end{center}
\label{def_uniKL}
\end{proposition}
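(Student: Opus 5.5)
The plan is the standard compactness-plus-Lebesgue-number argument. First I would fix the constant value $\bar f := f(\bar{x})$ of $f$ on $\Omega$; it is the same for every $\bar{x}\in\Omega$, so the inequality to be proved reads $\phi'(f(x)-\bar f)\,\mathrm{dist}(0,\partial f(x))>1$. Since $f$ is a KŁ function, every $\bar{u}\in\Omega$ has KŁ data as in Definition \ref{def_KL}. I would shrink the neighborhood given there to an open ball, so each $\bar u\in\Omega$ comes with a radius $\epsilon_{\bar u}>0$, a level $\eta_{\bar u}>0$ and a desingularization function $\phi_{\bar u}$ (Definition \ref{des}) on $[0,\eta_{\bar u})$. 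These satisfy $\phi_{\bar u}'(f(u)-\bar f)\,\mathrm{dist}(0,\partial f(u))>1$ whenever $u\in B(\bar u,\epsilon_{\bar u})$ and $\bar f<f(u)<\bar f+\eta_{\bar u}$.

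Next I would use compactness. The balls $B(\bar u,\epsilon_{\bar u}/2)$, $\bar u\in\Omega$, form an open cover of $\Omega$. By compactness (Proposition \ref{dcompact} applies if $\Omega$ is viewed as a bounded closed subset of $\mathbb{R}^n$), finitely many of them, centred at $\bar u_1,\dots,\bar u_p$, already cover $\Omega$. I would then set
\begin{align}
\epsilon:=\min_{1\le i\le p}\tfrac{\epsilon_{\bar u_i}}{2},\qquad \eta:=\min_{1\le i\le p}\eta_{\bar u_i},\qquad \phi:=\sum_{i=1}^{p}\phi_{\bar u_i}\ \text{on }[0,\eta).\notag
\end{align}
Take any $x$ with $\mathrm{dist}(x,\Omega)<\epsilon$. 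Then there is some $y\in\Omega$ with $\|x-y\|<\epsilon$, and $y$ lies in some $B(\bar u_i,\epsilon_{\bar u_i}/2)$. The triangle inequality gives $\|x-\bar u_i\|<\epsilon_{\bar u_i}$, so $x$ lies in the $i$-th KŁ neighborhood. If in addition $\bar f<f(x)<\bar f+\eta\le \bar f+\eta_{\bar u_i}$, the local KŁ inequality at $\bar u_i$ applies.

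Finally I would check that $\phi$ is an admissible desingularization function and that it dominates. A finite sum of functions satisfying (i)–(iii) of Definition \ref{des} again satisfies them: it vanishes at $0$, it is $C^1$ and concave, and its derivative is positive. Positivity of every $\phi_{\bar u_j}'$ gives $\phi'(s)\ge\phi_{\bar u_i}'(s)$ for each $i$. Hence $\phi'(f(x)-\bar f)\,\mathrm{dist}(0,\partial f(x))\ge \phi_{\bar u_i}'(f(x)-\bar f)\,\mathrm{dist}(0,\partial f(x))>1$. Note that $\mathrm{dist}(0,\partial f(x))$ is strictly positive here (it may be $+\infty$), so multiplying by it preserves the inequality.

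The main obstacle is the non-uniformity of the desingularization functions, since different points of $\Omega$ come with different $\phi$'s. The summation trick above is the step that needs care, and the restriction to the common interval $[0,\eta)$ is what makes it work. A secondary point is the Lebesgue-number step: halving the radii is what guarantees that the neighborhood $\{\mathrm{dist}(\cdot,\Omega)<\epsilon\}$ lies inside the union of the original KŁ neighborhoods.
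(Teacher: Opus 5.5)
Your argument is correct and is the standard proof of this lemma: a finite subcover of $\Omega$ by K\L{} balls, $\eta$ taken as the minimum of the local levels, and $\phi$ taken as the sum of the local desingularizing functions, which is the argument in \cite{bolte2014proximal}, the source the paper quotes this result from without proving it; halving the radii simply makes explicit the choice of $\epsilon$ so small that $\{x:\mathrm{dist}(x,\Omega)<\epsilon\}$ lies in the union of the original balls. One point needs tightening: under Definition \ref{def_KL}, being a K\L{} function only supplies K\L{} data at points of $\mathrm{dom}\,\partial f$, so either read the hypothesis as ``K\L{} at every point of $\Omega$'' (as in the source, and as holds in the paper's application where $\Omega\subseteq\mathrm{crit}\,J$), or note that at any non-critical $\bar u\in\mathrm{dom}\,f$ the inequality holds trivially with a linear $\phi$, because closedness of the limiting subdifferential under $f$-attentive convergence gives $\mathrm{dist}(0,\partial f(u))\ge c>0$ for $u$ near $\bar u$ on the relevant level strip.
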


\vspace{2pt}

\section{The proposed IBPL$^+$ and IBPL$^+$-TP methods}
\label{sec3}
In this section, we present the technical details of our proposed IBPL$^+$ and IBPL$^+$-TP methods. 

First, we make the following standard assumptions about Eq. (\ref{e11}) throughout this paper.

\begin{assumption}
(i) Define $\mathbb{D}_{H} \subseteq \prod_{i=1}^N \mathbb{R}^{d_{i}}$, $H: \mathbb{D}_{H} \rightarrow\mathbb{R} $, $H \in \mathcal{C}_{L_i}^P(\mathbb{D}_{H})$ (see Definition \ref{dlipchitz}). 

(ii) Define $\mathbb{D}_{F_{i}} \subseteq \mathbb{R}^{d_{i}}$, $F_{i}: \mathbb{D}_{F_{i}} \rightarrow \mathbb{R}$ is a lower semicontinuous function (see Definition \ref{dlower}). 

(iii) $J$ is a Kurdyka-Łojasiewicz (KŁ) function (see Definition \ref{def_KL}), and $J$ is also a proper coercive function (see Definition \ref{dproper}) with a lower bound. 
\label{assump1}
\end{assumption}

Then, let $S=\mathrm{dom} ~ J$,  for Eq. (\ref{e11}), we define the proximal operator with two different extrapolation points in our proposed method as follows.  
\begin{align}
\hspace{-3.1mm} &&x^{k+1}_{i} &\in prox_{\sigma^{k}_{i} F_{i}} (z^{k}_{i}-\sigma^{k}_{i} \nabla_{x_{i}}  H(\{x^{k+1}_{j}\}_{j=1}^{i-1},y^k_{i},\{x^{k}_{j}\}_{j=i+1}^{N})) \notag \\
\hspace{-3.1mm} && &= \operatorname*{\arg\min} \limits_{x \in S} \Big[F_{i}(x)+\frac{1}{2\sigma^{k}_{i}}\|x -z^{k}_{i}\|^{2} \notag \\
\hspace{-3.1mm} && &+\langle \nabla_{x_{i}} H(\{x^{k+1}_{j}\}_{j=1}^{i-1},y^k_{i},\{x^{k}_{j}\}_{j=i+1}^{N}), x-z^{k}_{i} \rangle \Big].  \label{iprox} 
\end{align}
Next, we first present our method with adaptive momentum: IBPL$^+$ (Algorithm \ref{IBPL$^+$}), which incorporates an intuitive and practical adaptive momentum strategy to update the extrapolation parameters, as follows. 

\begin{footnotesize}
 \begin{algorithm}[!h]
\caption{IBPL$^+$: Inertial block proximal linearized method with adaptive momentum}\label{IBPL$^+$}
    \SetAlgoLined
     \LinesNumbered
        \KwIn{$\{{x^{1}_{i}} \}^{N}_{i=1}=\{{x^{0}_{i}} \}^{N}_{i=1} \in \mathrm{dom} \ J$, $\rho_{1}>0$, 
        $t_{1}, t_{2}\in[1, \infty)$, $\alpha_{max},\beta_{max}\in[0,1)$, $\alpha^{1}=\{\alpha_{i}^{1}\in [0, 1)\}^{N}_{i=1}$, $ \beta^{1}=\{\beta_{i}^{1}\in [0, 1)\}^{N}_{i=1}$}
        
        
        \For {$k = 1,2,3,\cdots$ }{
        \hspace{-1.1mm}$\{{z^{k}_{i}} \}^{N}_{i=1}$=$\{{x^{k}_{i}} \}^{N}_{i=1}$+$\alpha^{k}\times (\{{x^{k}_{i}} \}^{N}_{i=1}-\{{x^{k-1}_{i}} \}^{N}_{i=1})$, \\

        \hspace{-1.1mm}$\{{y^{k}_{i}} \}^{N}_{i=1}$=$\{{x^{k}_{i}} \}^{N}_{i=1}$+$\beta^{k}\times (\{{x^{k}_{i}} \}^{N}_{i=1}-\{{x^{k-1}_{i}} \}^{N}_{i=1})$. \\

        \For {$i = 1$ to N}{

        \hspace{-1.5mm}$\gamma^{k}_{i}\in (1,\infty), ~\sigma_{i}^{k} =\frac{1}{\gamma^{k}_{i} (L_{\nabla_{x^{k}_{i}} H})}$, 
         
        \hspace{-1.5mm} $h_{i}(y^{k}_{i})=H(\{x^{k+1}_{j}\}_{j=1}^{i-1},y^k_{i},\{x^{k}_{j}\}_{j=i+1}^{N})$, 
         
        \hspace{-1.5mm} $x^{k+1}_{i} \in prox_{\sigma_{i}^{k} F_{i}} (z^{k}_{i}-\sigma_{i}^{k} \nabla_{x_{i}} h_{i}(y^{k}_{i})).$

        }

       \eIf{\begin{equation}\hspace{-2.6mm} J(x^{k+1}_{(N)}) \leq J(x^{k}_{(N)})-\rho_{1}(\|x^{k+1}_{(N)}-y^{k}_{(N)}\|^2+\|x^{k+1}_{(N)}-z^{k}_{(N)}\|^2)\notag\end{equation}} 
        {\hspace{-2.1mm} $\alpha^{k+1}=\min(t_{1}\alpha^{k},\alpha_{max})$, 
        
        \hspace{-1.5mm}$\beta^{k+1}=\min(t_{2}\beta^{k},\beta_{max})$. }
        {\hspace{-2.1mm} $z^{k}_{(N)}=y^{k}_{(N)}=x^{k}_{(N)}$, $\alpha^{k+1}=\frac{\alpha^{k}}{t_{1}},~\beta^{k+1}=\frac{\beta^{k}}{t_{2}}$, 
        
        \hspace{-2.1mm} and re-update $x^{k+1}_{(N)}$ through step 4 to 8.

        }

        }
        \textbf{Return} $\left\{{x^{k+1}_{i}} \right\}^{N}_{i=1}$.
\end{algorithm}
\end{footnotesize}

\begin{remark}
(\romannumeral1) Since the convergence proof of our method only requires $\alpha^{k}<1$ and $\beta^{k}<1$, IBPL$^+$ still converges even if the extrapolation parameters are updated via other strategies.  

(\romannumeral2) When the extrapolation parameters are not adaptively updated, we refer to IBPL$^+$ as IBPL. 

(\romannumeral3) When $\alpha_{i}^{k}\equiv\beta_{i}^{k}\equiv 0$, IBPL$^+$ degenerates into PALM. 
\end{remark}

\begin{table*}[!h]
\centering
\renewcommand\arraystretch{3}
\belowrulesep=0pt\aboverulesep=0pt
\caption{The extrapolation points used in the methods and the constraint relationships between extrapolation parameters and other parameters in the methods. To maintain notational consistency with Algorithm \ref{IBPL$^+$} and Algorithm \ref{IBPLtp}, the extrapolation parameters of the extrapolation points in the methods using two different extrapolation points are also denoted as $\alpha^k_{i}$ and $\beta^k_{i}$, respectively, and the extrapolation parameter of the extrapolation point in the methods using a single extrapolation point is denoted as $\beta^k_{i}$. } 
\centering
\begin{tabular*}{0.85\linewidth}{c|c|c} 
\toprule     
Method & Extrapolation points   & Constraint relationships     \\ 
\midrule   
iPALM \cite{pock2016inertial}&      $z^{k}_{i}$ and $y^{k}_{i}$     & $\frac{1}{\sigma^{k}_{i}}=\frac{1+2\beta_{i}^{k}}{1-2\alpha_{i}^{k}} L_{\nabla_{x^{k}_{i}} H}$ \\ \hline

IBPG \cite{le2020inertial} &   $z^{k}_{i}$ and $y^{k}_{i}$   & $\frac{v(1-v)(\gamma_{i}^{k}-1)^2 L_{\nabla_{x^{k-1}_{i}} H}}{(\beta_{i}^{k}+\gamma_{i}^{k}\alpha_{i}^{k})^2L_{\nabla_{x^{k}_{i}} H}}>1$, where $v \in (0,1),\gamma_{i}^{k}>1$   \\ \hline

BPL \cite{xu2017globally}&   $y^{k}_{i}$  &$\beta_{i}^{k} < \frac{\gamma_{i}^{k}-1}{2(\gamma_{i}^{k}+1)}\sqrt{\frac{L_{\nabla_{x^{k-1}_{i}} H}}{L_{\nabla_{x^{k}_{i}} H}}}$ \\ \hline

TITAN \cite{phan2023inertial} &    $y^{k}_{i}$   & $\beta_{i}^{k} < \frac{\gamma_{i}^{k}-1}{2\gamma_{i}^{k}}\sqrt{\frac{L_{\nabla_{x^{k-1}_{i}} H}}{L_{\nabla_{x^{k}_{i}} H}}}$ \\ \hline

ABPL \cite{yang2023accelerated}& $y^{k}_{i}$  &  $\beta_{i}^{k}<1$ \\ \hline

PGels \cite{yang2024proximal} &  $y^{k}_{i}$  &  $\beta_{i}^k<\sqrt{\frac{\delta\mu^k_{i}(\mu^k_{i}-L_{\nabla_{x^{k}_{i}} H})}{4(\mu^k_{i}+L_{\nabla_{x^{k}_{i}} H})^2}}$, where $\mu^k_{i}\geq \frac{L_{\nabla_{x^{k}_{i}} H}+2c}{1-\delta},~\delta \in (0,1),~c>0$. 
\\ \hline 
\end{tabular*}
\label{paraselect}
\end{table*}


Furthermore, since we can prove in the next section that our proposed method can ensure the monotonic convergence for arbitrary values of the extrapolation parameters while the global convergence of our proposed method only requires the extrapolation parameters to be bounded above by a constant $C$ ($C<1$) after a finite number of iterations, we can leverage this theoretical insight to design a two-phase adaptive strategy for the proposed method. 
Specifically, this two-phase adaptive strategy allows the extrapolation parameters to take arbitrarily large values (even exceeding $1$) in the initial rapid descent phase to accelerate the convergence. Once the iterates stabilize, we impose the upper bounds $\alpha_{max},\beta_{max}\in[0,1)$ on the extrapolation parameters to ensure global convergence. By incorporating this two-phase adaptive strategy into Algorithm \ref{IBPL$^+$}, we propose the IBPL$^+$-TP algorithm (i.e., Algorithm \ref{IBPLtp}). This design can explicitly separate the rapid descent phase from the steady refinement phase while tailoring the adaptive rule to each phase, thereby enhancing the numerical performance.

\begin{footnotesize}

\begin{algorithm}[!h]
\caption{IBPL$^+$-TP: Inertial block proximal linearized method with two-phase adaptive momentum}\label{IBPLtp}
    \SetAlgoLined
     \LinesNumbered
        \KwIn{$\{{x^{1}_{i}} \}^{N}_{i=1}=\{{x^{0}_{i}} \}^{N}_{i=1} \in \mathrm{dom} \ J$, $\rho_{1}>0$, 
        $t_{1}, t_{2}\in[1, \infty)$, $\alpha_{rapid},\beta_{rapid}\in [0,\infty)$, $\alpha_{max},\beta_{max}\in[0,1)$, $\alpha^{1}=\{\alpha_{i}^{1}\in [0, 1)\}^{N}_{i=1}$, $ \beta^{1}=\{\beta_{i}^{1}\in [0, 1)\}^{N}_{i=1}$, $T\in [0, \infty)$. $\overline{\alpha}=\alpha_{rapid},~\overline{\beta}=\beta_{rapid}$.}
        

        \For {$k = 1,2,3,\cdots$ }{
        \hspace{-1.1mm}$\{{z^{k}_{i}} \}^{N}_{i=1}$=$\{{x^{k}_{i}} \}^{N}_{i=1}$+$\alpha^{k}\times (\{{x^{k}_{i}} \}^{N}_{i=1}-\{{x^{k-1}_{i}} \}^{N}_{i=1})$, \\

        \hspace{-1.1mm}$\{{y^{k}_{i}} \}^{N}_{i=1}$=$\{{x^{k}_{i}} \}^{N}_{i=1}$+$\beta^{k}\times (\{{x^{k}_{i}} \}^{N}_{i=1}-\{{x^{k-1}_{i}} \}^{N}_{i=1})$. \\

        \For {$i = 1$ to N}{

        \hspace{-1.5mm}$\gamma^{k}_{i}\in (1,\infty), ~\sigma_{i}^{k} =\frac{1}{\gamma^{k}_{i} (L_{\nabla_{x^{k}_{i}} H})}$, 
         
        \hspace{-1.5mm} $h_{i}(y^{k}_{i})=H(\{x^{k+1}_{j}\}_{j=1}^{i-1},y^k_{i},\{x^{k}_{j}\}_{j=i+1}^{N})$, 
         
        \hspace{-1.5mm} $x^{k+1}_{i} \in prox_{\sigma_{i}^{k} F_{i}} (z^{k}_{i}-\sigma_{i}^{k} \nabla_{x_{i}} h_{i}(y^{k}_{i})).$

        }

       \eIf{\begin{align}\hspace{-2.6mm} && J(x^{k+1}_{(N)}) &\leq J(x^{k}_{(N)}) \notag \\
       && &-\rho_{1}(\|x^{k+1}_{(N)}-y^{k}_{(N)}\|^2+\|x^{k+1}_{(N)} 
      -z^{k}_{(N)}\|^2)\notag\end{align}} 
        {\hspace{-2.1mm} $\alpha^{k+1}=\min(t_{1}\alpha^{k},\overline{\alpha})$, 
        
        \hspace{-1.5mm}$\beta^{k+1}=\min(t_{2}\beta^{k},\overline{\beta})$. }
        {\hspace{-2.1mm} $z^{k}_{(N)}=y^{k}_{(N)}=x^{k}_{(N)}$, $\alpha^{k+1}=\frac{\alpha^{k}}{t_{1}},~\beta^{k+1}=\frac{\beta^{k}}{t_{2}}$, 
        
        \hspace{-2.1mm} and re-update $x^{k+1}_{(N)}$ through step 4 to 8.

        }

        }

        \If{$\frac{|J(x^{k+1}_{(N)})-J(x^{k}_{(N)})|}{|J(x^{0}_{(N)})|}< T$} 
        {$\overline{\alpha}=\alpha_{max}$, $\overline{\beta}=\beta_{max}$. }

        \textbf{Return} $\left\{{x^{k+1}_{i}} \right\}^{N}_{i=1}$.

\end{algorithm}
\end{footnotesize}

\begin{remark}
(\romannumeral1) When $T=\infty$, IBPL$^+$-TP degenerates into IBPL$^+$.



\label{remark1}
\end{remark}

As outlined in the Introduction, IBPL$^+$ and IBPL$^+$-TP can be viewed as an inertial version of PALM with the restart step and two different extrapolation points, but in IBPL$^+$ and IBPL$^+$-TP, the extrapolation parameters $\alpha^{k}_{i}$ and $\beta^{k}_{i}$ of two extrapolation points $z^{k}_{i} $ and $y^{k}_{i}$ have a value range that does not depend on the values of any other parameters, such as the step size factor $\gamma^{k}_{j}$ and $L_{\nabla_{x^{k}_{j}} H}$. This independence simplifies the difficulty of selecting and updating the extrapolation parameters of these two extrapolation points, thereby improving the convergence performance of both IBPL$^+$ and IBPL$^+$-TP. Additionally, in IBPL$^+$, $\alpha^k_{i}$ and $\beta^k_{i}$ can be adaptively updated, while in IBPL$^+$-TP, adaptive rules can be tailored for different phases to update $\alpha^k_{i}$ and $\beta^k_{i}$ more efficiently. 
We also list parameter constraint relationships of existing inertial PALM methods and the number of extrapolation points of existing inertial PALM methods in Table \ref{paraselect}. 
These methods serve as baseline methods and will be compared with our proposed methods in the experiment section later.

Notably, although ABPL \cite{yang2023accelerated} can use a single extrapolation point with independent extrapolation parameters, it cannot be extended to the algorithmic framework with multiple extrapolation points due to its algorithmic structure. 
In contrast,  both Algorithm \ref{IBPL$^+$} and Algorithm \ref{IBPLtp} can be extended to the case of using two different extrapolation points while ensuring that the extrapolation parameters of these extrapolation points are independent of and unconstrained by any other parameters. Furthermore, compared to ABPL, which cannot separate the rapid descent phase from the steady refinement phase, Algorithm \ref{IBPLtp} can employ a two-phase adaptive strategy that separates these two phases to effectively update the extrapolation parameters. These advantages significantly improve the flexibility and effectiveness of the proposed methods compared to ABPL, and will be verified in the experiment section later.

\section{Convergence analysis}
\label{sec4}
In this section, we demonstrate that Algorithm \ref{IBPL$^+$} and Algorithm \ref{IBPLtp} can ensure the monotonic and global convergence of Eq. (\ref{e11}), as well as establish its convergence rate. Since the value range of extrapolation parameters of the two extrapolation points in Algorithm \ref{IBPL$^+$} and Algorithm \ref{IBPLtp} does not depend on the values of any other parameters, we propose a modified proof framework as shown below, thereby making it applicable to our proposed method.

\begin{Property}   
The main steps to prove the monotonic and global convergence properties of Eq. (\ref{e11}) through our proposed method are as follows. 

(\romannumeral1) Monotonic decrease property, i.e., there exists a positive constant $\rho$ such that 
$\rho(\|x^{k+1}_{(N)}-y^{k}_{(N)}\|+\|x^{k+1}_{(N)}-z^{k}_{(N)}\|)^{2}\leq J(x^{k}_{(N)})-J(x^{k+1}_{(N)})$.

(\romannumeral2) Subgradient lower bound property, i.e., there exists a positive constant $\rho_{b}$ such that 
$\rho_{b}(\|x^{k+1}_{(N)}-y^{k}_{(N)}\|+\|x^{k+1}_{(N)}-z^{k}_{(N)}\|)\geq \|p_{x^{k+1}}\|$,     where $p_{x^{k+1}} \in \partial J(x^{k+1}_{(N)})$. 

(\romannumeral3) Using the Kurdyka–Łojasiewicz (KŁ) property, the generated sequence $\left\{x^{k}_{(N)}\right\}_{k \in \mathbb{N}}$ is a Cauchy sequence. 
\label{based}
\end{Property}  

Notably, from Remark \ref{remark1}, we know that the Algorithm \ref{IBPLtp} degenerates into Algorithm \ref{IBPL$^+$} when $T=\infty$. Consequently, we only need to prove that the Algorithm \ref{IBPLtp} satisfies the Property \ref{based} in this section.

\subsection{Monotonic decrease of the objective function}
We prove Property \ref{based}(\romannumeral1).  
\begin{theorem} 
Let $\left\{x^{k}_{(N)}\right\}_{k \in \mathbb{N}}$ be the sequence generated by Algorithm \ref{IBPLtp}. \\ 
(\romannumeral1) 
$J(x^{k+1}_{(N)})$ is nonincreasing and in particular 
\begin{equation}
\begin{aligned}
    J(x^{k+1}_{(N)})\leq
    J(x^{k}_{(N)}) -\rho(\|x^{k+1}_{(N)}-y^{k}_{(N)}\|+\|x^{k+1}_{(N)}-z^{k}_{(N)}\|)^{2}, \\
\end{aligned}
\label{e33}
\end{equation}
where $\rho$ is a positive constant.  \\
(\romannumeral2) We have 
\begin{center}
$\lim_{k \to \infty} \|x^{k+1}_{(N)}-y^{k}_{(N)}\|=\lim_{k \to \infty} \|x^{k+1}_{(N)}-z^{k}_{(N)}\|=0$. 
\end{center}
\label{t1}
\end{theorem}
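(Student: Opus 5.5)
The key observation is that Algorithm \ref{IBPLtp} contains a restart test: every accepted iterate either passes the sufficient-decrease test
\[
J(x^{k+1}_{(N)}) \leq J(x^{k}_{(N)})-\rho_{1}\big(\|x^{k+1}_{(N)}-y^{k}_{(N)}\|^2+\|x^{k+1}_{(N)}-z^{k}_{(N)}\|^2\big),
\]
or is recomputed with $z^{k}_{(N)}=y^{k}_{(N)}=x^{k}_{(N)}$, in which case the step reduces to a PALM step. So I split into these two cases and show a sufficient-decrease inequality in each. Then I convert the sum of squares into the square of the sum using $(a+b)^2\le 2(a^2+b^2)$.

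\textbf{Case 1 (test passes).} Directly from the test and $(a+b)^2\le 2a^2+2b^2$ we get (\ref{e33}) with $\rho=\rho_1/2$.

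\textbf{Case 2 (restart).} Now $z^k_i=y^k_i=x^k_i$, and $x^{k+1}_i$ minimizes the prox objective in (\ref{iprox}). Comparing its value at $x=x^{k+1}_i$ with its value at $x=x^k_i$ gives
\[
F_i(x^{k+1}_i)+\tfrac{\gamma^k_i L_i}{2}\|x^{k+1}_i-x^k_i\|^2+\langle\nabla_{x_i}h_i(x^k_i),x^{k+1}_i-x^k_i\rangle\le F_i(x^k_i).
\]
Adding the descent lemma, Proposition \ref{p3}, for $h_i$ between $x^k_i$ and $x^{k+1}_i$ yields
\[
H(\{x^{k+1}_j\}_{j\le i},\{x^k_j\}_{j>i})+F_i(x^{k+1}_i)\le H(\{x^{k+1}_j\}_{j<i},\{x^k_j\}_{j\ge i})+F_i(x^k_i)-\tfrac{(\gamma^k_i-1)L_i}{2}\|x^{k+1}_i-x^k_i\|^2 .
\]
Telescoping over $i=1,\dots,N$ gives
\[
J(x^{k+1}_{(N)})\le J(x^k_{(N)})-c\sum_i\|x^{k+1}_i-x^k_i\|^2 .
\]
Since $y^k=z^k=x^k$ here, the right side of (\ref{e33}) is $\rho\,(2\|x^{k+1}_{(N)}-x^k_{(N)}\|)^2$. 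With the table's norm convention $\|x_{(N)}\|=\sum_i\|x_i\|$ we have $(\sum_i a_i)^2\le N\sum_i a_i^2$, so (\ref{e33}) holds with $\rho=c/(4N)$. The final constant is the minimum of the two.

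\textbf{Main obstacle: a uniform constant $c$.} Getting $c>0$ independent of $k$ needs $\gamma^k_i$ bounded away from $1$ and $L_{\nabla_{x^k_i}H}$ bounded below. I would assume the standard conditions $\gamma^k_i\ge\underline\gamma>1$ and $L_i^k\ge\underline L>0$; any $L$ can be enlarged to meet the second.

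The local Lipschitz property in Definition \ref{dlipchitz} also has to apply on the segment $[x^k_i,x^{k+1}_i]$. I would secure this from coercivity: the iterates stay in the level set $\{J\le J(x^0_{(N)})\}$ by monotonicity, inductively. That set is bounded, hence compact by Proposition \ref{dcompact}, so a uniform Lipschitz constant exists there. The subtlety of making this induction rigorous is the step I expect to need the most care.

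\textbf{Part (ii).} By part (i), $J(x^k_{(N)})$ is nonincreasing and bounded below by Assumption \ref{assump1}(iii), so it converges. Summing (\ref{e33}) over $k$ gives
\[
\rho\sum_k\big(\|x^{k+1}_{(N)}-y^k_{(N)}\|+\|x^{k+1}_{(N)}-z^k_{(N)}\|\big)^2\le J(x^1_{(N)})-\inf J<\infty .
\]
Hence each term tends to $0$, and both limits in (ii) are zero.
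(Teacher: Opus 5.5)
Your proposal is correct and follows the paper's proof essentially step for step. You split on whether the sufficient-decrease test passes and use $(a+b)^2\le 2(a^2+b^2)$ in the first case. In the restart case, where $y^k_{(N)}=z^k_{(N)}=x^k_{(N)}$, you run the PALM-type blockwise descent: compare the prox objective at $x^{k+1}_i$ and at $x^k_i$, add Proposition \ref{p3}, and telescope over blocks. You then take the smaller constant and sum the inequality for (ii).

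Your extra care repairs genuine sloppiness in the paper. The paper's constant $\rho_t=\frac12\min_j\{(\gamma^k_j-1)L_{\nabla_{x^k_j}H}\}$ depends on $k$, so your uniform lower bound on $(\gamma^k_i-1)L_{\nabla_{x^k_i}H}$ is needed. The paper's block induction also tacitly uses $\sum_i\|x_i\|^2$ where the stated convention $\|x_{(N)}\|=\sum_i\|x_i\|$ forces your factor $N$. The local-Lipschitz/level-set issue you flag is not addressed by the paper at all: it applies Proposition \ref{p3} as though each block gradient were globally Lipschitz, as in PALM and as holds for the $\ell_0$-SNMF/SNCP models, so you need not resolve it to match the paper's proof.
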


\begin{proof}
(\romannumeral1) 
We first define 
\begin{center}
    $h_{j}(x_{j})=H(\{x_{i}^{k+1}\}_{i=1}^{j-1}, x_{j}, \{x_{i}^{k}\}_{i=j+1}^{N})$. 
\end{center}
If $J(x^{k+1}_{(N)}) \leq J(x^{k}_{(N)})-\rho_{1}(\|x^{k+1}_{(N)}-y^{k}_{(N)}\|^2+\|x^{k+1}_{(N)}-z^{k}_{(N)}\|^2)$, from $(\|x\|+\|y\|)^2\leq 2(\|x\|^2+\|y\|^2)$, we have

\begin{align}
    J(x^{k+1}_{(N)}) \leq J(x^{k}_{(N)})-\frac{\rho_{1}}{2}(\|x^{k+1}_{(N)}-y^{k}_{(N)}\|+\|x^{k+1}_{(N)}-z^{k}_{(N)}\|)^2.
    \label{c1end1}
\end{align}

If not ,since Proposition \ref{p3}, we have 
\begin{align}
&&H( x_{1}^{k+1}, \{x_{j}^{k}\}_{j=2}^{N})&\leq H(x^{k}_{(N)}) \notag\\ 
&& &+\langle \nabla_{x_{1}} H(x^{k}_{(N)}), x_{1}^{k+1}-x^{k}_{1}\rangle \notag\\ 
&&\ &+\frac{L_{\nabla_{x^{k}_{1}} H}}{2}\|x^{k+1}_{1}-x^{k}_{1}\|^{2}. 
\label{e331}
\end{align}
From Eq. (\ref{iprox}), we obtain 
\begin{align}
 && F_{1}(x^{k}_{1}) &\geq \langle \nabla_{x_{1}} H(x^{k}_{(N)}), x_{1}^{k+1}-x^{k}_{1} \rangle+F_{1}(x^{k+1}_{1}) \notag \\
 &&\  &+\frac{\sigma^{k}_{1}}{2}\|x^{k+1}_{1}-x^{k}_{1}\|^{2}. 
 \label{e332}
\end{align}
Then sum of Eq. (\ref{e331}) and Eq. (\ref{e332}), we have 
\begin{align}
&& H(x^{k}_{(N)})+F_{1}(y_{1}^{k}) &\geq F_{1}(x_{1}^{k+1})+\rho_{t}\|x^{k+1}_{1}-x^{k}_{1}\|^{2} \notag \\
&& &+H( x_{1}^{k+1}, \{x_{j}^{k}\}_{j=2}^{N}), \notag 
\end{align}
where $\rho_{t}=\frac{1}{2\sigma^{k}_{1}}-\frac{L_{\nabla_{x^{k}_{1}} H}}{2}$.  Assuming  Theorem \ref{t1}(\romannumeral1) holds when $i=n$, i.e., 
\begin{align}
&& &H(\{x_{j}^{k+1}\}_{j=1}^{n},\{x_{j}^{k}\}_{j=n+1}^{N})+\sum_{j=1}^n F_{j}(x_{j}^{k+1}) \notag \\
&& &\leq H(x^{k}_{(N)})+\sum_{j=1}^n F_{j}(x_{j}^{k}) \notag \\
&& &-\rho_{t}\|\{x_{j}^{k+1}\}_{j=1}^{n}-\{x_{j}^{k}\}_{j=1}^{n}\|^{2}, \label{e333}
\end{align}
where $\rho_{t}=\frac{1}{2}\min(\left\{\frac{1}{\sigma^{k}_{j}}-L_{\nabla_{x^{k}_{j}} H}
\right\}_{j=1}^{n})$. 

Since Eq. (\ref{iprox}), we also have  
\begin{align}
&& F_{n+1}(x^{k}_{n+1}) &\geq F_{n+1}(x^{k+1}_{n+1}) +\frac{\sigma^{k}_{n+1}}{2}\|x^{k+1}_{n+1}-x^{k}_{n+1}\|^{2} \notag \\ 
&&   &+\langle \nabla_{x_{n+1}} h_{n+1}(x^{k}_{n+1}), x^{k+1}_{n+1}-x^{k}_{n+1} \rangle. \label{e334}
\end{align}
From Proposition \ref{p3}, we infer 
\begin{align}
&& H(x^{k+1}_{(n+1)}, \{x_{j}^{k}\}_{j=n+2}^{N}) &\leq H(x^{k+1}_{(n)}, \{x_{j}^{k}\}_{j=n+1}^{N}) \notag \\
&& &+\frac{L_{\nabla_{x^{k}_{n+1}} H}}{2}\|x^{k+1}_{n+1}-x^{k}_{n+1}\|^{2}  \notag \\
&& &+ \langle \nabla_{x_{n+1}} h_{n+1}(x^{k}_{n+1}), x^{k+1}_{n+1}-x^{k}_{n+1}\rangle .\label{e335}
\end{align}
Thus, sum of Eq. (\ref{e333}), Eq. (\ref{e334}) and Eq. (\ref{e335}), when $n+1=N$, we have  
\begin{align}
&& &J(\{{x^{k+1}_{j}} \}^{N}_{j=1}) \leq J(\{{x^{k}_{j}} \}^{N}_{j=1})-\rho_{t}\|x^{k+1}_{(N)}-x^{k}_{(N)}\|^{2},
\label{c1end2}
\end{align}
where $\rho_{t}=\frac{1}{2}\min(\left\{\frac{1}{\sigma^{k}_{j}}-L_{\nabla_{x^{k}_{j}} H}
\right\}_{j=1}^{N})$. 

From Eq. (\ref{c1end1}) and Eq. (\ref{c1end2}), let $\rho=\frac{1}{4}\min(\rho_{t},\rho_{1})$, then the Theorem \ref{t1}(\romannumeral1) is true. 

(ii) From Eq. (\ref{e33}), we have
\begin{small}
\begin{align}
\rho(\|x^{k+1}_{(N)}-y^{k}_{(N)}\|+\|x^{k+1}_{(N)}-z^{k}_{(N)}\|)^2\leq J({x^{k}_{(N)}})-J({x^{k+1}_{(N)}}). \notag
\end{align}
\end{small}
Sum of both side, since $J$ has a lower bound, thus we have  
\begin{small}
\begin{align}
&& \hspace{-5mm} \rho \sum_{k=1}^{\infty} (\|x^{k+1}_{(N)}-y^{k}_{(N)}\|+\|x^{k+1}_{(N)}-z^{k}_{(N)}\|)^2&=\sum_{k=1}^{\infty}(J(x_{(N)}^{k})-J(x_{(N)}^{k+1})) \notag \\
&& &=J(x_{(N)}^{1})-\inf J \notag \\
&& &<\infty.\notag
\end{align}
\end{small}
It follows that
\begin{center}
$\lim_{k \to \infty} (\|x^{k+1}_{(N)}-y^{k}_{(N)}\|+\|x^{k+1}_{(N)}-z^{k}_{(N)}\|)^{2}=0$,  
\end{center}
which implies 
\begin{center}
$\lim_{k \to \infty} \|x^{k+1}_{(N)}-y^{k}_{(N)}\|=\lim_{k \to \infty} \|x^{k+1}_{(N)}-z^{k}_{(N)}\|=0$.
\end{center}
\end{proof}

\subsection{Subgradient lower bound for the iterates gap}
We prove Property \ref{based}(\romannumeral2) and the derived set of $\left\{x^{k}_{(N)}\right\}_{k \in \mathbb{N}}$ is the critical point set and the compact set.

\begin{theorem}
(i) In Algorithm \ref{IBPLtp}, if we define
\begin{align}
p_{x_{j}^{k+1}}=\nabla_{x_{j}}h_{j} (x^{k+1}_{j})-\nabla_{x_{j}}h_{j} (y^{k}_{j})+\frac{1}{\sigma^{k}_{j}
}(z^{k}_{j}-x^{k+1}_{j}),  \label{PXJ}
\end{align}
where $h_{j}(x_{j})=H(\{x_{i}^{k+1}\}_{i=1}^{j-1}, x_{j}, \{x_{i}^{k}\}_{i=j+1}^{N})$, then we have 
\begin{equation}
\begin{aligned}
&& p_{x_{j}^{k+1}} &\in \partial_{x_{j}}J(\{x_{i}^{k+1}\}_{i=1}^{j-1},x_{j}^{k+1}, \{x_{i}^{k}\}_{i=j+1}^{N}),  
\end{aligned}
\label{e34}
\end{equation}

(ii) We also have
\begin{equation}
\begin{aligned}
       \|\{p_{x^{k+1}_{i}} \}^{N}_{i=1}\| \leq \rho_{b}(\|{x^{k+1}_{(N)}}-{z^{k}_{(N)}}\|+\|{x^{k+1}_{(N)}}-{y^{k}_{(N)}}\|),
\end{aligned}
\label{e35}
\end{equation}    
where $\rho_{b}$ is a positive constant.   
\label{t2}
\end{theorem}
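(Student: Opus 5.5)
For part (i), I will start from the first-order optimality condition of the block subproblem \eqref{iprox}. Since $x^{k+1}_j$ minimizes $F_j(x)+\frac{1}{2\sigma^k_j}\|x-z^k_j\|^2+\langle \nabla_{x_j} h_j(y^k_j), x-z^k_j\rangle$, Proposition \ref{p1} gives $0\in \partial F_j(x^{k+1}_j)+\frac{1}{\sigma^k_j}(x^{k+1}_j-z^k_j)+\nabla_{x_j}h_j(y^k_j)$. The quadratic and linear terms are smooth, so Proposition \ref{p2} justifies this sum rule. Hence $\frac{1}{\sigma^k_j}(z^k_j-x^{k+1}_j)-\nabla_{x_j}h_j(y^k_j)\in\partial F_j(x^{k+1}_j)$.

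Next I use the block structure of $J$. At the partially updated point $(\{x_i^{k+1}\}_{i<j},x_j,\{x_i^k\}_{i>j})$, the function $J$ viewed as a function of $x_j$ equals $h_j(x_j)+F_j(x_j)$ plus terms constant in $x_j$. By Proposition \ref{p2} again, $\partial_{x_j}J=\nabla_{x_j}h_j(x^{k+1}_j)+\partial F_j(x^{k+1}_j)$. Adding $\nabla_{x_j}h_j(x^{k+1}_j)$ to the inclusion above yields exactly \eqref{PXJ}, which proves \eqref{e34}. If a restart occurs, the same argument applies verbatim with $z^k=y^k=x^k$, because $p$ is defined through whichever $z^k,y^k$ were actually used.

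For part (ii), I bound each block by the triangle inequality and the block Lipschitz property of Definition \ref{dlipchitz}:
\[
\|p_{x^{k+1}_j}\|\le L_{\nabla_{x_j}H}\|x^{k+1}_j-y^k_j\|+\frac{1}{\sigma^k_j}\|x^{k+1}_j-z^k_j\| .
\]
Here $1/\sigma^k_j=\gamma^k_jL_{\nabla_{x^k_j}H}$. Summing over $j$ and using the norm convention $\|\{x_i\}\|=\sum_i\|x_i\|$ then gives \eqref{e35} with
\[
\rho_b=\max_j\max\bigl(L_{\nabla_{x_j}H},\,\gamma^k_jL_{\nabla_{x^k_j}H}\bigr).
\]

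The main obstacle is making $\rho_b$ independent of $k$. This requires that the step factors $\gamma^k_j$ and the local Lipschitz constants stay uniformly bounded along the iterates. I would argue as follows. By Theorem \ref{t1}(i), $J(x^k)\le J(x^1)$, and since $J$ is coercive (Assumption \ref{assump1}(iii)), all $x^k$ lie in a bounded sublevel set. Theorem \ref{t1}(ii) keeps $y^k$ and $z^k$ within a bounded distance of $x^{k+1}$, so they lie in a compact set as well. On that compact set, a finite cover of the neighborhoods from Definition \ref{dlipchitz} gives a uniform block Lipschitz constant. I will additionally assume, implicitly as the algorithm does, that $\gamma^k_j$ is chosen from a bounded interval $(1,\bar\gamma]$. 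The segment between $x^{k+1}_j$ and $y^k_j$ must also lie in the region where the local constant applies; compactness plus a slight enlargement of the set handles this.
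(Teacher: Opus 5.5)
Your proposal follows the paper's proof essentially step for step. For (i), the paper likewise writes the optimality condition of \eqref{iprox} via Propositions~\ref{p1} and~\ref{p2} and adds $\nabla_{x_j}h_j(x^{k+1}_j)$; for (ii), it uses the same triangle-inequality and block-Lipschitz bound with $\rho_b=\max_j\bigl(\frac{1}{\sigma^k_j}+L_{\nabla_{x^k_j}H}\bigr)$, of which your $\max$ form is a slightly sharper variant. Your extra argument making $\rho_b$ independent of $k$ goes beyond the paper, which leaves $\rho_b$ in terms of the $k$-dependent $\sigma^k_j$ and $L_{\nabla_{x^k_j}H}$; note, however, that Definition~\ref{dlipchitz} does not supply the finite-cover step, since its neighborhoods live in a single block section with the other blocks frozen, so uniformity needs an extra hypothesis (joint local Lipschitz continuity of the partial gradients, or PALM-style bounds on the partial moduli over bounded sets) besides your bound $\gamma^k_j\le\bar\gamma$.
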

\begin{proof}
(i) By Proposition \ref{p1}, Proposition \ref{p2} and Eq. (\ref{iprox}), ${\forall} j \in  [N]$, it follows that 
\begin{align}
&& 0 &\in  \nabla_{x_{j}}h_{j}(y^{k}_{j})-\frac{1}{\sigma^{k}_{j}} (z^{k}_{j}-x^{k+1}_{j})  \notag \\
&& & +\partial_{x_{j}}(\sum_{i=1}^{j-1} F_{i}(x^{k+1}_{i})+\sum_{i=j+1}^N F_{i}(x^{k}_{i})+F_{j}(x^{k+1}_{j})). \notag
\end{align}
Thus we infer 
\begin{align}
&& &\nabla_{x_{j}}h_{j} (x^{k+1}_{j}) -\nabla_{x_{j}}h_{j} (y^{k}_{j})+ \frac{1}{\sigma^{k}_{j}} (z^{k}_{j}-x^{k+1}_{j}) \notag \\ 
&&\ &\in \nabla_{x_{j}}h_{j} (x^{k+1}_{j})+\partial_{x_{j}}(\sum_{i=1}^{j} F_{i}(x^{k+1}_{i})+\sum_{i=j+1}^N F_{i}(x^{k}_{i})) \notag\\
&&\ &=  \partial_{x_{j}}J(\{x_{i}^{k+1}\}_{i=1}^{j-1},x_{j}^{k+1}, \{x_{i}^k\}_{i=j+1}^{N}), \notag
\end{align}
which means the Theorem \ref{t2}(\romannumeral1) is true.

(ii) From Definition \ref{d3} and Assumption \ref{assump1}, ${\forall} j \in  [N]$, we have 
\begin{align}
&& \|p_{x_{j}^{k+1}}\| &=\|\nabla_{x_{j}}h_{j}(x_{j}^{k+1})-\nabla_{x_{j}}h_{j}(y_{j}^{k})+ \frac{1}{\sigma^{k}_{j}} (z^{k}_{j}-x^{k+1}_{j}) \| \notag \\
&& &\leq \|\nabla_{x_{j}}h_{j}(x_{j}^{k+1})-\nabla_{x_{j}}h_{j}(y_{j}^{k})\| \notag \\
&& &+ \|\frac{1}{\sigma^{k}_{j}} (z^{k}_{j}-x^{k+1}_{j}) \|  \notag \\
&& &\leq L_{\nabla_{x^{k}_{j}} H}\|x^{k+1}_{j}-y^{k}_{j}\|+\frac{1}{\sigma^{k}_{j}} \|x^{k+1}_{j}-z^{k}_{j}\| \notag \\
&& &\leq(L_{\nabla_{x^{k}_{j}} H}+\frac{1}{\sigma^{k}_{j}}) (\|x^{k+1}_{j}-y^{k}_{j}\|+\|x^{k+1}_{j}-z^{k}_{j}\|).  
\end{align}
Since the arbitrariness of $j$, we infer    
\begin{align}
\|\left\{p_{x^{k+1}_{j}} \right\}_{j=1}^N\| \leq \rho_{b}(\|{x^{k+1}_{(N)}}-{z^{k}_{(N)}}\|+\|{x^{k+1}_{(N)}}-{y^{k}_{(N)}}\|),
\label{c32}
\end{align}
where $\rho_{b}=max(\left\{\frac{1}{\sigma^{k}_{j}}+L_{\nabla_{x^{k}_{j}} H}\right\}_{j=1}^N)$. Therefore, Eq. (\ref{e35}) is true. 
\end{proof}

\begin{theorem}
Let $A$ denote the set of the sequence generated by the Algorithm \ref{IBPLtp},  then the derived set $A'$ of $A$ must be a compact set. \label{t3}
\end{theorem}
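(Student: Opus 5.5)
The plan is to show that $A'$ is both bounded and closed, and then invoke Proposition \ref{dcompact}. Throughout, $A=\{x^{k}_{(N)}\}_{k\in\mathbb{N}}$ is viewed as a subset of the finite-dimensional space $\prod_{i=1}^N\mathbb{R}^{d_i}$.

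\textbf{Boundedness.} First we show that $A$ itself is bounded. By Theorem \ref{t1}(\romannumeral1), the sequence $J(x^{k}_{(N)})$ is nonincreasing, so $J(x^{k}_{(N)})\leq J(x^{1}_{(N)})<\infty$ for every $k$; the last value is finite because the initial point lies in $\mathrm{dom}\,J$. Hence $A\subseteq\{x: J(x)\leq J(x^{1}_{(N)})\}$, which is contained in the strict sublevel set $\{x: J(x)<J(x^{1}_{(N)})+1\}$. By coercivity in Assumption \ref{assump1}(iii) and Definition \ref{dlower}, this set is bounded. So $A\subseteq B(0,R)$ for some $R>0$.

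Next we pass this bound to $A'$. Take any $\bar x\in A'$. By Definition \ref{dclu}, every ball around $\bar x$ meets $A$, so $\bar x\in\overline{A}\subseteq\overline{B(0,R)}$. Therefore $A'$ is bounded.

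\textbf{Closedness.} We show that the derived set of any set in a metric space is closed. Let $x\in\overline{A'}$ and fix $\epsilon>0$. There is a point $a\in A'\cap B(x,\epsilon/2)$. If $a=x$, then $x\in A'$ immediately. Otherwise, set $r=\min(\epsilon/2,d(a,x))>0$. Because $a\in A'$, the punctured ball $B(a,r)\setminus\{a\}$ contains a point $y\in A$. This $y$ satisfies $d(y,x)<\epsilon$, and $y\neq x$ because $d(y,a)<d(a,x)$. Hence $(B(x,\epsilon)-\{x\})\cap A\neq\emptyset$ for every $\epsilon>0$, so $x\in A'$. This gives $\overline{A'}=A'$.

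\textbf{Conclusion.} $A'$ is a bounded closed subset of a finite-dimensional space, so Proposition \ref{dcompact} shows it is compact. Nonemptiness is not required by the statement, but it also follows from Bolzano--Weierstrass, assuming the iterates are not eventually constant. If they are eventually constant, the limit point is handled through subsequential limits instead.

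\textbf{Main obstacle.} The delicate step is boundedness, specifically the use of the coercivity definition as stated. Definition \ref{dlower} speaks of strict sublevel sets $\{f<a\}$, which is why the argument above places $A$ inside the strict sublevel set at level $J(x^{1}_{(N)})+1$. The argument also depends on the monotonicity in Theorem \ref{t1} holding on both branches of the restart step. That proof covers both branches, including the re-update with $z^k=y^k=x^k$, which reduces to a PALM step. So $J(x^{k}_{(N)})\leq J(x^{1}_{(N)})$ holds for all $k$, regardless of how large $\alpha^k$ and $\beta^k$ are during the rapid phase.
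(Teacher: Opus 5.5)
Your proof is correct and follows essentially the same route as the paper. You bound $A$ (hence $A'\subseteq\overline{A}$) via coercivity, show $\overline{A'}\subseteq A'$ by the standard punctured-ball argument, and conclude with Proposition~\ref{dcompact}; you are in fact more explicit than the paper, which cites only Assumption~\ref{assump1} for boundedness without noting that the monotonicity in Theorem~\ref{t1} is what keeps the iterates in a sublevel set of $J$. Your side remark on nonemptiness is unnecessary and slightly off: for bounded $A$, Bolzano--Weierstrass gives $A'\neq\emptyset$ exactly when $A$ is infinite as a set, which ``not eventually constant'' does not guarantee, and a finite $A$ has $A'=\emptyset$. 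This does not affect the theorem, since $\emptyset$ is compact.
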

\begin{proof}
From Definition \ref{d1}, Proposition \ref{dopen} and Assumption \ref{assump1}, we know that
\begin{center}
$A' \subseteq \overline{A} \subseteq  B(x,\delta) ~({\forall} x \in \overline{A}, {\exists} \delta <\infty)$, 
\end{center}
where $B(x,\delta)$ is an open ball which defined as 
\begin{center}
$B(x,\delta)=\left\{y: d(x,y)<\delta\right\}$, 
\end{center}
where $d$ is a metric function defined on $X$, so $A'$ is a bounded set. Next, we just need to prove $ \overline{A'} \subseteq A'$. Take $x\in \overline{A'}$, we know that $B(x,\frac{\delta}{2})\cap A' \neq \emptyset$, take $y \in B(x,\frac{\delta}{2})\cap A'$, if $y=x$, then the Theorem \ref{t3} holds. If not, take $z \in (B(y,r)-\left\{y\right\}) \cap A$, where $r<d(x,y)$, then $z\neq x$ and $d(x,z)\leq d(x,y)+d(y,z)<\delta$, thus we have 
\begin{center}
$(B(x,\delta)-\left\{x\right\}) \cap A\neq \emptyset~({\forall} \delta>0)$.
\end{center}
Then from Definition \ref{dclu} and Proposition \ref{dcompact}, the Theorem \ref{t3} is true.  
\end{proof}

Define $x'$ is the derived set of $\{x^{k}_{(N)}\}_{k \in \mathbb{N}}$. Next, we prove the following theorem. 
\begin{theorem}
(i) Let $\{x^{k}_{(N)}\}_{k \in \mathbb{N}}$ be a sequence generated by Algorithm \ref{IBPLtp}, then $J$ is constant on $x'$. 

(ii) $x' \subseteq crit~ J$, where $crit~ J$ is the critical point set of $J$, i.e.,  
\begin{center}
$0\in \partial J^{*}$ $(J^{*}=\lim_{k \rightarrow \infty} J(x^{k}_{(N)}))$.      
\end{center}

\begin{proof}
(i) ${\forall}\overline{x} \in x'$, there exists a subsequence $x_{(N)}^{k_{j}}$  such that 
\begin{center}
  $\lim_{j \to \infty} x_{(N)}^{k_{j}}= \overline{x}$.  
\end{center}
Let 
\begin{align}
    && &F(x_{(N)}^{k_{j}})=\sum_{i=1}^{N}F_{i}(x_{i}^{k_{j}}). \label{ec200}
\end{align}
Since $F_{i}$ is lower semicontinuous, from Definition \ref{lower} and Eq. (\ref{ec200}), we obtain that 
\begin{align}
 && & \lim_{j \to \infty} \inf F(x_{(N)}^{k_{j}}) \geq F(\overline{x}). \label{t41}
\end{align}
Choosing $k = k_{j} -1$, from Eq. (\ref{iprox}), we infer 
\begin{align}
&&\lim_{j \to \infty} \sup F_{i}(x_{i}^{k_{j}})  &\leq \lim_{j \to \infty} \sup (F_{i}(\overline{x_{i}})+\frac{1}{2\sigma_{i}^{k}}\|\overline{x_{i}}-z^{k}_{i}\|_{F}^{2} \notag \\
&& &+\langle \nabla_{x_{i}} h_{i}(y_{i}^k), \overline{x_{i}}-z^{k}_{i}\rangle),  \label{t421}
\end{align}
where $h_{i}(y_{i}^k)=H(\{x^{k+1}_{n}\}_{n=1}^{i-1}, y_{i}^{k},\{x^k_{n}\}_{n=i+1}^{N})$, $\lim_{j \to \infty} x_{i}^{k_{j}}= \overline{x_{i}}$. Since $\lim_{j \to \infty} x_{(N)}^{k_{j}}= \overline{x}$ and  $\lim_{k \to \infty} \|x^{k+1}_{(N)}-y^{k}_{(N)}\|=\lim_{k \to \infty} \|x^{k+1}_{(N)}-z^{k}_{(N)}\|=0$ (Theorem \ref{t1}(ii)), we can get that 
\begin{align}
&& \lim_{j \to \infty} \|\overline{x_{i}}-z^{k}_{i}\|_{F} &\leq \lim_{j \to \infty} \|\overline{x_{i}}-x^{k_{j}}_{i}\|_{F} \notag \\ 
&& &+\lim_{j \to \infty} \|x^{k_{j}}_{i}-z^{k_{j}-1}_{i}\|_{F} \notag \\
&&& =0. \label{t422}
\end{align}
From Eq. (\ref{t421}) and Eq. (\ref{t422}), we infer 
\begin{align}
&& \lim_{j \to \infty} \sup F_{i}(x_{i}^{k_{j}}) &\leq \lim_{j \to \infty} \sup F_{i}(\overline{x_{i}}), \label{ectemp}
\end{align}
where $\lim_{j \to \infty} x_{i}^{k_{j}}= \overline{x_{i}}$. Therefore, from Eq. (\ref{ec200}) and Eq. (\ref{ectemp}), we have
\begin{align}
&& &\lim_{j \to \infty} \sup F(x_{(N)}^{k_{j}}) \leq \lim_{j \to \infty} \sup F(\overline{x}). \label{t431}
\end{align}
From Eq. (\ref{ec200}), Eq. (\ref{t431}) and Eq. (\ref{t41}), we infer 
\begin{align}
&& &\lim_{j \to \infty}F(x_{(N)}^{k_{j}}) =F(\overline{x}). 
\label{t43}
\end{align}
From Theorem \ref{t1} and Proposition \ref{tf4}, we have  
\begin{align}
  && & \lim_{j \to \infty} H(x_{(N)}^{k_{j}})=H( \overline{x}).
   \label{t432}
\end{align}
Thus from Eq. (\ref{t43}) and  Eq. (\ref{t432}), we infer 
\begin{align}
   &&\hspace{-2mm}\lim_{j \to \infty} H(x_{(N)}^{k_{j}})+\lim_{j \to \infty}F(x_{(N)}^{k_{j}})&=\lim_{j \to \infty} (H(x_{(N)}^{k_{j}})+F(x_{(N)}^{k_{j}}))\notag \\
   && &=\lim_{j \to \infty} J(x_{(N)}^{k_{j}}) \notag \\
   && &=J( \overline{x}). \notag
\end{align}
This means $J$ is constant on $x'$.

(ii) Since Theorem \ref{t2}, we know that 
\begin{align}
&& p_{x_{j}^{k+1}} &\in \partial_{x_{j}}J(\{x_{i}^{k+1}\}_{i=1}^{j-1},x_{j}^{k+1}, \{x_{i}^{k}\}_{i=j+1}^{N}),   \label{t21}
\end{align}
where the definition of $p_{x^{k+1}_{j}}$ is the same as Eq. (\ref{PXJ}). From Theorem \ref{t1}(ii) and Theorem \ref{t2}, we infer
\begin{align}
&& \lim_{k \rightarrow \infty} \|\{p_{x^{k+1}_{i}} \}_{i=1}^{N}\| &\leq \lim_{k \rightarrow \infty} \rho_{b} \|x^{k+1}_{(N)}-y^{k}_{(N)}\| \notag \\
&& &+\lim_{k \rightarrow \infty} \rho_{b} \|x^{k+1}_{(N)}-z^{k}_{(N)}\| \notag \\
&& &=0.  \label{t22}
\end{align}
Since Theorem \ref{t1}(i) and Theorem \ref{t5}(i), we know that 
\begin{align}
 && \lim_{k \rightarrow \infty} J(\{x_{i}^{k+1}\}_{i=1}^{j-1},x_{j}^{k+1}, \{x_{i}^{k}\}_{i=j+1}^{N})&=J^{*}.\label{t23} 
\end{align}
Therefore, from Eq. (\ref{t21}),  Eq. (\ref{t22}) and Eq. (\ref{t23}), we have 
\begin{align}
&& 0&\in \partial_{x_{j}}J(\{x_{i}^{k+1}\}_{i=1}^{j-1},x_{j}^{k+1}, \{x_{i}^{k}\}_{i=j+1}^{N}) \notag \\
&& &= \partial_{x_{j}} J^{*}.  \notag
\end{align}
This means $0 \in  \partial J^{*}$.
\end{proof}

\label{t5}
\end{theorem}

\subsection{Global convergence}
Utilizing the Definition \ref{def_KL}, we prove Property \ref{based}(\romannumeral3) that the sequence generated by Algorithm \ref{IBPLtp} has global convergence (see Theorem \ref{glo}) and establish the convergence rate (see Theorem \ref{rate}). 
\begin{theorem}
The sequence $\{x^{k}_{(N)}\}_{k \in \mathbb{N}}$ generated by Algorithm \ref{IBPLtp} converges  when $\alpha_{\max},\beta_{\max}<1$, i.e, 
\begin{center}
$\lim_{k\rightarrow \infty} \|x^{k+p}_{(N)}-x^{k}_{(N)}\|=0 ~(\forall p\in \mathbb{N}).$    
\end{center}
\label{glo}
\end{theorem}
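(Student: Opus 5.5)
The plan is the standard KŁ argument of \cite{bolte2014proximal}, applied to the quantity $d_k:=\|x^{k+1}_{(N)}-y^{k}_{(N)}\|+\|x^{k+1}_{(N)}-z^{k}_{(N)}\|$ rather than to $\|x^{k+1}_{(N)}-x^{k}_{(N)}\|$. We then convert summability of $d_k$ into summability of the steps using $\alpha^k,\beta^k\le C<1$.

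\textbf{Step 1: eventual bound on the extrapolation parameters.} Since $J(x^{k}_{(N)})$ is nonincreasing and bounded below (Theorem \ref{t1}(i)), the relative decrease tends to $0$. Hence the switch condition in Algorithm \ref{IBPLtp} is met at some finite $k_0$. After $k_0$ we have $\overline{\alpha}=\alpha_{max}$ and $\overline{\beta}=\beta_{max}$, and all later $\alpha^k,\beta^k$ are at most $C:=\max(\alpha_{max},\beta_{max},\alpha^{k_0},\beta^{k_0})$.

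This is slightly delicate, because before $k_0$ the parameters may exceed $1$. After $k_0$ they can only shrink (division by $t_1,t_2$) or grow up to $\alpha_{max},\beta_{max}<1$. So after finitely many further halvings they are bounded by some $C<1$. The restart step sets $z^k=y^k=x^k$, so it only helps. In every case
\[
\|x^{k+1}_{(N)}-x^{k}_{(N)}\|\le \|x^{k+1}_{(N)}-z^{k}_{(N)}\|+\alpha^k\|x^{k}_{(N)}-x^{k-1}_{(N)}\|\le d_k+C\|x^{k}_{(N)}-x^{k-1}_{(N)}\| .
\]

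\textbf{Step 2: KŁ inequality.} By coercivity the iterates are bounded, and $x'$ is nonempty and compact (Theorem \ref{t3}). By Theorem \ref{t5}, $J\equiv J^*$ on $x'$ and $x'\subseteq crit\,J$. If $J(x^{K}_{(N)})=J^*$ for some $K$, then Theorem \ref{t1}(i) forces $d_k=0$ from $K$ on, and the recursion in Step 1 gives geometric decay. Otherwise, for large $k$, $\operatorname{dist}(x^k_{(N)},x')<\epsilon$ and $J^*<J(x^k_{(N)})<J^*+\eta$, so Proposition \ref{def_uniKL} applies.

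One technical point concerns the subgradient. Theorem \ref{t2} supplies a block vector $p$ whose blocks are evaluated at mixed points, not exactly an element of $\partial J(x^{k+1}_{(N)})$. I would fix this by adding the block-Lipschitz correction $\nabla_{x_j}H(x^{k+1}_{(N)})-\nabla_{x_j}h_j(x^{k+1}_j)$. It is bounded by $L\|x^{k+1}_{(N)}-x^{k}_{(N)}\|$, and Step 1 bounds that by $d_k+C\|x^k-x^{k-1}\|$. With $\Delta_k:=\|x^{k}_{(N)}-x^{k-1}_{(N)}\|$, this gives $\operatorname{dist}(0,\partial J(x^{k+1}_{(N)}))\le \rho_b' (d_k+d_{k-1}+\Delta_k)$. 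The constants $\rho,\rho_b$ must be uniform in $k$; I would assume $\gamma^k_j$ and $L_{\nabla_{x^k_j}H}$ stay in a compact range on the bounded set of iterates.

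\textbf{Step 3: summation.} By concavity of $\phi$ and Theorem \ref{t1}(i), with $\phi_k:=\phi(J(x^k_{(N)})-J^*)$,
\[
\rho\, d_{k+1}^2\le (\phi_{k+1}-\phi_{k+2})\,\rho_b'(d_{k+1}+d_k+\Delta_{k+1}).
\]
Then $2\sqrt{ab}\le a+b$ gives $2d_{k+1}\le \tfrac{\rho_b'}{\rho}\cdot\tfrac{1}{\delta}(\phi_{k+1}-\phi_{k+2})+\delta(d_{k+1}+d_k+\Delta_{k+1})$. Summing over $k$, and using Step 1 summed in the form $(1-C)\sum\Delta_{k+1}\le\sum d_k+\Delta_{k_1}$, the $\Delta$ terms are absorbed. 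For small $\delta$ we get $\sum_k d_k<\infty$, and therefore $\sum_k \Delta_k\le (1-C)^{-1}(\sum_k d_k+\text{const})<\infty$. Thus $\{x^k_{(N)}\}$ has finite length and is Cauchy, so $\|x^{k+p}_{(N)}-x^{k}_{(N)}\|\le\sum_{i\ge k}\Delta_{i+1}\to0$, which proves Theorem \ref{glo}.

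The main obstacle is the coupling in Steps 2–3. The subgradient bound involves $\Delta$ terms, whose control requires $C<1$, while the descent inequality only controls $d_k$. The choice of $\delta$ relative to $1-C$ must close the absorption. If the direct absorption fails, I would fall back to a Lyapunov-type combination $d_k+c\Delta_k$ and run the KŁ argument on that.
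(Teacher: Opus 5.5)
Your proof is correct under the hypotheses you add, but it differs from the paper's at the subgradient step. The paper takes $\operatorname{dist}(0,\partial J(x^{k}_{(N)}))\le\|\{p_{x^{k}_i}\}_{i=1}^N\|\le\rho_b d_{k-1}$ directly from Theorem~\ref{t2}. Its K\L{} recursion $d_k^2\le\frac{\rho_b}{\rho}\bigl(\phi(G(k))-\phi(G(k+1))\bigr)d_{k-1}$ therefore closes on $d_k$ alone and never uses $\alpha_{\max},\beta_{\max}<1$. That bound enters only afterwards. There the paper converts $\sum_k d_k<\infty$ into finite length via $\|x^{k+1}_{(N)}-x^{k}_{(N)}\|-\alpha_{\max}\|x^{k}_{(N)}-x^{k-1}_{(N)}\|\le\|x^{k+1}_{(N)}-z^{k}_{(N)}\|$ and its $\beta$ analogue, averaged into $c_{\max}=(\alpha_{\max}+\beta_{\max})/2<1$ after the switching index $l$.

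As you noticed, that first inequality is not actually given by Theorem~\ref{t2}. Its blocks are partial subgradients at the mixed points $(\{x^{k}_i\}_{i=1}^{j-1},x^{k}_j,\{x^{k-1}_i\}_{i=j+1}^{N})$, not at $x^{k}_{(N)}$. Your correction term repairs this, at two costs.
\begin{itemize}
\item The subgradient bound now contains $\Delta_k$, so $C<1$ must be used inside the K\L{} summation rather than as post-processing. The absorption does close, e.g.\ with $\delta=(1-C)/4$, since the resulting coefficient is $2-2\delta-\delta/(1-C)>0$.
\item Bounding $\nabla_{x_j}H(x^{k+1}_{(N)})-\nabla_{x_j}h_j(y^k_j)$ by $L(\|x^{k+1}_j-y^k_j\|+\|x^{k+1}_{(N)}-x^{k}_{(N)}\|)$ needs $\nabla H$ to be jointly Lipschitz on bounded sets, as in PALM. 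This is strictly stronger than the blockwise condition of Assumption~\ref{assump1}(i), so it is not really a ``block-Lipschitz'' correction and you should state it as an added hypothesis.
\end{itemize}
The uniform bounds on $\gamma^k_j$ and $L_{\nabla_{x^k_j}H}$ are needed tacitly by the paper too, for $\rho,\rho_b$ to be constants. Your use of the uniformized K\L{} property on $x'$ and your separate treatment of the case $J(x^K_{(N)})=J^*$ fill steps the paper skips.

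Three small fixes.
\begin{itemize}
\item In Step~3 the indices are shifted. K\L{} at $x^{k+1}_{(N)}$ pairs $\rho d_{k+1}^2$ with $\rho_b'(d_k+d_{k-1}+\Delta_k)$, not with $d_{k+1}+d_k+\Delta_{k+1}$. This is harmless, since the absorption works either way.
\item In Step~1, the effective extrapolation is eventually at most $\max(\alpha_{\max},\beta_{\max})$, but not because of repeated division by $t_1$; nothing shrinks if $t_1=1$. The correct reason is that after the first non-restart iteration following the switch, every update is capped at $\alpha_{\max},\beta_{\max}$, while restart iterations have effective extrapolation $0$. The paper's proof silently ignores this transient, since $\alpha^{l+1}$ was computed with $\overline{\alpha}=\alpha_{rapid}$.
\item Like the paper, you need $T>0$ for the switch to occur at all.
\end{itemize}
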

\begin{proof}
Since Definition \ref{def_KL}, there exists a concave function $\phi$ so that 
\begin{align}
\phi^{'}(J(x^{k}_{(N)})-J( \overline{x})) dist(0, \partial J(x^{k}_{(N)})) \geq 1. \label{KL1}
\end{align}
From $\phi$ is the convex function, we have
\begin{align}
&&&\phi(J(x^{k+1}_{(N)})-J( \overline{x}))-\phi(J(x^{k}_{(N)})-J( \overline{x})) \notag \\
&& &\leq\phi^{'}(J(x^{k}_{(N)})-J( \overline{x}))(J(x^{k+1}_{(N)})-J(x^{k}_{(N)})). 
\label{temp1}
\end{align}
From Theorem \ref{t2}, we infer 
\begin{align}
&& \hspace{-1mm}dist(0, \partial J(x^{k}_{(N)}) ) &\leq \|\left\{p_{x^{k}_{i}} \right\}^{N}_{i=1}\| \notag \\
&& &\leq \rho_{b}(\|x^{k}_{(N)}-y^{k-1}_{(N)}\|+\|x^{k}_{(N)}-z^{k-1}_{(N)}\|). \label{glo1}
\end{align}
Since the Eq. (\ref{KL1}) and Eq. (\ref{glo1}), we have 
\begin{align}
&&  \hspace{-4mm}\phi^{'}(J(x^{k}_{(N)})-J( \overline{x})) &\geq \frac{1}{dist(0, \partial J(x^{k}_{(N)}))} \notag \\
&& &\geq \frac{1}{\rho_{b}(\|x^{k}_{(N)}-y^{k-1}_{(N)}\|+\|x^{k}_{(N)}-z^{k-1}_{(N)}\|)} \label{glo2}. 
\end{align}
Let $G(k)=J(x^{k}_{(N)})-J( \overline{x})$, from  Eq. (\ref{temp1}), Eq. (\ref{glo1}) and Eq. (\ref{glo2}), we have 
  \begin{align}
&& &\hspace{-3mm}\phi(G(k))-\phi(G(k+1)) \notag \\
&& &\geq \phi^{'}(G(k))(G(k)-G(k+1))  \notag \\
&& &\geq \frac{G(k)-G(k+1)}{\rho_{b}(\|x^{k}_{(N)}-y^{k-1}_{(N)}\|+\|x^{k}_{(N)}-z^{k-1}_{(N)}\|)} \notag \\
&& &\geq \frac{\rho(\|x^{k+1}_{(N)}-y^{k}_{(N)}\|+\|x^{k+1}_{(N)}-z^{k}_{(N)}\|)^2}{\rho_{b}(\|x^{k}_{(N)}-y^{k-1}_{(N)}\|+\|x^{k}_{(N)}-z^{k-1}_{(N)}\|)}. \notag 
\end{align}
Define C=$\frac{\rho}{\rho_{b}}$, C is a constant, so we infer 
\begin{small}
  \begin{align}
    && &\hspace{-5mm} (\|x^{k+1}_{(N)}-y^{k}_{(N)}\|+\|x^{k+1}_{(N)}-z^{k}_{(N)}\|)^2  \notag \\
    && &\hspace{-5mm}\leq C (\phi(G(k))-\phi(G(k+1))) (\|x^{k}_{(N)}-y^{k-1}_{(N)}\|+\|x^{k}_{(N)}-z^{k-1}_{(N)}\|). \notag
   \end{align}
   \end{small}
Using the fact that $2ab\leq a^{2}+b^{2}$ 
  \begin{align}
    && &2 (\|x^{k+1}_{(N)}-y^{k}_{(N)}\|+\|x^{k+1}_{(N)}-z^{k}_{(N)}\|) \notag \\
    & &&\leq C (\phi(G(k))-\phi(G(k+1)))  \notag \\
    && &+\|x^{k}_{(N)}-y^{k-1}_{(N)}\|+\|x^{k}_{(N)}-z^{k-1}_{(N)}\|. \notag
   \end{align}
Sum both sides 
\begin{align}
&&  &2 \sum_{k=l+1}^{K} (\|x^{k+1}_{(N)}-y^{k}_{(N)}\|+\|x^{k+1}_{(N)}-z^{k}_{(N)}\|)  \notag \\
&&&\leq \sum_{k=l+1}^{K} (\|x^{k}_{(N)}-y^{k-1}_{(N)}\|+\|x^{k}_{(N)}-z^{k-1}_{(N)}\|)\notag \\
&& &+C(\phi(G(l+1))-\phi(G(K+1))) \notag \\
&& &=C (\phi(G(l+1))-\phi(G(K+1))) \notag \\
&& &+\sum_{k=l+1}^{K}(\|x^{k+1}_{(N)}-y^{k}_{(N)}\|+\|x^{k+1}_{(N)}-z^{k}_{(N)}\|)  \notag \\
&& &+(\|x^{l+1}_{(N)}-y^{l}_{(N)}\|+\|x^{l+1}_{(N)}-z^{l}_{(N)}\|).\label{glotemp0}
\end{align}
From Assumption \ref{assump1} and Eq. (\ref{glotemp0}),  we can get that 
\begin{align}
&& &\lim_{K\rightarrow \infty}\sum_{k=l+1}^{K} (\|x^{k+1}_{(N)}-y^{k}_{(N)}\|+\|x^{k+1}_{(N)}-z^{k}_{(N)}\|)  \notag \\
&& &\leq (\|x^{l+1}_{(N)}-y^{l}_{(N)}\|+\|x^{l+1}_{(N)}-z^{l}_{(N)}\|)+ C\phi(G(l+1))  \notag \\
&& &- \lim_{K\rightarrow \infty} C\phi(G(K+1))) \notag \\
 && & < \infty .\notag
\end{align}
Thus we have 
\begin{align}
&& \sum_{k=l+1}^{\infty} (\|x^{k+1}_{(N)}-y^{k}_{(N)}\|+\|x^{k+1}_{(N)}-z^{k}_{(N)}\|) &< \infty. \label{c7}
\end{align}
Since 
\begin{align}
&& & \|x^{k+1}_{(N)}-x^{k}_{(N)}\| -\alpha_{max} \|x^{k}_{(N)}-x^{k-1}_{(N)}\|  \
  \leq\|x^{k+1}_{(N)}-z^{k}_{(N)}\|, \notag \\
 && &\|x^{k+1}_{(N)}-x^{k}_{(N)}\| -\beta_{max} \|x^{k}_{(N)}-x^{k-1}_{(N)}\|  \
  \leq\|x^{k+1}_{(N)}-y^{k}_{(N)}\| . \label{glotemp1}
\end{align}
From Eq. (\ref{c7}) and Eq. (\ref{glotemp1}), let $l=\inf \{k:\frac{|J(x^{k+1}_{(N)})-J(x^{k}_{(N)})|}{|J(x^{0}_{(N)})|}< T \}$, we know that 
\begin{align}
&& &\sum_{k=l+1}^{\infty} (2\|x^{k+1}_{(N)}-x^{k}_{(N)}\| -(\alpha_{max}+\beta_{max}) \|x^{k}_{(N)}-x^{k-1}_{(N)}\|)  \notag \\
  && &\leq \sum_{k=l+1}^{\infty}(\|x^{k+1}_{(N)}-y^{k}_{(N)}\|+\|x^{k+1}_{(N)}-z^{k}_{(N)}\|) \notag \\ &&&< \infty. \notag
\end{align}
Therefore, define $c_{max}=\frac{\alpha_{max}+\beta_{max}}{2}$, we have 
\begin{align}
&& &\sum_{k=l+1}^{\infty} (\|x^{k+1}_{(N)}-x^{k}_{(N)}\| -c_{max} \|x^{k}_{(N)}-x^{k-1}_{(N)}\|)\notag \\
&& &\leq \sum_{k=l+1}^{\infty} (\|x^{k+1}_{(N)}-y^{k}_{(N)}\|+\|x^{k+1}_{(N)}-z^{k}_{(N)}\|)\notag \\
&& & < \infty, \label{glotemp2}
\end{align}
and 
\begin{align}
&& &\sum_{k=l+1}^{\infty} \|x^{k+1}_{(N)}-x^{k}_{(N)}\| -\sum_{k=l+1}^{\infty}c_{max} \|x^{k}_{(N)}-x^{k-1}_{(N)}\| \notag \\
&& &=\sum_{k=l+1}^{\infty} \|x^{k+1}_{(N)}-x^{k}_{(N)}\| -\sum_{k=l+1}^{\infty}c_{max} \|x^{k+1}_{(N)}-x^{k}_{(N)}\| \notag \\
&& &-c_{max} \|x^{l+1}_{(N)}-x^{l}_{(N)}\| \notag \\
&& &=\sum_{k=l+1}^{\infty} (\|x^{k+1}_{(N)}-x^{k}_{(N)}\| -c_{max} \|x^{k+1}_{(N)}-x^{k}_{(N)}\|) \notag \\
&& &-c_{max} \|x^{l+1}_{(N)}-x^{l}_{(N)}\| \notag \\
&& & =\sum_{k=l+1}^{\infty} (1-c_{max})\|x^{k+1}_{(N)}-x^{k}_{(N)}\|-c_{max} \|x^{l+1}_{(N)}-x^{l}_{(N)}\|. \label{glotemp3}
\end{align}
Let $s^{k}=\|x^{k+1}_{(N)}-x^{k}_{(N)}\|$, from Eq. (\ref{glotemp2}) and Eq. (\ref{glotemp3}), we have   
\begin{align}
&& \sum_{k=l+1}^{\infty} (1-c_{max})s^{k}-c_{max} s^{l} &=\sum_{k=l+1}^{\infty}( s^{k} -c_{max} s^{k-1}) \notag \\
&& &<\infty. \notag
\end{align}
Then we infer
\begin{align}
\sum_{k=l+1}^{\infty} (1-c_{max})s^{k}<\infty. \label{glotemp4} 
\end{align}
From  $ (1-c_{max})$ is a positive constant and Eq. (\ref{glotemp4}), we infer 
\begin{align}
\sum_{k=l+1}^{\infty}\|x^{k+1}_{(N)}-x^{k}_{(N)}\|=\sum_{k=l+1}^{\infty} s^{k}<\infty. \notag 
\end{align}
Then we have 
\begin{align}
\sum_{k=0}^{\infty}\|x^{k+1}_{(N)}-x^{k}_{(N)}\|=\sum_{k=0}^{\infty} s^{k}<\infty. \notag
\end{align}
This shows that 
\begin{align}
&& \lim_{K\rightarrow \infty} \|x^{K+p}_{(N)}-x^{K}_{(N)}\|& = \lim_{K\rightarrow \infty}\|\sum_{k=K}^{K+p-1}(x^{k+1}_{(N)}-x^{k}_{(N)})\|  \notag\\
&& &\leq \lim_{K\rightarrow \infty}\sum_{k=K}^{K+p-1}\|x^{k+1}_{(N)}-x^{k}_{(N)} \| \notag \\
&& &=\lim_{K\rightarrow \infty}\sum_{k=K}^{\infty} s^{k} \notag\\
&& &= 0.  \notag
\end{align}
This means the Theorem \ref{glo} is true.  
\end{proof}

\begin{theorem} 
(Convergence rate)  Let $\{x^{k}_{(N)}\}_{k \in \mathbb{N}}$ be the sequence generated by Algorithm \ref{IBPLtp}, the desingularizing function has the form of $~\phi(t) = \frac{C}{\theta}t^{\theta}$, with $\theta\in (0, 1], ~ C > 0$. Let $ r^{k} = |J(x^{k}_{(N)})-J^{*}|$, $J^{*}=\lim_{k \rightarrow \infty} J(x^{k}_{(N)})$. Then the following assertions hold.  \\
(\romannumeral1)  If $\theta =1$, the Algorithm \ref{IBPLtp} terminates in finite steps. \\
(\romannumeral2)  If $\theta \in [\frac{1}{2}, 1)$, then there exist an integer $k_{2}$ such that
\begin{center}
$r^{k}\leq (\frac{d_{1}C^{2}}{1+d_{1}C^{2}})^{k-k_{2}} r^{k_{2}},~{\forall k > k_{2}}$. 
\end{center}
(\romannumeral3)  If $\theta \in (0, \frac{1}{2})$, then there exist an integer $k_{3}$  such that
\begin{center}
$r^{k}\leq [\frac{C}{(k-k_{3})d_{2}(1-2\theta)}]^{\frac{1}{1-2\theta}},~{\forall k > k_{3}}$.
\end{center}
$d_{1}$ and $d_{2}$ are defined as 
\begin{align}
&& d_{1}&=(\frac{\rho_{b}^2}{\rho}), \notag \\
&& d_{2}&=\min\left\{\frac{1}{2d_{1}C}, \frac{C}{1-2\theta}(2^{\frac{2\theta-1}{2\theta-2}})r_{k_{3}}^{2\theta-1}\right\}\notag, 
\end{align}
where the definition of $\rho$ is the same as Eq. (\ref{e33}), the definition of $\rho_{b}$ is the same as Eq. (\ref{e35}). 
\label{rate}
\end{theorem}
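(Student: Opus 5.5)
The plan is to follow the standard Attouch--Bolte style rate argument. It combines the sufficient decrease of Theorem \ref{t1}(i), the subgradient bound of Theorem \ref{t2}(ii) and the KŁ inequality with $\phi(t)=\frac{C}{\theta}t^{\theta}$, so that $\phi'(t)=Ct^{\theta-1}$.

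\textbf{Step 1: a recursion for $r^k$.} By Theorem \ref{t1}(i) the sequence $J(x^k_{(N)})$ is nonincreasing and converges to $J^*$, so $r^k=J(x^k_{(N)})-J^*\ge 0$ and $r^k\to 0$. By Theorem \ref{glo} the iterates converge to some $\overline{x}$. Theorem \ref{t5} gives $J(\overline{x})=J^*$ and $\overline{x}\in crit\,J$. Hence for $k$ large, $x^k_{(N)}$ lies in the KŁ neighbourhood of $\overline{x}$.

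If $r^{k}=0$ for some $k$, then monotonicity forces $r^{k'}=0$ for all $k'\ge k$, and Eq. (\ref{e33}) forces $x^{k'+1}_{(N)}=y^{k'}_{(N)}=z^{k'}_{(N)}$. The algorithm then stalls, which is the finite-termination conclusion. Otherwise $r^k>0$ for all $k$, and the KŁ inequality, Eq. (\ref{glo1}) and Eq. (\ref{e33}) combine as follows. Write $D^k=\|x^{k}_{(N)}-y^{k-1}_{(N)}\|+\|x^{k}_{(N)}-z^{k-1}_{(N)}\|$. Then
\begin{align}
1\le \big(C (r^{k})^{\theta-1}\rho_b D^{k}\big)^2 \le C^2(r^k)^{2\theta-2}\frac{\rho_b^2}{\rho}\,(r^{k-1}-r^{k}), \notag
\end{align}
that is, $(r^k)^{2-2\theta}\le d_1C^2\,(r^{k-1}-r^k)$ with $d_1=\rho_b^2/\rho$. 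This single inequality drives all three cases.

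\textbf{Step 2: the three cases.}
\begin{itemize}
\item[(i)] For $\theta=1$ the left side equals $1$, so $r^{k-1}-r^k\ge 1/(d_1C^2)$ whenever $r^k>0$. Since $r^k\to 0$, this can hold only finitely often. Hence $r^k=0$ after finitely many steps, and Step 1 gives termination.
\item[(ii)] For $\theta\in[\frac12,1)$ we have $2-2\theta\in(0,1]$. Because $r^k\to0$, there is $k_2$ with $r^k\le 1$ for $k\ge k_2$, and then $r^k\le (r^k)^{2-2\theta}$. Thus $r^k\le d_1C^2(r^{k-1}-r^k)$, i.e. $r^k\le \frac{d_1C^2}{1+d_1C^2}r^{k-1}$. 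Iterating from $k_2$ gives the claimed geometric bound.
\item[(iii)] For $\theta\in(0,\frac12)$, set $\psi(s)=\frac{C}{1-2\theta}s^{2\theta-1}$, which is decreasing. The aim is to show $\psi(r^k)-\psi(r^{k-1})\ge d_2$ for $k>k_3$. Summing this telescopically gives $\psi(r^k)\ge (k-k_3)d_2$, and inverting yields $r^k\le[\frac{C}{(k-k_3)d_2(1-2\theta)}]^{1/(1-2\theta)}$.
\end{itemize}

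\textbf{Step 3: the increment bound in case (iii).} Fix a ratio threshold, say $(r^k)^{2\theta-2}\le 2\,(r^{k-1})^{2\theta-2}$, and split into two cases.
\begin{itemize}
\item If the threshold holds, write $\psi(r^k)-\psi(r^{k-1})=C\int_{r^k}^{r^{k-1}}s^{2\theta-2}\,ds\ge C(r^{k-1})^{2\theta-2}(r^{k-1}-r^k)$. The threshold and the recursion of Step 1 then give a lower bound of order $\frac{1}{2d_1C}$.
\item If the threshold fails, then $r^k< 2^{\frac{1}{2\theta-2}}r^{k-1}$. Hence $(r^k)^{2\theta-1}\ge 2^{\frac{2\theta-1}{2\theta-2}}(r^{k-1})^{2\theta-1}$, so $\psi(r^k)-\psi(r^{k-1})\ge \frac{C}{1-2\theta}\big(2^{\frac{2\theta-1}{2\theta-2}}-1\big)(r^{k-1})^{2\theta-1}$. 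Since $r^{k-1}\le r^{k_3}$ and $2\theta-1<0$, this is at least a constant multiple of $r_{k_3}^{2\theta-1}$.
\end{itemize}
Taking the minimum of the two lower bounds gives $d_2$.

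I expect the main obstacle to be this constant bookkeeping. The two bounds must match the stated $d_2$ exactly, including the factor $2^{\frac{2\theta-1}{2\theta-2}}$ rather than that factor minus one. It may be necessary to adjust the threshold constant to reproduce the paper's $d_2$. A second point to handle is the restart step. Eq. (\ref{e33}) holds in both branches of the algorithm, so the recursion of Step 1 is unaffected by restarts and the proof goes through.
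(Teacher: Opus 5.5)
Your approach is the paper's. The paper derives exactly your recursion $1\le d_1C^2 r_k^{2\theta-2}(r_{k-1}-r_k)$ from the KŁ inequality, Eq. (\ref{glo1}) and Eq. (\ref{e33}), and then defers the three-case analysis to Theorem 2 of \cite{li2017convergence}; your Steps 2 and 3 reconstruct that analysis, and (i), (ii) and both branches of (iii) are computed correctly.

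The one loose end is the constant in (iii), and the remedy you suggest does not work. With a threshold $(r^k)^{2\theta-2}\le M(r^{k-1})^{2\theta-2}$, the first branch yields $\frac{1}{Md_1C}$, so keeping it at least $\frac{1}{2d_1C}$ forces $M\le 2$. The second branch then yields at most $\frac{C}{1-2\theta}(2^{q}-1)r_{k_3}^{2\theta-1}$, with $q=\frac{2\theta-1}{2\theta-2}$, so the $-1$ cannot be tuned away.

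What closes the gap is that $k_3$ is only asserted to exist. Since $r^k\to 0$ and $2\theta-1<0$, you may take $k_3$ past the index where the KŁ inequality applies and so large that $\frac{C}{1-2\theta}(2^{q}-1)r_{k_3}^{2\theta-1}\ge\frac{1}{2d_1C}$. Because $2^{q}r_{k_3}^{2\theta-1}>(2^{q}-1)r_{k_3}^{2\theta-1}$, your constant and the stated $d_2$ then both equal $\frac{1}{2d_1C}$. Your telescoping $\psi(r^k)\ge\psi(r^k)-\psi(r^{k_3})\ge(k-k_3)d_2$ then gives exactly the stated bound. The case where $r^k$ reaches $0$ is trivial, and the paper excludes it at the outset.

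A smaller point concerns (i): your Step 1 claim that the algorithm stalls once $r^k=0$ does not follow from Eq. (\ref{e33}). In the non-restart branch, $x^{k+1}_{(N)}=y^{k}_{(N)}=z^{k}_{(N)}$ forces $x^{k}_{i}=x^{k-1}_{i}$ only for blocks with $\alpha^{k}_{i}\neq\beta^{k}_{i}$. When $\alpha^{k}_{i}=\beta^{k}_{i}$ it only gives $x^{k+1}_{i}-x^{k}_{i}=\beta^{k}_{i}(x^{k}_{i}-x^{k-1}_{i})$, so the iterates need not freeze. Your argument proves, and the recursion can only deliver, that $J(x^{k}_{(N)})=J^{*}$ after finitely many iterations. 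Read (i) that way and drop the stalling claim.
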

\begin{proof}
We assume that $\forall k \in \mathbb{N}$, $J(x^{k}_{(N)}) \neq J^*$. From Proposition \ref{def_uniKL}, there exists a desingular function $\phi$ such that
\begin{center}
    $\phi^{\prime}(J(x^{k}_{(N)})-J^*)dist(0,\partial J(x^{k}_{(N)}))\geq 1$.    
\end{center}
According to the above equation, from Theorem \ref{t1} and Theorem \ref{t2}, we can obtain 
\begin{align}
   && 1  &\leq (\phi^{\prime}(J(x^{k}_{(N)})-J^*)dist(0,\partial J(x^{k}_{(N)})))^2 \notag \\ 
    &&&\leq (\phi^{\prime}(r_{k}))^2\rho_{b}^2(\|x^{k}_{(N)}-y^{k-1}_{(N)}\|+\|x^{k}_{(N)}-z^{k-1}_{(N)}\|)^2 \notag \\ 
    &&&\leq (\phi^{\prime}(r_{k}))^2\rho_{b}^2 \frac{J(x^{k-1}_{(N)})-J(x^{k}_{(N)})}{\rho} \notag \\ 
    &&&=d_{1} (\phi^{\prime}(r_{k}))^2(r_{k-1}-r_{k}).
    \label{eq_rate1}
\end{align}
Since $\phi(t)=\frac{C}{\theta}t^{\theta}$, we have $\phi^{\prime}(t)=Ct^{\theta-1}$. From $\phi^{\prime}(t)=Ct^{\theta-1}$ and Eq. (\ref{eq_rate1}), we infer 
\begin{align}
   1\leq d_{1}C^2 r_{k}^{2\theta-2}(r_{k-1}-r_{k}). 
    \label{eq_rate2}
\end{align}
From Theorem \ref{glo} and Eq. (\ref{eq_rate2}), we can follow the same technique of the proof of Theorem 2 in \cite{li2017convergence} to discuss the cases $\theta=1$, $\theta\in [\frac{1}{2},1)$ and $\theta\in (0,\frac{1}{2})$, thereby establishing the result. 
\end{proof}

\section{Numerical experiments} 
\label{Experi}
In this section, we apply our method to both the sparse nonnegative matrix factorization with $\ell_0$-constraints and the sparse nonnegative CP decomposition with $\ell_0$-constraints problems. All the experiments are implemented in this configuration: 
Intel(R) Core(TM) i7-10850H CPU @ 2.70GHz, RAM	16.0 GB, Matlab 2021b. The fundamental tensor computation is based on Tensor Toolbox 3.5 \cite{bader2006algorithm}.  The code is available at  \url{https://github.com/Weifeng-Yang/IBPL}.

\begin{table*}[!ht]
\belowrulesep=0pt\aboverulesep=0pt
\centering
\caption{\centering Average results by methods on lp\_ship12l and Franz2 datasets. The best performance is highlighted in bold.}
\label{Tabmat}
\begin{tabular*}{0.999\linewidth}{p{1.6cm}| c c c|c c c} 
\hline      
\diagbox[width=5.74em]{\multirow{2}{*}{Method}}{Dataset} & \multicolumn{3}{c|}{lp\_ship12l}  & \multicolumn{3}{c}{Franz2}     \\ \cline{2-7} 
& Obj & Rel& Ranking & Obj & Rel& Ranking  \\
\midrule   
 
    iPALM & 116969.243$\pm$5718.409 & 3.797$\pm$0.091& 0 & 903794.645$\pm$17406.221 & 9.168$\pm$0.089 & 0 \\
    BPL   & 7895.702$\pm$44.705 & 0.987$\pm$0.003 & 0 & 11718.541$\pm$102.456 & 1.044$\pm$0.005 & 0\\
    IBPG  & 8254.258$\pm$83.169 & 1.009$\pm$0.005 & 0 & 15641.389$\pm$360.038 & 1.206$\pm$0.014 & 0\\
    TITAN & 8122.686$\pm$79.22 & 1.001$\pm$0.005 & 0 & 14332.043$\pm$411.796 & 1.154$\pm$0.017 & 0\\
    ABPL  & 2674.483$\pm$23.419 & 0.574$\pm$0.003 & 0 & 10045.369$\pm$32.421 & 0.967$\pm$0.002 & 0\\
    PGels & 7868.767$\pm$32.693 & 0.985$\pm$0.002 & 0 & 11746.153$\pm$106.965 & 1.045$\pm$0.005 & 0\\
    APGL  & 2331.628$\pm$22.931 & 0.536$\pm$0.003 & 0 & 9797.648$\pm$51.815 & 0.955$\pm$0.003 & 0\\
    IBPL  & 2014.515$\pm$16.306 & 0.498$\pm$0.002 & 0 & 9265.226$\pm$18.497 & 0.928$\pm$0.001 & 0\\

    IBPL$^+$& 1921.459$\pm$13.057 & 0.487$\pm$0.002 & 2  & 9072.122$\pm$10.522 & 0.919$\pm$0.001 & 1 \\  


 IBPL$^+$-TP &
 \textbf{1701.968} $\pmb{\pm}$ \textbf{14.471} &
 \textbf{0.458} $\pmb{\pm}$ \textbf{0.002} &
 \textbf{18} &
\textbf{8409.709} $\pmb{\pm}$ \textbf{11.229} &
 \textbf{0.884} $\pmb{\pm}$ \textbf{0.001} &
\textbf{19} \\
\hline 
\end{tabular*}
\subcaption{\centering Average results by methods on lp\_ship12l and Franz2 datasets with $r=300$. }

\belowrulesep=0pt\aboverulesep=0pt
\centering
\begin{tabular*}{1.008\linewidth}{p{1.6cm}| c c c|c c c} 
\hline      
\diagbox[width=5.74em]{\multirow{2}{*}{Method}}{Dataset} & \multicolumn{3}{c|}{lp\_ship12l}  & \multicolumn{3}{c}{Franz2}     \\ \cline{2-7} 
& Obj & Rel& Ranking & Obj & Rel& Ranking  \\
\midrule   
 
iPALM & 274671.909$\pm$5261.825 & 5.820$\pm$0.056 &0 &  2294144.347$\pm$48785.535 & 14.606$\pm$0.155 & 0  \\
    BPL   & 8239.175$\pm$60.148 & 1.008$\pm$0.004&0 & 14213.246$\pm$514.229 & 1.150$\pm$0.02 &  0\\
    IBPG  & 8777.91$\pm$173.528 & 1.040$\pm$0.010 &0& 24350.958$\pm$1800.05 & 1.504$\pm$0.054 &  0\\
    TITAN & 8569.892$\pm$162.056 & 1.028$\pm$0.010 &0 & 20296.587$\pm$714.018 & 1.374$\pm$0.024 & 0 \\
    ABPL  & 2987.942$\pm$56.18 & 0.607$\pm$0.006 &0& 10280.06$\pm$29.515 & 0.978$\pm$0.001 & 0 \\
    PGels & 8216.564$\pm$48.52 & 1.007$\pm$0.003 &0& 14244.869$\pm$228.494 & 1.151$\pm$0.009 & 0 \\
    APGL  & 2431.996$\pm$21.917 & 0.548$\pm$0.002 &0 & 10162.652$\pm$49.785 & 0.972$\pm$0.002 & 0 \\
    IBPL  & 1997.605$\pm$46.944 & 0.496$\pm$0.006 &1& 9386.052$\pm$25.501 & 0.934$\pm$0.001 &  1\\

     IBPL$^+$ & 1775.245$\pm$15.603 & 0.468$\pm$0.002 & 2  & 9202.255$\pm$8.378 & 0.925$\pm$0.003 & 3 \\ 
 

IBPL$^+$-TP &
\textbf{1507.042} $\pmb{\pm}$ \textbf{18.845}&
\textbf{0.431} $\pmb{\pm}$ \textbf{0.003}&
\textbf{17} &
\textbf{8476.699} $\pmb{\pm}$ \textbf{13.740} &
\textbf{0.888} $\pmb{\pm}$ \textbf{0.003} &
\textbf{16} \\
 \hline

\end{tabular*}
\subcaption{\centering Average results by methods on lp\_ship12l and Franz2 datasets with $r=400$. }

\belowrulesep=0pt\aboverulesep=0pt
\centering
\begin{tabular*}{1.008\linewidth}{p{1.6cm}| c c c|c c c} 
\hline      
\diagbox[width=5.74em]{\multirow{2}{*}{Method}}{Dataset} & \multicolumn{3}{c|}{lp\_ship12l}  & \multicolumn{3}{c}{Franz2}     \\ \cline{2-7} 
& Obj & Rel& Ranking & Obj & Rel& Ranking  \\
\midrule   
 
iPALM & 507559.417$\pm$9574.294 & 7.912$\pm$0.074 &0 & 4305136.325$\pm$88174.097 & 20.009$\pm$0.206 &  0\\
    BPL   & 8641.317$\pm$68.161 & 1.032$\pm$0.004 &0 & 18099.618$\pm$520.759 & 1.297$\pm$0.019 & 0 \\
    IBPG  & 9743.509$\pm$282.52 & 1.096$\pm$0.016 &0 & 41373.688$\pm$2043.449 & 1.961$\pm$0.048 & 0 \\
    TITAN & 9150.735$\pm$238.632 & 1.062$\pm$0.014 &0 & 32044.524$\pm$2323.143 & 1.725$\pm$0.064 & 0 \\
    ABPL  & 3162.031$\pm$59.733 & 0.624$\pm$0.006 &0 & 10400.602$\pm$15.113 & 0.984$\pm$0.001 &  0\\
    PGels & 8627.905$\pm$55.228 & 1.032$\pm$0.003 &0 & 18601.769$\pm$367.712 & 1.315$\pm$0.013 &  0\\
    APGL  & 2521.159$\pm$22.137 & 0.558$\pm$0.002 &0 & 10342.989$\pm$24.04 & 0.981$\pm$0.001 &  0\\
    IBPL  & 2021.258$\pm$30.533 & 0.499$\pm$0.004 &1 & 9448.546$\pm$45.143 & 0.937$\pm$0.002 &  0\\

 IBPL$^+$ & 1688.248$\pm$21.81 & 0.456$\pm$0.003 & 3  & 9174.862$\pm$24.834 & 0.924$\pm$0.001 & 2 \\ 


IBPL$^+$-TP &
\textbf{1474.000} $\pmb{\pm}$ \textbf{29.00} &
\textbf{0.426} $\pmb{\pm}$ \textbf{0.004} &
\textbf{16} &
\textbf{8554.791} $\pmb{\pm}$ \textbf{27.879} &
\textbf{0.892} $\pmb{\pm}$ \textbf{0.002} &
\textbf{18} \\

\hline 

\end{tabular*}
\caption{\centering Average results by methods on lp\_ship12l and Franz2 datasets with $r=500$.}
\end{table*}

\begin{table*}[!t]
\belowrulesep=0pt\aboverulesep=0pt
\centering
\caption{\centering  Average results by methods on BASEHOCK dataset. The best performance is highlighted in bold.}
\label{Tabmat2}
\belowrulesep=0pt\aboverulesep=0pt
\centering
\begin{tabular*}{1.01\linewidth}{p{1.6cm}| c c c|c c c} 
\hline      
\diagbox[width=5.74em]{\multirow{2}{*}{Method}}{Rank} & \multicolumn{3}{c|}{r=300}  & \multicolumn{3}{c}{r=400}     \\ \cline{2-7}
& Obj & Rel& Ranking & Obj & Rel& Ranking  \\
\midrule

    iPALM & 5.573e+05$\pm$7.391e+03 & 1.305$\pm$0.009 & 0 & 9.454e+05$\pm$1.676e+04 & 1.699$\pm$0.015 & 0 \\
    
    BPL   & 1.679e+05$\pm$2.364e+03 & 0.716$\pm$0.005 & 0 & 1.957e+05$\pm$4.175e+03 & 0.773$\pm$0.008 & 0 \\
    
    IBPG  & 2.061e+05$\pm$3.116e+03 & 0.793$\pm$0.006 & 0 & 2.300e+05$\pm$4.224e+03 & 0.838$\pm$0.008 & 0  \\
    
    TITAN & 2.015e+05$\pm$4.225e+03 & 0.784$\pm$0.008 & 0 & 2.191e+05$\pm$3.393e+03 & 0.818$\pm$0.006  & 0 \\
    
    ABPL  & 1.295e+05$\pm$1.576e+03 & 0.629$\pm$0.004 & 0 & 1.421e+05$\pm$1.948e+03 & 0.659$\pm$0.005 & 0 \\
    
    PGels & 1.678e+05$\pm$2.033e+03 & 0.716$\pm$0.004 & 0 & 1.945e+05$\pm$3.217e+03 & 0.771$\pm$0.006 & 0 \\
    
    APGL  & 8.919e+04$\pm$1.253e+03 & 0.522$\pm$0.004 & 0 & 9.859e+04$\pm$1.546e+03 & 0.549$\pm$0.004  & 0 \\
    
    IBPL  & 7.743e+04$\pm$1.706e+03 & 0.486$\pm$0.005 & 2 & 8.491e+04$\pm$2.144e+03 & 0.509$\pm$0.006 & 0 \\
    
    IBPL$^+$ & 7.714e+04$\pm$1.077e+03 & 0.485$\pm$0.003 & 2 & 8.104e+04$\pm$1.269e+03 & 0.497$\pm$0.004  & 2\\
    
    IBPL$^+$-TP & \textbf{6.682e+04} $\pmb{\pm}$ \textbf{1.144e+03} & \textbf{0.452} $\pmb{\pm}$ \textbf{0.003} & \textbf{16} & \textbf{7.335e+04} $\pmb{\pm}$ \textbf{3.719e+03} & \textbf{0.473} $\pmb{\pm}$ \textbf{0.011} & \textbf{18} \\

\hline 
\end{tabular*}

\vspace{5pt}

\centering
\begin{tabular*}{0.5625\linewidth}{p{1.6cm}| c c c} 
\hline      
\diagbox[width=5.74em]{\multirow{2}{*}{Method}}{Rank} & \multicolumn{3}{c}{r=500}      \\ \cline{2-4}
& Obj & Rel& Ranking \\
\midrule   
 
    iPALM  & 1.547e+06$\pm$4.955e+04 & 2.173$\pm$0.035 &   0\\
  
    BPL   & 2.132e+05$\pm$2.707e+03 & 0.807$\pm$0.005 &  0\\
    
    IBPG  & 2.465e+05$\pm$6.540e+03 & 0.868$\pm$0.011 & 0 \\
    
    TITAN  & 2.384e+05$\pm$5.668e+03 & 0.853$\pm$0.010 &  0\\
    
    ABPL    & 1.500e+05$\pm$2.123e+03 & 0.677$\pm$0.005 &   0\\
    
    PGels  & 2.124e+05$\pm$2.019e+03 & 0.805$\pm$0.004 &  0 \\
    
    APGL   & 1.055e+05$\pm$1.376e+03 & 0.567$\pm$0.004 &   0\\
    
    IBPL  & 8.960e+04$\pm$1.908e+03 & 0.523$\pm$0.006 &   0\\
    
    IBPL$^+$  & 8.566e+04$\pm$2.225e+03 & 0.511$\pm$0.007 &    1\\
    
    IBPL$^+$-TP  & \textbf{7.564e+04} $\pmb{\pm}$ \textbf{4.160e+03} & \textbf{0.481} $\pmb{\pm}$ \textbf{0.008} &  \textbf{19} \\
 
\hline 
\end{tabular*}

\end{table*}

\begin{figure*}[!t]
    \centering 
    \includegraphics[width=0.33\linewidth]{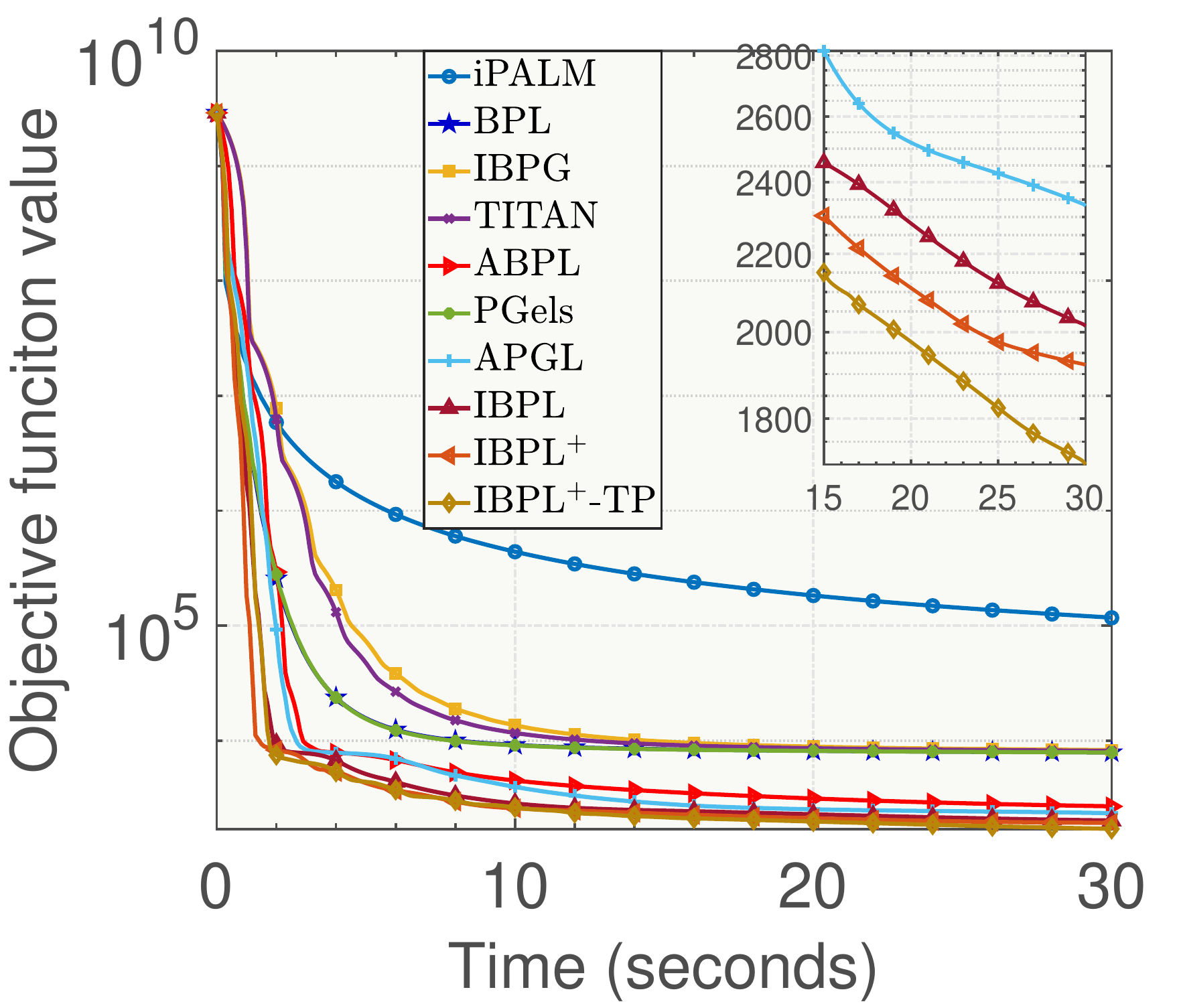}
    \includegraphics[width=0.33\linewidth]{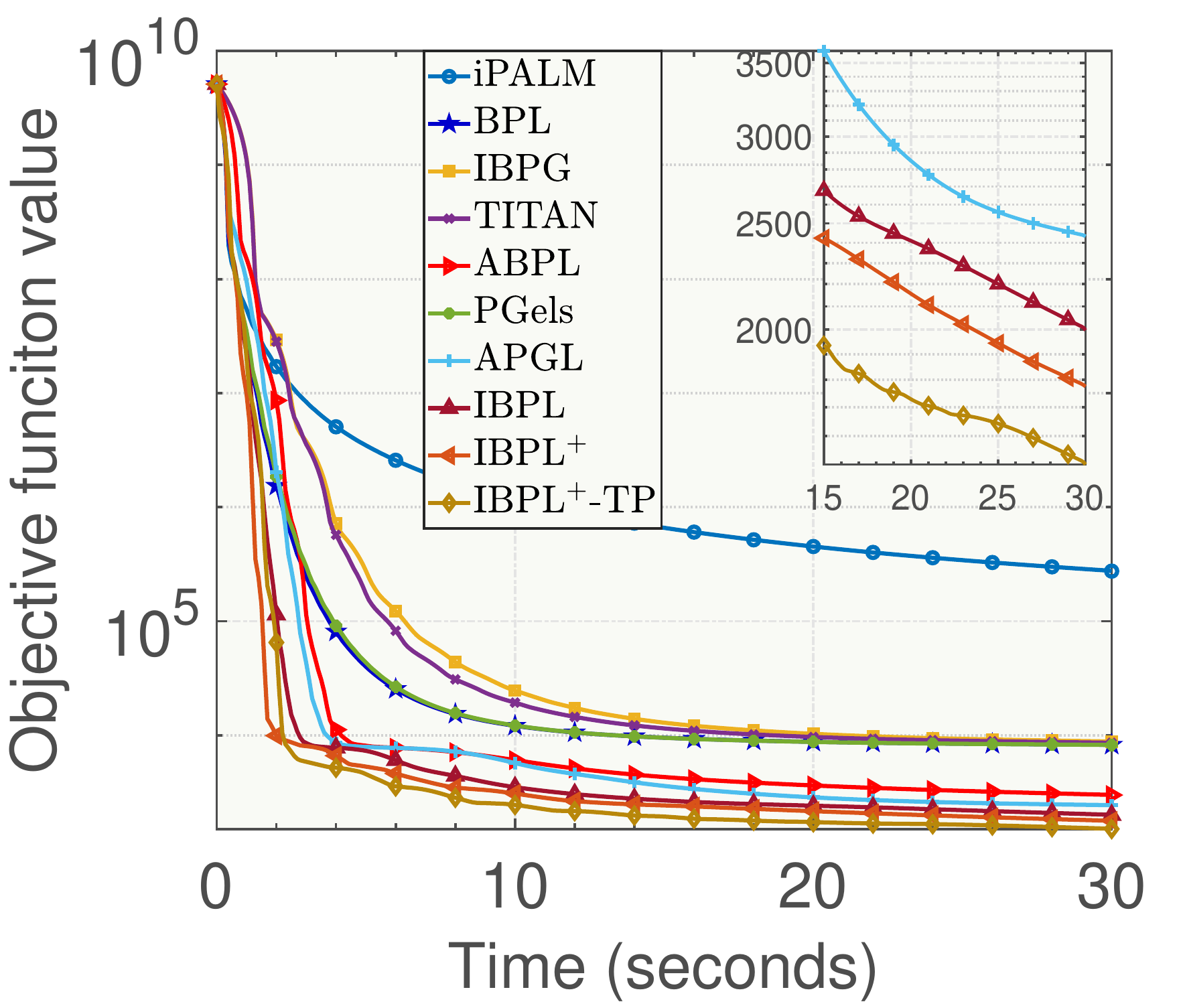}
    \includegraphics[width=0.33\linewidth]{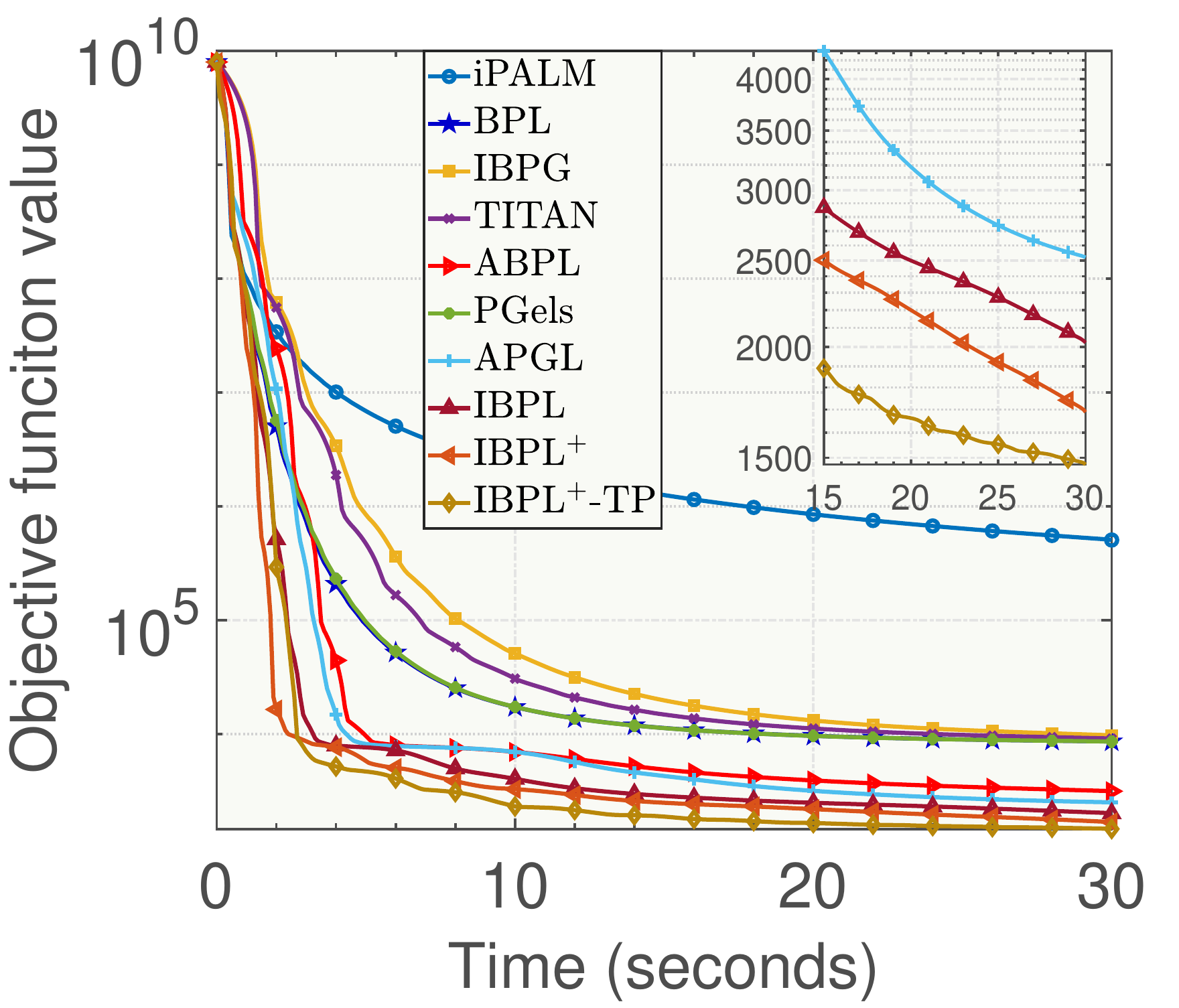}
    \caption{ 
    Average convergence behavior of all methods on lp\_ship12l dataset with  $r=300~$(left), $r=400~$(middle) and $r=500~$(right). 
    A partially enlarged view of several curves with the fastest descending speed and the best effect is also provided. 
    }\label{figmat1}
\end{figure*}

\begin{figure*}[!ht]
    \centering 
    \includegraphics[width=0.33\linewidth]{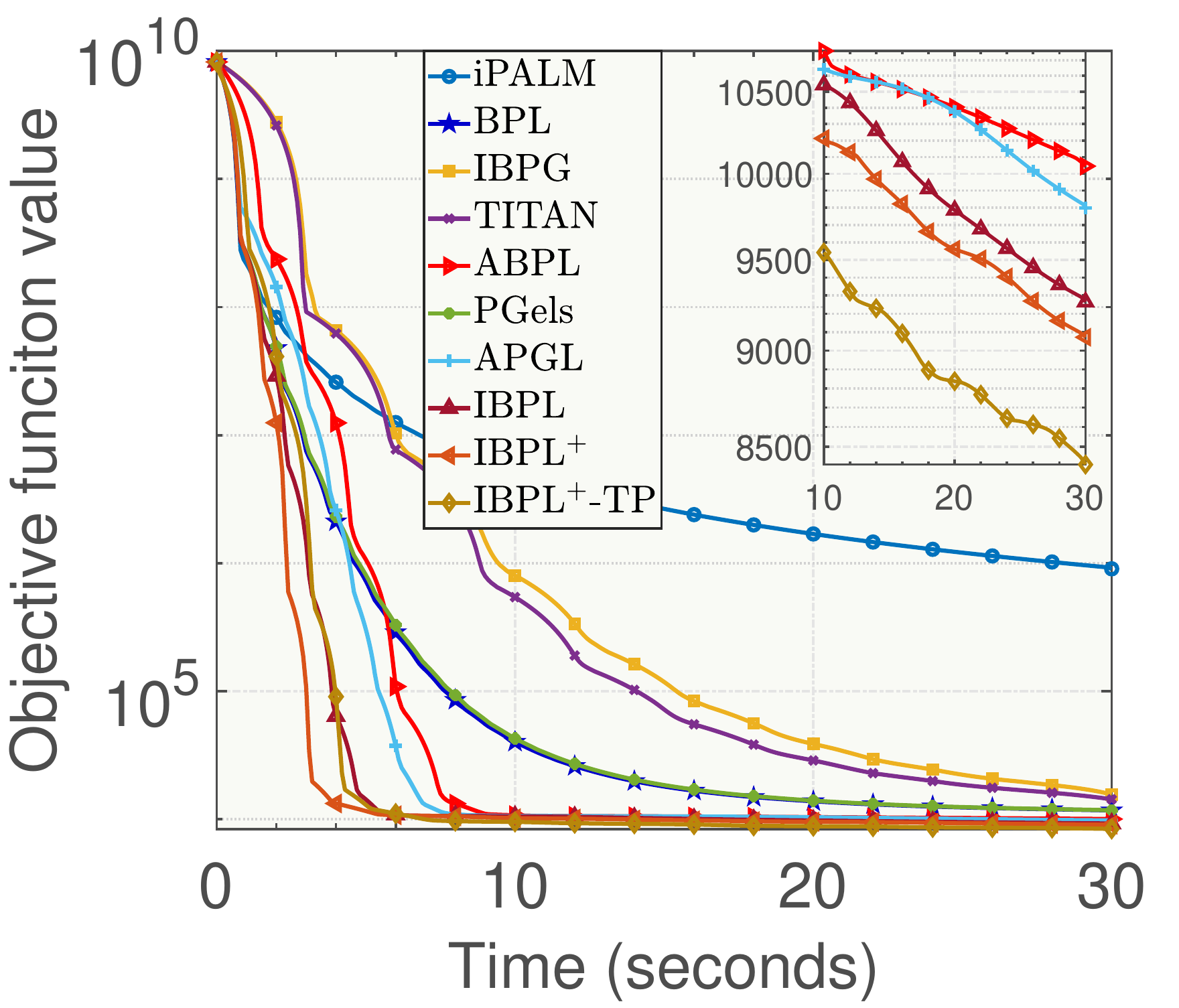}
    \includegraphics[width=0.33\linewidth]{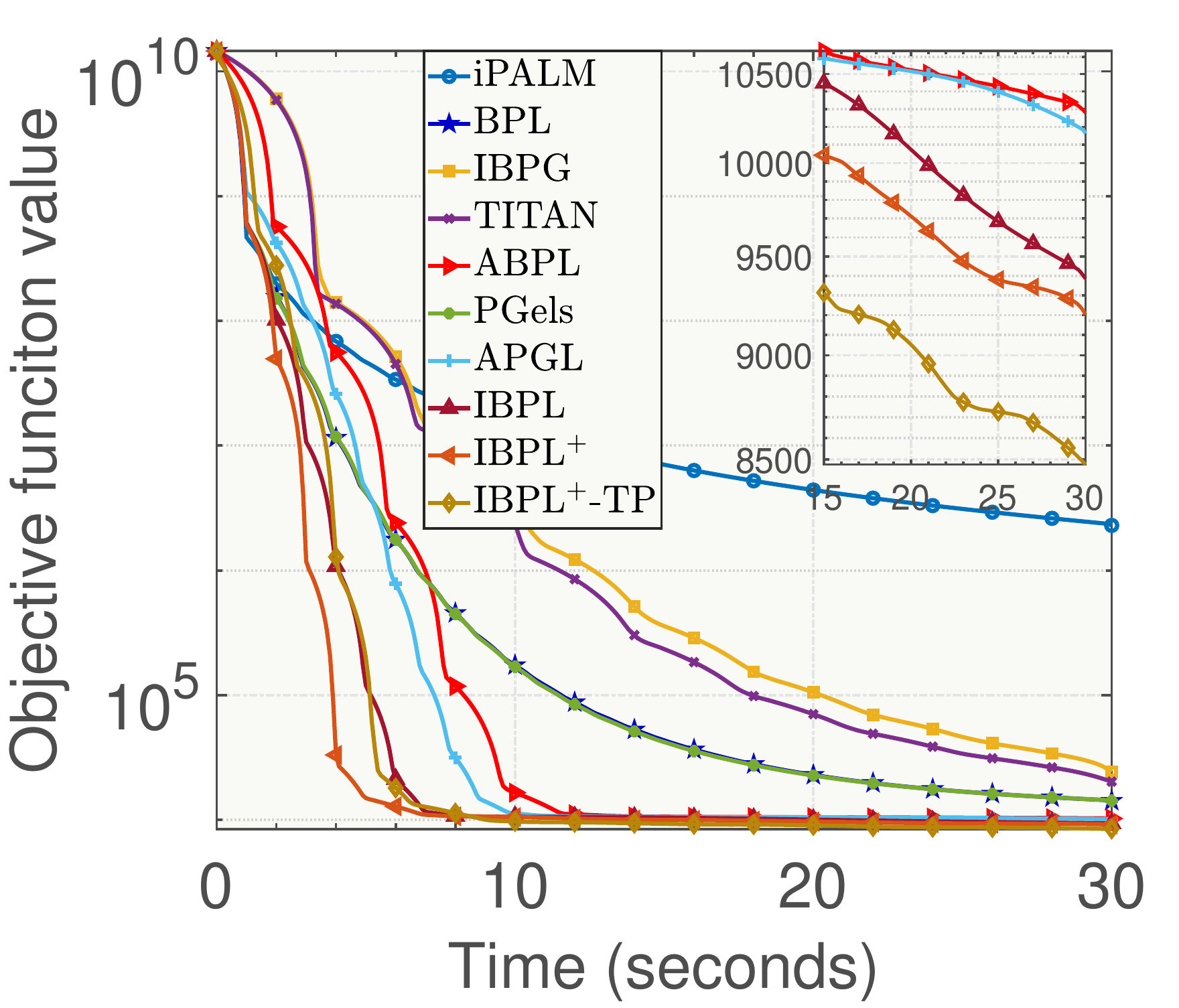}
    \includegraphics[width=0.33\linewidth]{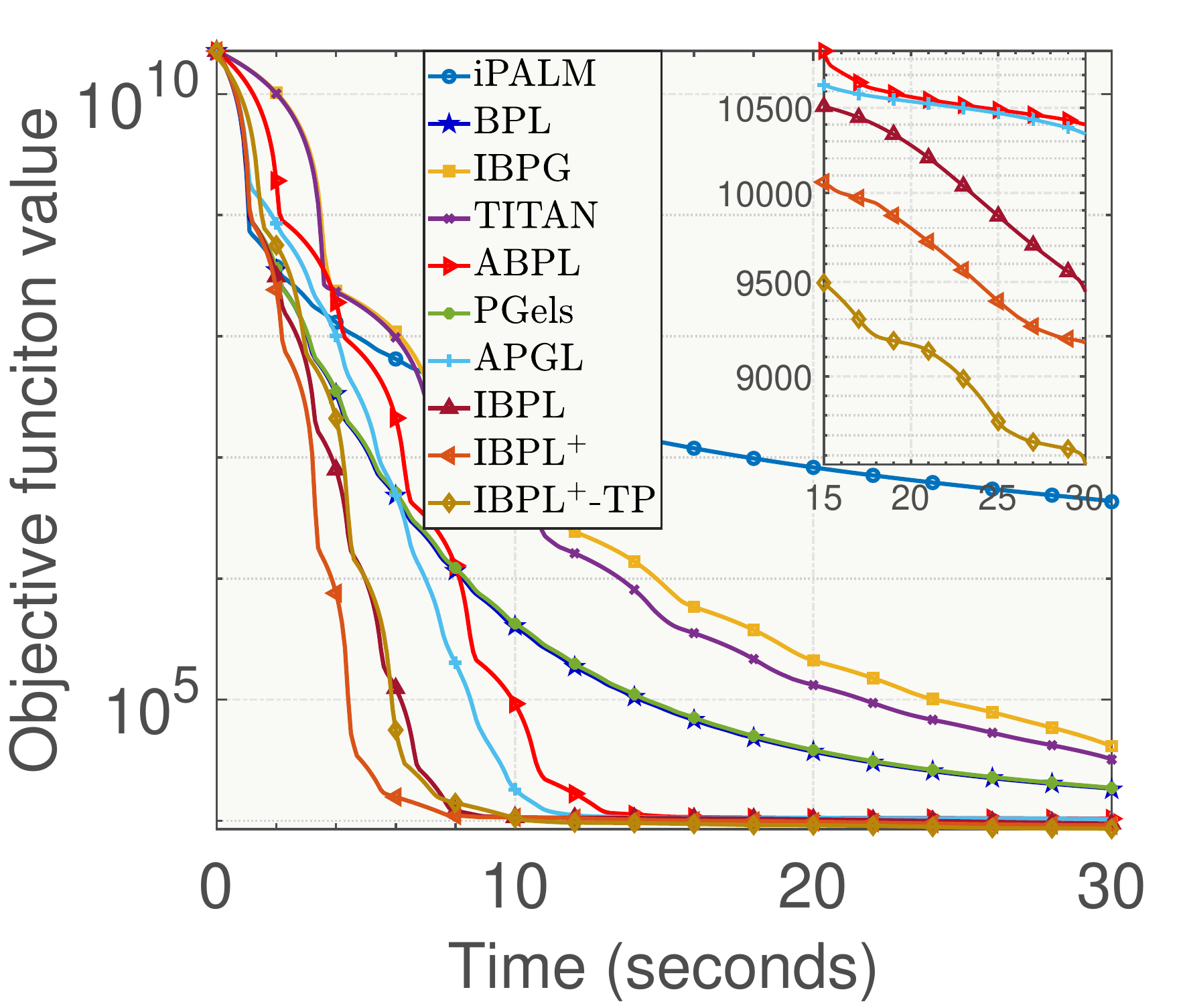}
    \caption{
     Average convergence behavior of all methods on Franz2 dataset with  $r=300~$(left), $r=400~$(middle) and $r=500~$(right). A partially enlarged view of several curves with the fastest descending speed and the best effect is also provided. 
    }\label{figmat2}
\end{figure*}

\begin{figure*}[!ht]
    \centering 
    \includegraphics[width=0.33\linewidth]{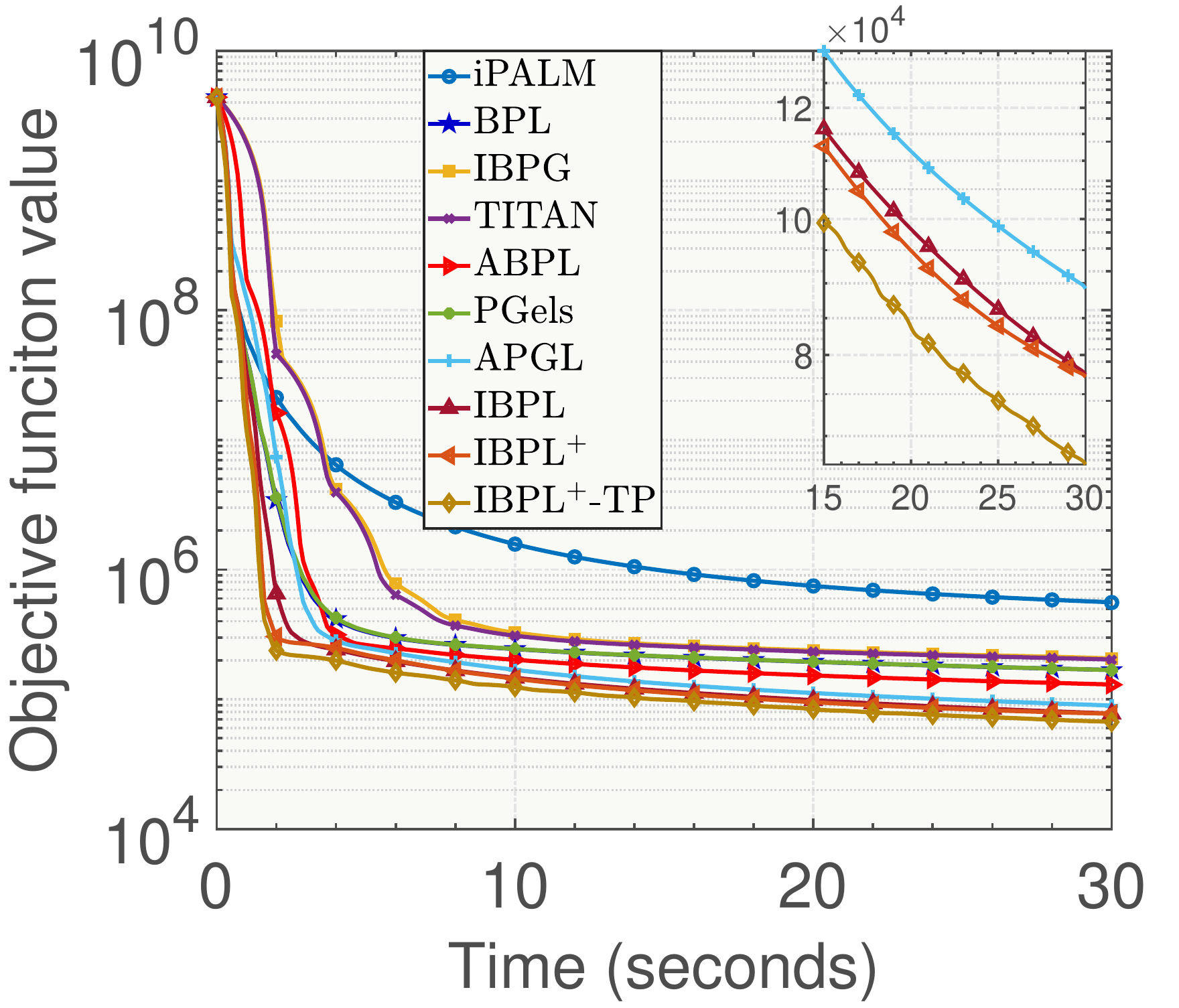}
    \includegraphics[width=0.33\linewidth]{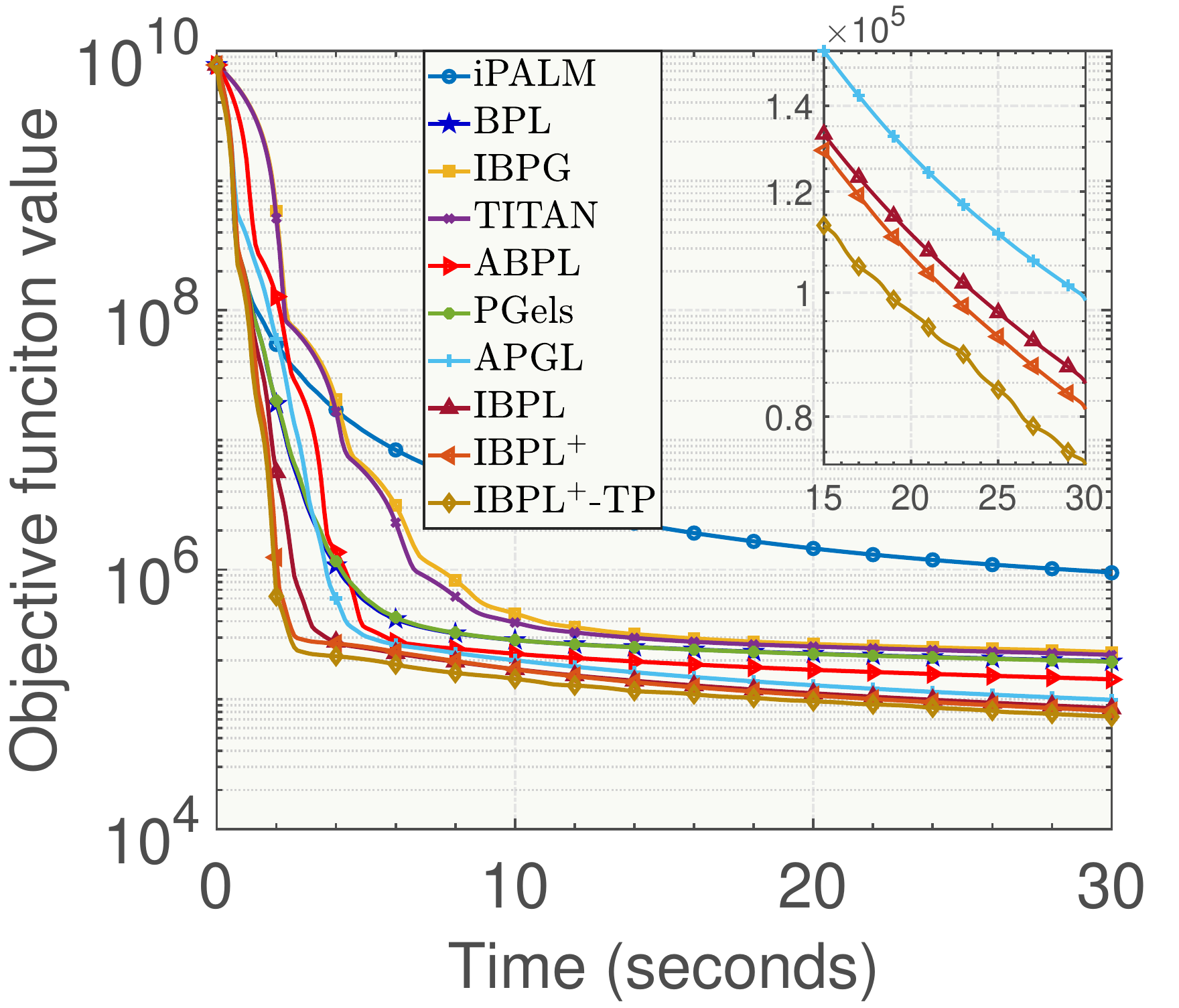}
    \includegraphics[width=0.33\linewidth]{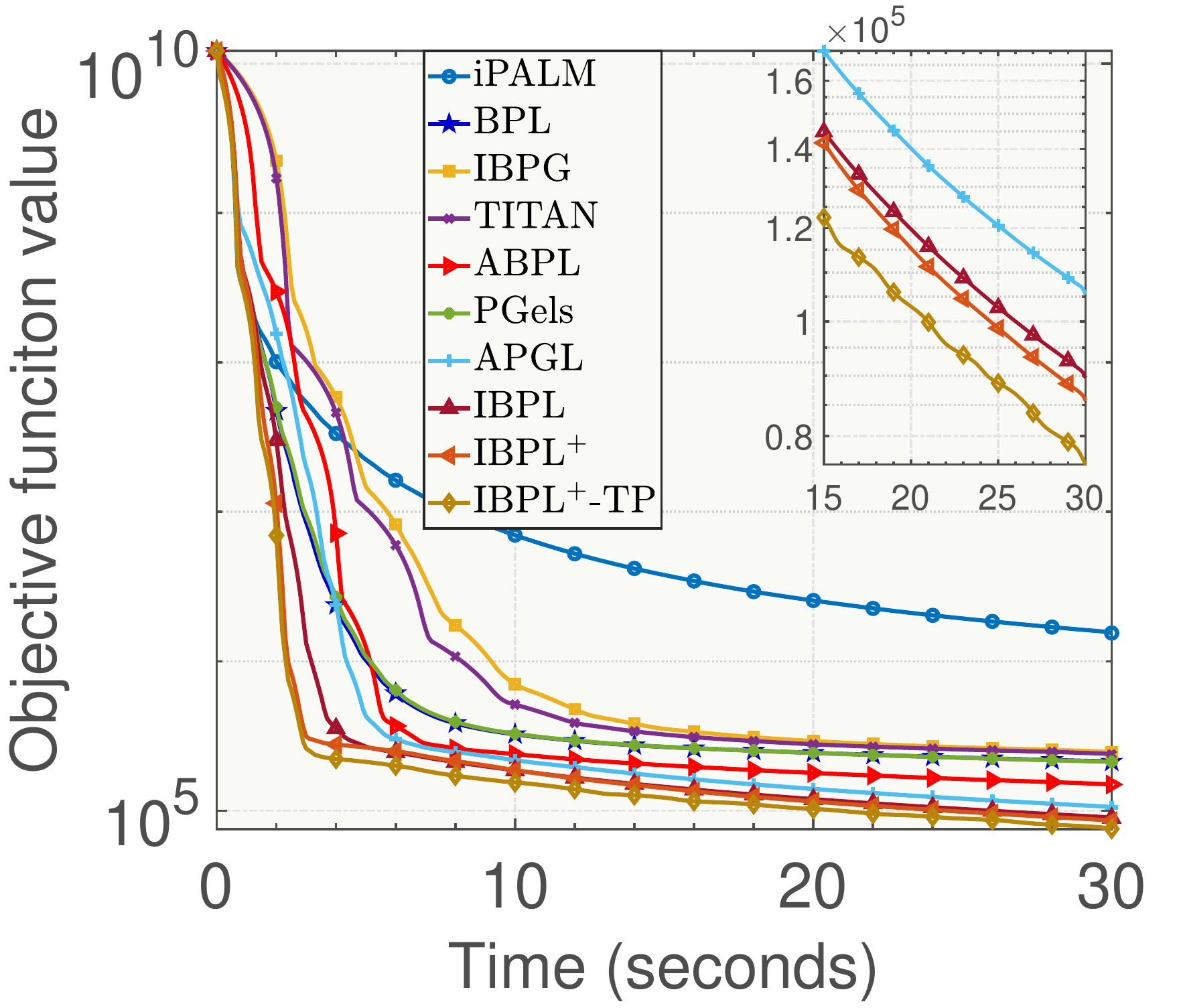}
    \caption{ 
     Average convergence behavior of all methods on BASEHOCK dataset with  $r=300~$(left), $r=400~$(middle) and $r=500~$(right). A partially enlarged view of several curves with the fastest descending speed and the best effect is also provided. 
    }\label{figmat3}
\end{figure*}

\subsection{Proposed Methods and Baseline Methods}
\label{subsection: asup}  
We first propose a transformation form of the proposed method to test the effectiveness of adaptive momentum in the proposed method.  

\begin{itemize} 
    \item [1)]IBPL: it is a version of IBPL$^+$ with non-adaptive momentum, which means that the extrapolation parameters $\beta^{k}_{i}$ are set as $t_{1}=1,~t_{k+1}=\frac{1+\sqrt{1+4t_{k}^{2}}}{2}, ~ \beta^{k}_{i}=\frac{t_{k}-1}{t_{k+1}}$ and the extrapolation parameters $\alpha^{k}_{i}$ are set as $\alpha^{k}_{i}=\min(1.03\beta^{k}_{i},\alpha_{max}) $.  

\end{itemize}

Furthermore, to demonstrate the effectiveness of our method, we compare our method and its transformation form with the following methods for solving Eq. (\ref{e11}).   
\begin{itemize} 
    
    \item [1)] iPALM  \cite{pock2016inertial}:  The inertial Proximal Alternating Linearized Minimization (iPALM) method. This method cannot guarantee that Eq. (\ref{e11}) decreases monotonically. 
    
    \item [2)] BPL  \cite{xu2017globally}:  Randomized/deterministic block prox-linear (BPL) method. 

    \item [3)] IBPG  \cite{le2020inertial}:  Inertial block proximal gradient (IBPG) method. This method cannot guarantee that Eq. (\ref{e11}) decreases monotonically.  

    \item [4)] TITAN  \cite{phan2023inertial}: inerTIal block majorizaTion minimizAtioN (TITAN) method.  

    \item [5)] ABPL  \cite{yang2023accelerated}:  Randomized/deterministic accelerated block proximal linear method with adaptive momentum. This method can only ensure the independence of the extrapolation parameters when only a single extrapolation point is used.

    \item [6)] PGels \cite{yang2024proximal}: Proximal gradient method with extrapolation and line search.

    \item [7)] APGL \cite{yang2025sparse}: Alternating Proximal Gradient Linearized method.  This method provides a more flexible version of the ABPL method that simplifies the two restart steps required in ABPL to just a single restart step.

\end{itemize}
All parameters involved in the compared methods are set as suggested in the reference papers. 
Notably, although there are several ADMM algorithms \cite{wang2019global,bai2025inexact,li2025proximal} that can also solve a class of multiblock nonconvex and nonsmooth problems,  the forms of the problems solved by these ADMM algorithms are completely different from Eq. (\ref{e11}).  
Similarly, while there are also some nonsmooth optimization algorithms \cite{jiang2025inexact,lyaqini2024non,xiao2024adam} for solving nonsmooth optimization problems, most of them can only solve convex problems and the forms of the problems solved by these nonsmooth optimization algorithms are also completely different from Eq. (\ref{e11}). 
Therefore, these ADMM algorithms and nonsmooth optimization algorithms are out of the scope of this paper.

\subsection{Sparse nonnegative matrix factorization with $\ell_0$-constraints ($\ell_0$-SNMF)} 
\subsubsection{$\ell_0$-SNMF model}
The NMF (nonnegative matrix factorization) model is an important tool for extracting hierarchical features, and the sparsity of the solution of NMF can force the model to extract only significant features while filtering out redundant information in the data \cite{li2025superpixel,yang2024low}.  
The sparsity is commonly achieved by using norm regularization terms, and the $\ell_{0}$ norm is the most intuitive representation of sparsity \cite{weston2003use,shi2022cardinality}. However, optimizing the Sparse NMF with $\ell_0$ norm constraints ($\ell_0$-SNMF) is known to be NP-hard, nonconvex and nonsmooth. 
To address this challenge, we will design a practical convergent scheme for solving $\ell_0$-SNMF based on our method and apply this scheme to solve the problem.

Mathematically, $\ell_0$-SNMF can be presented as follows. 
\begin{equation}
\begin{aligned}
&&  ~\min \limits_{U,V} &\frac{1}{2}\|X-UV\|_{F}^{2},\\
&& s. t. ~  U,V &\geq 0,~\|U\|_{0} \leq s_{1},~\|V\|_{0} \leq s_{2},   
\end{aligned}
\label{e41}
\end{equation}
where $X \in \mathbb{R}^{m\times n}$, $U\in\mathbb{R}^{m\times r}$ and $V\in\mathbb{R}^{r\times n}$.

\subsubsection{Solving $\ell_0$-SNMF using IBPL$^+$-TP}
If we write Eq. (\ref{e41}) in the form of Eq. (\ref{e11}), then we have 
\begin{align}
&&H(U,V)&=\frac{1}{2}\|X-UV\|_{F}^{2}, \notag \\
&& F_{1}(U)=\delta_{1}(U),~ & F_{2}(V)=\delta_{2}(V),\label{smNMFEq}
\end{align}
where
\begin{equation}
\delta_{i}(x)=\left\{
\begin{aligned}
0&, \quad x\geq 0,~\|x\|_{0} \leq s_{i}, \\
\infty&,\quad else.
\end{aligned}
\right. 
\label{delta}
\end{equation}
It is easy to see that $\nabla_{U} H(U,V)$ and $\nabla_{V} H(U,V)$ are 
\begin{align}
&& \nabla_{U} H(U,V)&=UVV^{T}-XV^{T}, \notag \\ 
&& \nabla_{V} H(U,V)&=U^{T}UV-U^{T}X.\notag
\end{align}
Therefore, the Lipschitz constant of $\nabla_{U} H(U,V)$ and $\nabla_{V} H(U,V)$ are 
\begin{align}
&& L(U)&=\|VV^T\|_{F},~L(V)=\|UU^T\|_{F}. \notag
\end{align}

Additionally, Frobenius norm satisfies Definition \ref{def_KL} \cite{bolte2014proximal}, Eq. (\ref{delta}) is a proper and lower semicontinuous KŁ function \cite{attouch2013convergence,bolte2014proximal}. Thus, Eq. (\ref{smNMFEq}) satisfies Assumption \ref{assump1}, and it is easy to verify that the proximal operator (Eq. (\ref{iprox})) for solving the $\ell_0$-SNMF can be expressed as follows. 
\begin{align} 
 && U^{k+1} &\in prox_{\sigma_{1}^{k} F_{1}}(B^{k}_{1}) \notag \\
 && &\in \operatorname*{\arg\min} \limits_{A} \left\{\frac{1}{2\sigma_{1}^{k}}\|A-B^{k}_{1}\|_{F}^{2}+F_{1}(A) \right\} \notag \\  
&& &\in  \operatorname*{\arg\min} \limits_{A}
 \left\{\|A-B^{k}_{1}\|_{F}^{2}: A\geq 0,~ \|A\|_{0}\leq s_{1}\right\}, \notag  \\
  && V^{k+1} &\in prox_{\sigma_{2}^{k} F_{2}}(B^{k}_{2}) \notag \\
 && &\in \operatorname*{\arg\min} \limits_{A} \left\{\frac{1}{2\sigma_{2}^{k}}\|A-B^{k}_{2}\|_{F}^{2}+F_{2}(A) \right\} \notag \\  
&& &\in  \operatorname*{\arg\min} \limits_{A}
 \left\{\|A-B^{k}_{2}\|_{F}^{2}: A\geq 0,~ \|A\|_{0}\leq s_{2}\right\}, \notag  
\end{align}
where 
\begin{center}
$B^{k}_{i}=z^{k}_{i}-\sigma_{i}^{k} \nabla_{x_{i}} H(\{x^{k+1}_{j}\}_{j=1}^{i-1},y^k_{i},\{x^{k}_{j}\}_{j=i+1}^{N})$, 
$ \frac{1}{\sigma_{1}^{k}}=\gamma^{k}_{1}L(U^k),~\frac{1}{\sigma_{2}^{k}}=\gamma^{k}_{2}L(V^k), ~ \gamma^{k}_{i}>1$.    
\end{center}

\subsubsection{Numerical results}
\label{NMFNum}
In this experiment, we test the methods on the datasets from  SuiteSparse\footnote{\url{https://sparse.tamu.edu/}} \cite{10.1145/2049662.2049663}:  lp\_ship12l ($\mathbb{R}^{1153\times5533}$) and Franz2 ($\mathbb{R}^{4032\times4480}$), which are widely used sets of sparse matrix benchmarks, and we also test the methods on the BASEHOCK\footnote{\url{https://jundongl.github.io/scikit-feature/datasets.html}} dataset ($\mathbb{R}^{1993\times4862}$) \cite{lang1995learning}, which is a benchmark text dataset in machine learning and data mining.

For parameter settings, we set the initial parameters as  
$t_{2}=1.1, ~ \beta^{1}_{i}=0.6,~ \alpha^{k}_{i}=\min(1.03\beta^{k}_{i},\alpha_{max}), ~ \alpha_{max}=\beta_{max}=0.9999, ~ \gamma^{k}_{i}=1.01,~\rho_{1}=10^{-5}$, $T=10^{-3}$, 
and the number of non-zero elements in each matrix cannot exceed $30\%$ of the total number of elements. The initial matrix is generated by the uniform distribution. For evaluation metrics, we define Obj as the objective function value, relative error (Rel) as Rel$=\frac{\|X-UV\|_{F}}{\|X\|_{F}}$. $Ranking$ is defined as the number of times that the corresponding method obtains the lowest relative error among all methods across all independent runs. We adopt these metrics to quantitatively describe the numerical performance of all methods in this experiment.

We define the maximum running time as $t_{max}$ (s) and set $t_{max}=30$ in this experiment. 
Table \ref{Tabmat} and Table \ref{Tabmat2} report the average results over 20 independent runs with $r$ varying among $\{300,400,500\}$ on these datasets. Figure \ref{figmat1}, Figure \ref{figmat2} and Figure \ref{figmat3} also record the evolution of average objective function value over 20 independent runs with respect to time on these datasets.  
From Table \ref{Tabmat}, Table \ref{Tabmat2},  Figure \ref{figmat1}, Figure \ref{figmat2} and Figure \ref{figmat3}, we observe that  
\begin{itemize}

    \item [1)] \textit{IBPL outperforms all other methods except IBPL$^+$ and IBPL$^+$-TP in all cases}: 
    This shows that using two different extrapolation points and the independence of extrapolation parameters of these extrapolation points can effectively enhance the numerical performance. 

    \item [2)] \textit{IBPL$^+$ outperforms all other methods except IBPL$^+$-TP in all cases}: 
    This demonstrates that when using two different extrapolation points with independent extrapolation parameters, incorporating an adaptive momentum update strategy for the extrapolation parameters into the proposed method can further accelerate convergence and improve numerical performance. 

    \item [3)] \textit{IBPL$^+$-TP remarkably outperforms all other methods in all cases}: 
    This not only reaffirms the benefits of using two different extrapolation points with independent extrapolation parameters and the adaptive momentum update strategy for the extrapolation parameters, but also shows that introducing a two-phase adaptive momentum update strategy, which explicitly separates the rapid descent phase from the steady refinement phase to update the extrapolation parameters effectively, provides an additional boost in convergence speed and numerical robustness, making IBPL$^+$-TP the most effective method overall.
     
\end{itemize}



\begin{table*}[!t]
\belowrulesep=0pt\aboverulesep=0pt
\centering
\caption{\centering Average results by methods on BreastMNIST and PosteriorEmission datasets. The best performance is highlighted in bold.}
\label{Tabten}
\begin{tabular*}{1.010\linewidth}{p{1.6cm}| c c c|c c c} 
\hline      
\diagbox[width=5.74em]{\multirow{2}{*}{Method}}{Dataset} & \multicolumn{3}{c|}{BreastMNIST}  & \multicolumn{3}{c}{PosteriorEmission}     \\ \cline{2-7} 
& Obj & Rel& Ranking & Obj & Rel& Ranking  \\
\midrule   
 
    iPALM & 3.923e+10$\pm$3.990e+09 & 0.442$\pm$0.023 & 0 & 1.910e+08$\pm$5.426e+06 & 0.715$\pm$0.010 &  0\\
    BPL   & 1.909e+10$\pm$9.550e+08 & 0.309$\pm$0.008 & 0& 1.587e+08$\pm$6.439e+06 & 0.652$\pm$0.013 &  0\\
    IBPG  & 1.751e+10$\pm$8.503e+08 & 0.296$\pm$0.007 & 0& 1.526e+08$\pm$3.680e+06 & 0.639$\pm$0.008 & 0 \\
    TITAN & 1.756e+10$\pm$1.042e+09 & 0.296$\pm$0.009 & 0& 1.498e+08$\pm$2.779e+06 & 0.633$\pm$0.006 &  0\\
    ABPL  & 1.826e+10$\pm$1.018e+09 & 0.302$\pm$0.008 & 0& 1.559e+08$\pm$4.424e+06 & 0.646$\pm$0.009 &  0\\
    PGels & 1.906e+10$\pm$5.450e+08 & 0.308$\pm$0.004 & 0& 1.596e+08$\pm$5.878e+06 & 0.654$\pm$0.012 &  0\\
    APGL  & 1.566e+10$\pm$2.307e+08 & 0.280$\pm$0.002 & 0& 1.484e+08$\pm$2.265e+06 & 0.630$\pm$0.005 &  0\\
    IBPL  & 1.550e+10$\pm$2.404e+08 & 0.278$\pm$0.002 & 0& 1.463e+08$\pm$2.443e+06 & 0.626$\pm$0.005 &  1\\

 IBPL$^+$ & 1.492e+10$\pm$4.234e+08 & 0.273$\pm$0.004 & 2  & 1.447e+08$\pm$1.192e+06 & 0.623$\pm$0.003 & 3 \\  


 IBPL$^+$-TP &
\textbf{1.340e+10} $\pmb{\pm}$ \textbf{2.074e+08} &
 \textbf{0.259} $\pmb{\pm}$ \textbf{0.002} &
\textbf{18} &
 \textbf{1.384e+10} $\pmb{\pm}$ \textbf{1.287e+08} &
 \textbf{0.609} $\pmb{\pm}$ \textbf{0.003} &
 \textbf{16} \\
\hline 
\end{tabular*}
\subcaption{\centering Average results by methods on BreastMNIST and PosteriorEmission datasets with $R=50$.}

\belowrulesep=0pt\aboverulesep=0pt
\centering
\begin{tabular*}{1.010\linewidth}{p{1.6cm}| c c c|c c c} 
\hline      
\diagbox[width=5.74em]{\multirow{2}{*}{Method}}{Dataset} & \multicolumn{3}{c|}{BreastMNIST}  & \multicolumn{3}{c}{PosteriorEmission}     \\ \cline{2-7}
& Obj & Rel& Ranking & Obj & Rel& Ranking  \\
\midrule   
 
    iPALM & 3.786e+10$\pm$4.324e+09 & 0.434$\pm$0.024& 0 & 1.936e+08$\pm$4.403e+06 & 0.720$\pm$0.008 &  0\\
    BPL   & 1.891e+10$\pm$1.450e+09 & 0.307$\pm$0.012& 0 & 1.600e+08$\pm$8.465e+06 & 0.654$\pm$0.017 &  0\\
    IBPG  & 1.764e+10$\pm$1.074e+09 & 0.297$\pm$0.009& 0 & 1.531e+08$\pm$4.174e+06 & 0.640$\pm$0.009 &  0\\
    TITAN & 1.716e+10$\pm$1.310e+09 & 0.293$\pm$0.011& 0 & 1.524e+08$\pm$5.495e+06 & 0.639$\pm$0.011 &  0\\
    ABPL  & 1.820e+10$\pm$9.756e+08 & 0.301$\pm$0.008& 0 & 1.588e+08$\pm$6.088e+06 & 0.652$\pm$0.013 &  0\\
    PGels & 1.881e+10$\pm$4.553e+08 & 0.306$\pm$0.004& 0 & 1.635e+08$\pm$7.625e+06 & 0.662$\pm$0.015 &  0\\
    APGL  & 1.500e+10$\pm$2.233e+08 & 0.274$\pm$0.002& 0 & 1.489e+08$\pm$4.060e+06 & 0.631$\pm$0.009 &  0\\
    IBPL  & 1.482e+10$\pm$2.294e+08 & 0.272$\pm$0.002& 0 & 1.444e+08$\pm$2.602e+06 & 0.622$\pm$0.006 & 1 \\

 IBPL$^+$ & 1.417e+10$\pm$2.924e+08 & 0.266$\pm$0.003 & 1  & 1.424e+08$\pm$2.244e+06 & 0.617$\pm$0.005 & 2 \\ 
 

 IBPL$^+$-TP &
 \textbf{1.298e+10} $\pmb{\pm}$ \textbf{1.717e+08} &
\textbf{0.255} $\pmb{\pm}$ \textbf{0.002} &
\textbf{19} &
\textbf{1.369e+08} $\pmb{\pm}$ \textbf{1.654e+06} &
\textbf{0.606} $\pmb{\pm}$ \textbf{0.004}&
\textbf{18} \\

 \hline

\end{tabular*}
\subcaption{\centering Average results by methods on BreastMNIST and PosteriorEmission datasets with $R=60$.}

\belowrulesep=0pt\aboverulesep=0pt
\centering
\begin{tabular*}{1.010\linewidth}{p{1.6cm}| c c c|c c c} 
\hline      
\diagbox[width=5.74em]{\multirow{2}{*}{Method}}{Dataset} & \multicolumn{3}{c|}{BreastMNIST}  & \multicolumn{3}{c}{PosteriorEmission}     \\ \cline{2-7}
& Obj & Rel& Ranking & Obj & Rel& Ranking  \\
\midrule   
 
    iPALM & 3.684e+10$\pm$3.170e+09 & 0.428$\pm$0.018& 0 & 1.938e+08$\pm$3.015e+06 & 0.720 $\pm$0.006 &  0\\
    BPL   & 1.822e+10$\pm$8.670e+08 & 0.302$\pm$0.007& 0 & 1.604e+08$\pm$5.153e+06 & 0.655$\pm$0.010 &  0\\
    IBPG  & 1.741e+10$\pm$1.267e+09 & 0.295$\pm$0.011& 0 & 1.515e+08$\pm$4.722e+06 & 0.637$\pm$0.010 &  0\\
    TITAN & 1.644e+10$\pm$1.195e+09 & 0.286$\pm$0.010& 0 & 1.510e+08$\pm$4.097e+06 & 0.636$\pm$0.009 & 0 \\
    ABPL  & 1.836e+10$\pm$8.292e+08 & 0.303$\pm$0.007& 0 & 1.603e+08$\pm$7.561e+06 & 0.655$\pm$0.015 &  0\\
    PGels & 1.846e+10$\pm$5.708e+08 & 0.304$\pm$0.005& 0 & 1.636e+08$\pm$6.250e+06 & 0.662$\pm$0.013 &  0\\
    APGL  & 1.452e+10$\pm$2.031e+08 & 0.269$\pm$0.002& 0 & 1.480e+08$\pm$2.779e+06 & 0.629$\pm$0.006 &  0\\
    IBPL  & 1.431e+10$\pm$2.079e+08 & 0.267$\pm$0.002& 1 & 1.449e+08$\pm$3.340e+06 & 0.623$\pm$0.007 &  0\\

    IBPL$^+$ & 1.382e+10$\pm$3.737e+08 & 0.263$\pm$0.004 & 2  & 1.423e+08$\pm$1.852e+06 & 0.617$\pm$0.004 & 1 \\ 


 IBPL$^+$-TP&
\textbf{1.270e+10} $\pmb{\pm}$ \textbf{1.928e+08} &
\textbf{0.252} $\pmb{\pm}$ \textbf{0.002} &
\textbf{17} &
\textbf{1.358e+08} $\pmb{\pm}$ \textbf{1.763e+06} &
\textbf{0.603} $\pmb{\pm}$ \textbf{0.004} &
\textbf{19}\\
 
\hline 

\end{tabular*}
\subcaption{\centering Average results by methods on BreastMNIST and PosteriorEmission datasets with $R=70$.}
\end{table*}

\begin{table*}[!t]
\belowrulesep=0pt\aboverulesep=0pt
\centering
\caption{\centering Average results by methods on microPNW dataset. The best performance is highlighted in bold.}
\label{Tabten2}
\belowrulesep=0pt\aboverulesep=0pt
\centering
\begin{tabular*}{0.889\linewidth}{p{1.6cm}| c c c|c c c} 
\hline      
\diagbox[width=5.74em]{\multirow{2}{*}{Method}}{Rank} & \multicolumn{3}{c|}{R=50}  & \multicolumn{3}{c}{R=60}     \\ \cline{2-7}
& Obj & Rel& Ranking & Obj & Rel& Ranking  \\
\midrule   
 
    iPALM & 327.703$\pm$21.194 & 0.032$\pm$0.001 &0 & 341.241$\pm$25.401 & 0.033$\pm$0.001 &0 
     \\
  
    BPL   & 181.355$\pm$13.133 & 0.024$\pm$0.001 &0 & 188.587$\pm$23.319 & 0.024$\pm$0.001 &0
     \\
    
    IBPG  & 164.771$\pm$12.327 & 0.023$\pm$0.001 &0 & 157.803$\pm$14.614 & 0.022$\pm$0.001 &0
     \\
    
    TITAN & 158.653$\pm$12.225 & 0.022$\pm$0.001 &0 & 157.464$\pm$15.139 & 0.022$\pm$0.001 &0
     \\
    
    ABPL  & 108.637$\pm$16.714 & 0.018$\pm$0.001 &0 & 93.848$\pm$10.788 & 0.017$\pm$0.001 &0
      \\
    
    PGels & 188.777$\pm$23.372 & 0.024$\pm$0.001 &0 & 199.243$\pm$23.889 & 0.025$\pm$0.001 &0
      \\
    
    APGL  & 90.267$\pm$8.961 & 0.017$\pm$0.001 &2 & 81.055$\pm$11.975 & 0.016$\pm$0.001 &1
     \\
    
    IBPL  & 84.102$\pm$6.359 & 0.016$\pm$0.001 &4 & 76.162$\pm$7.323 & 0.015$\pm$0.001 &3
      \\
    
    IBPL$^+$ & 82.568$\pm$4.699 & 0.016$\pm$0.001 &5 & 71.855$\pm$7.684 & 0.015$\pm$0.001 &5
      \\
    
    IBPL$^+$-TP & \textbf{78.305} $\pmb{\pm}$ \textbf{5.124} & \textbf{0.015} $\pmb{\pm}$ \textbf{0.001} &\textbf{9} & \textbf{66.853} $\pmb{\pm}$ \textbf{6.723} & \textbf{0.014} $\pmb{\pm}$ \textbf{0.001} &\textbf{11}  \\

\hline 
\end{tabular*}

\vspace{5pt}

\centering
\begin{tabular*}{0.5025\linewidth}{p{1.6cm}| c c c} 
\hline      
\diagbox[width=5.74em]{\multirow{2}{*}{Method}}{Rank} & \multicolumn{3}{c}{R=70}      \\ \cline{2-4}
& Obj & Rel& Ranking \\
\midrule   
 
    iPALM  & 360.060$\pm$35.019 & 0.034$\pm$0.002 &  0\\
  
    BPL   & 183.856$\pm$26.585 & 0.024$\pm$0.002 &  0\\
    
    IBPG  & 161.444$\pm$16.288 & 0.023$\pm$0.001 & 0 \\
    
    TITAN & 155.195$\pm$14.071 & 0.022$\pm$0.001 &  0\\
    
    ABPL   & 89.397$\pm$13.593 & 0.017$\pm$0.001 &  0\\
    
    PGels  & 200.730$\pm$25.827 & 0.025$\pm$0.002 & 0 \\
    
    APGL   & 67.675$\pm$6.814 & 0.015$\pm$0.001 &  2\\
    
    IBPL  & 66.607$\pm$6.623 & 0.014$\pm$0.001 &  3\\
    
    IBPL$^+$  & 64.320$\pm$7.179 & 0.014$\pm$0.001 &  6\\
    
    IBPL$^+$-TP  & \textbf{58.633} $\pmb{\pm}$ \textbf{6.101} & \textbf{0.014} $\pmb{\pm}$  \textbf{0.001} & \textbf{9} \\
 
\hline 
\end{tabular*}

\end{table*}

\vspace{5pt}

\begin{figure*}[!ht]
    \centering 
    \includegraphics[width=0.33\linewidth]{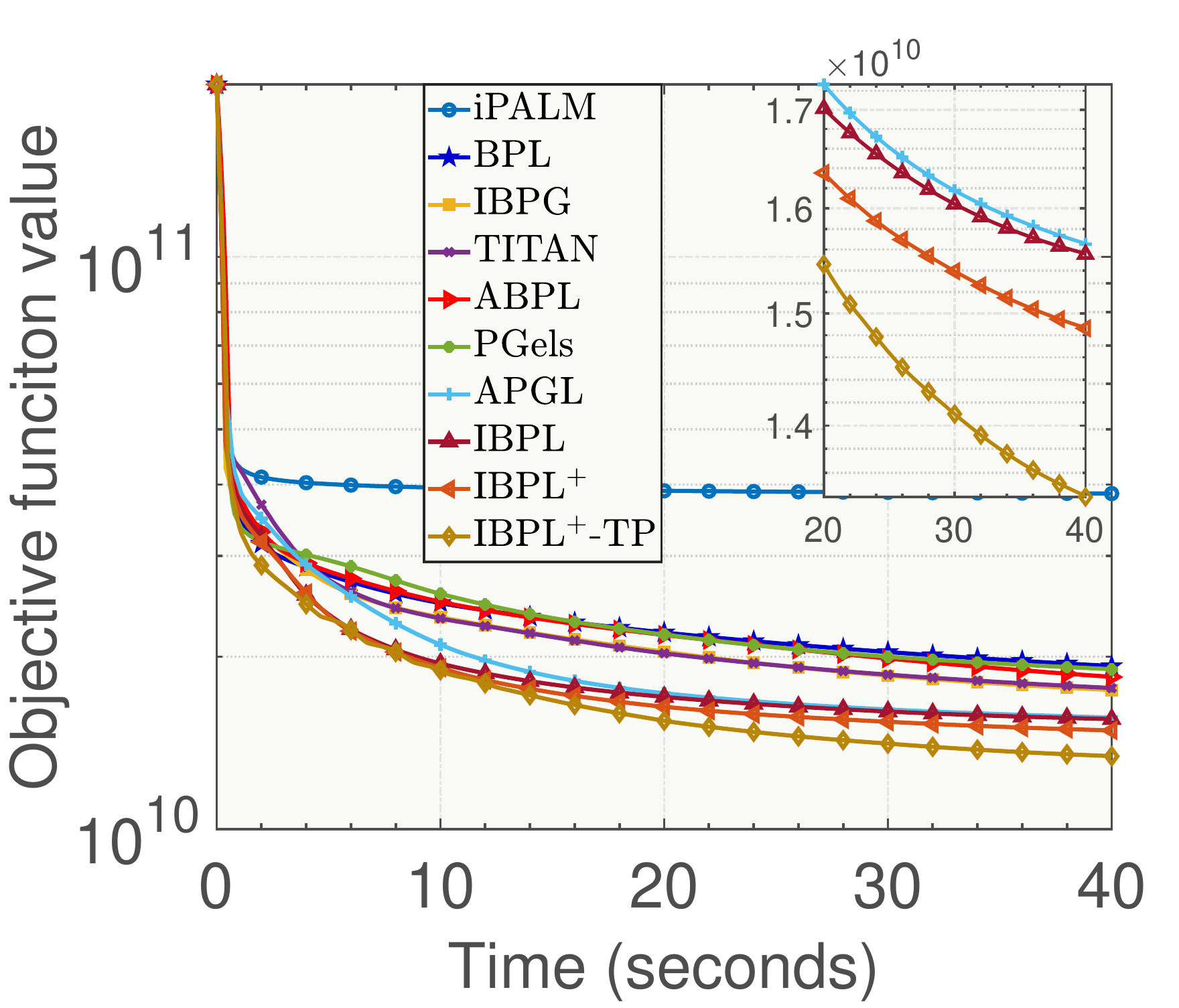}
    \includegraphics[width=0.33\linewidth]{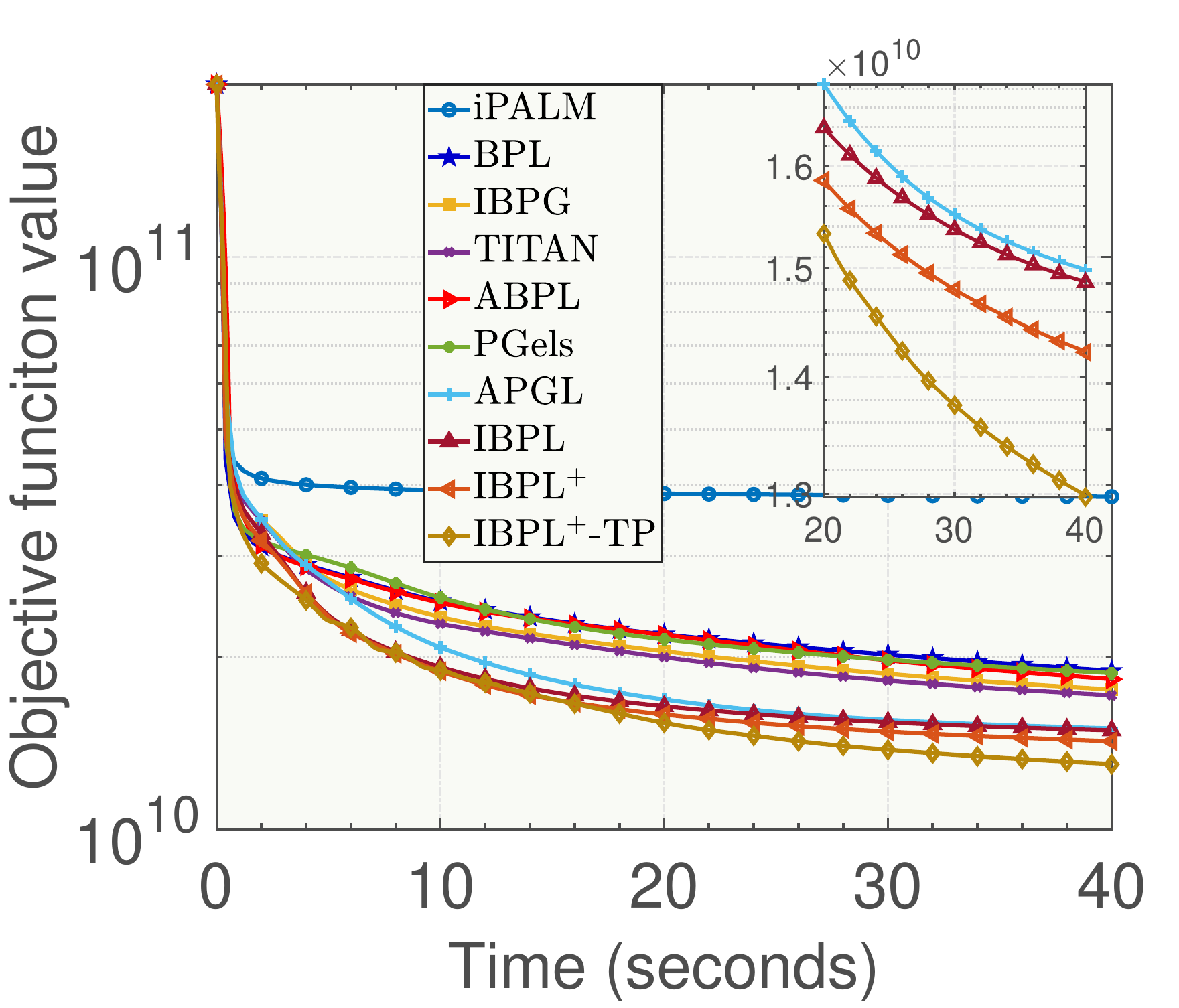}
    \includegraphics[width=0.33\linewidth]{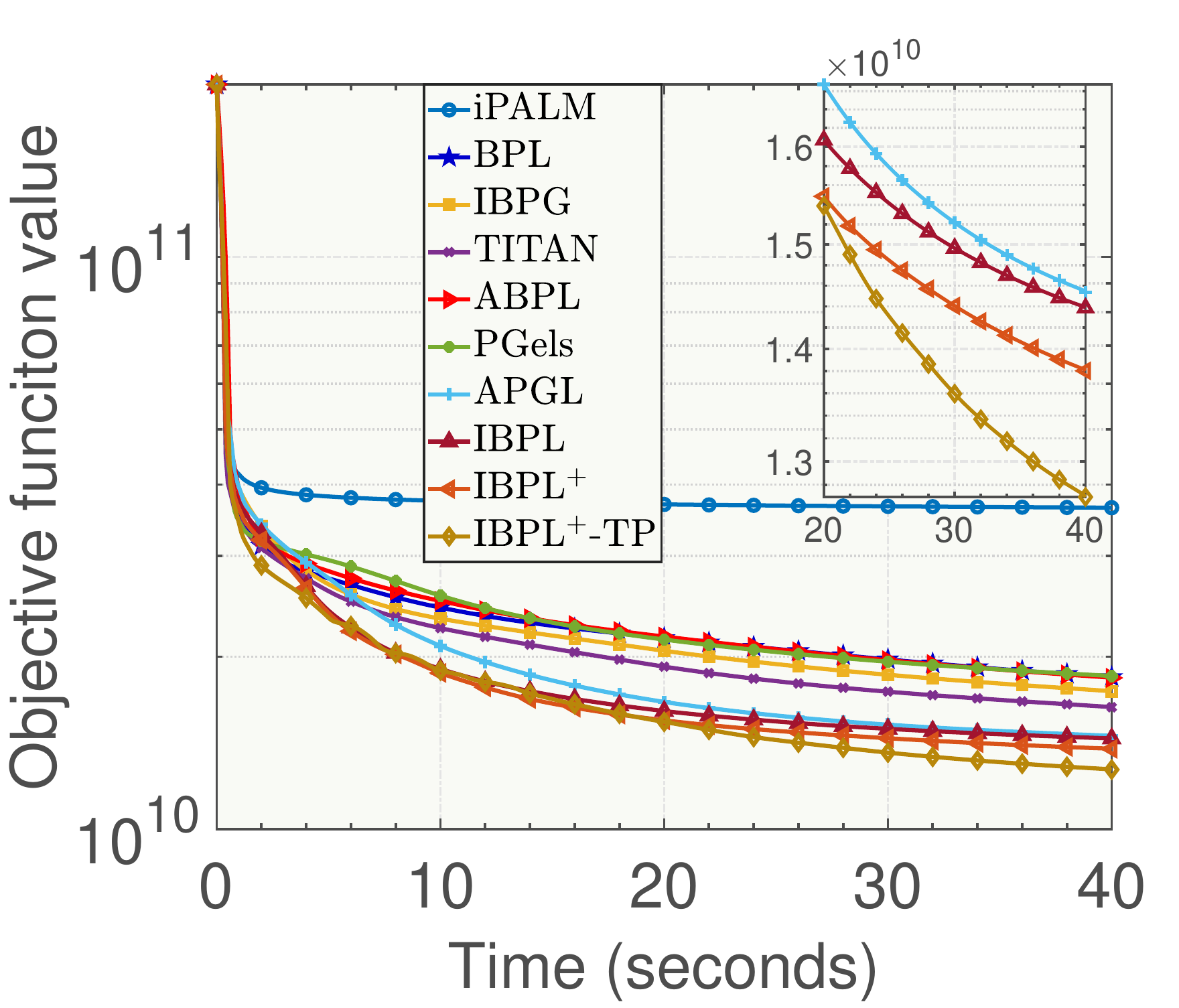}
    \caption{
    Average convergence behavior of all methods on BreastMNIST dataset with $R=50~$(left), $R=60~$(middle) and $R=70~$(right). A partially enlarged view of several curves with the fastest descending speed and the best effect is also provided. 
    }\label{figten1}
\end{figure*}

\begin{figure*}[!ht]
    \centering 
    \includegraphics[width=0.33\linewidth]{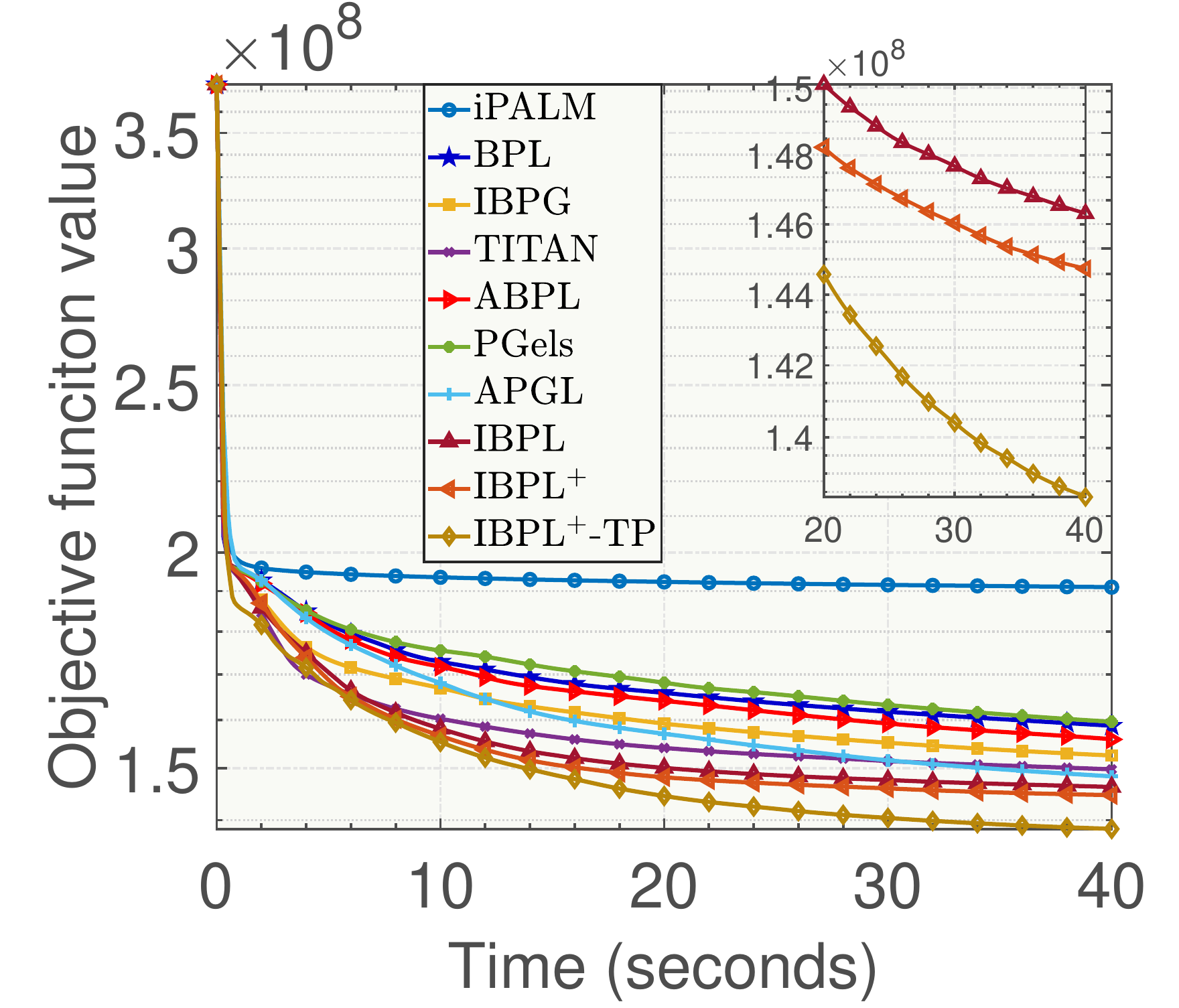}
    \includegraphics[width=0.33\linewidth]{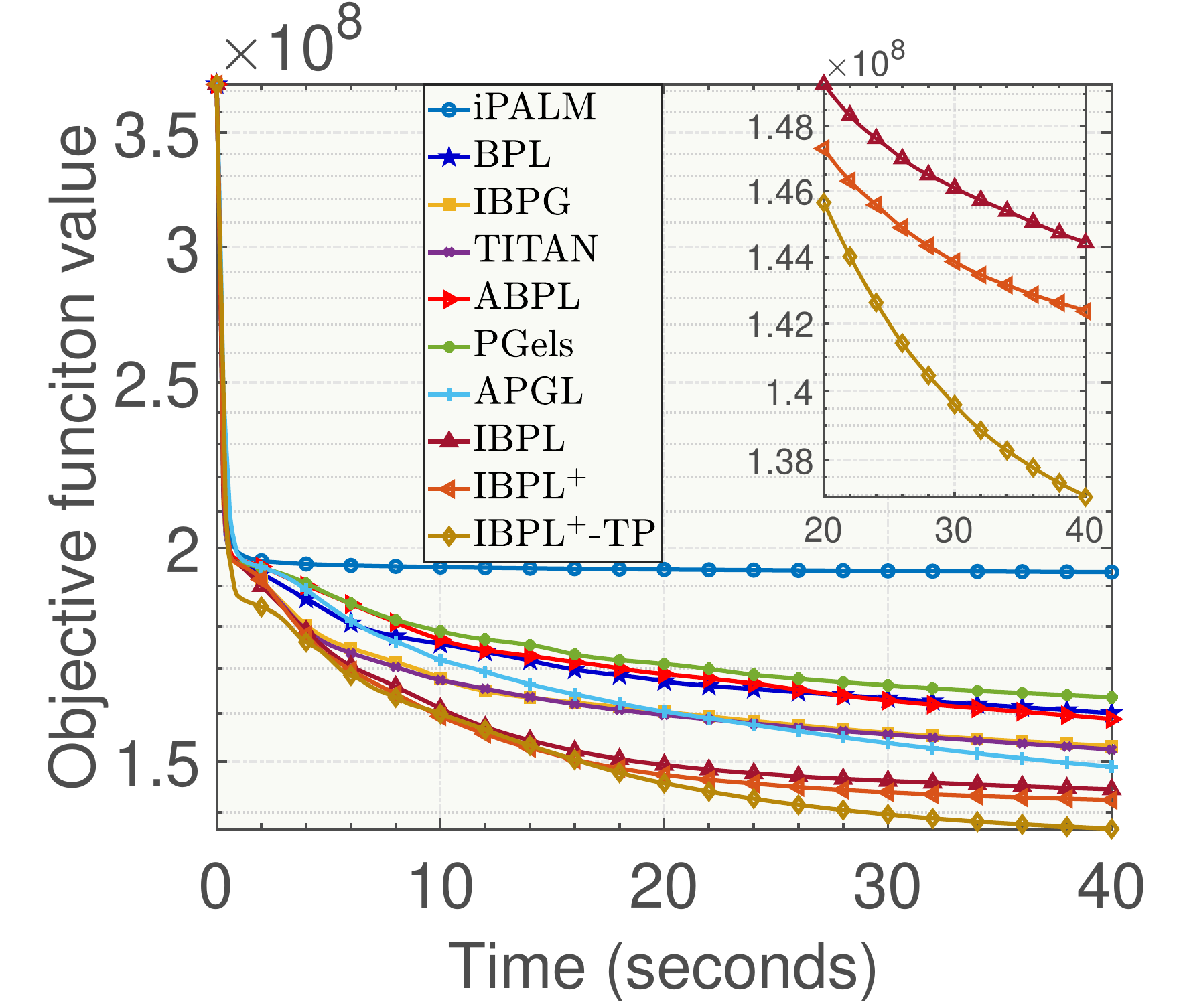}
    \includegraphics[width=0.33\linewidth]{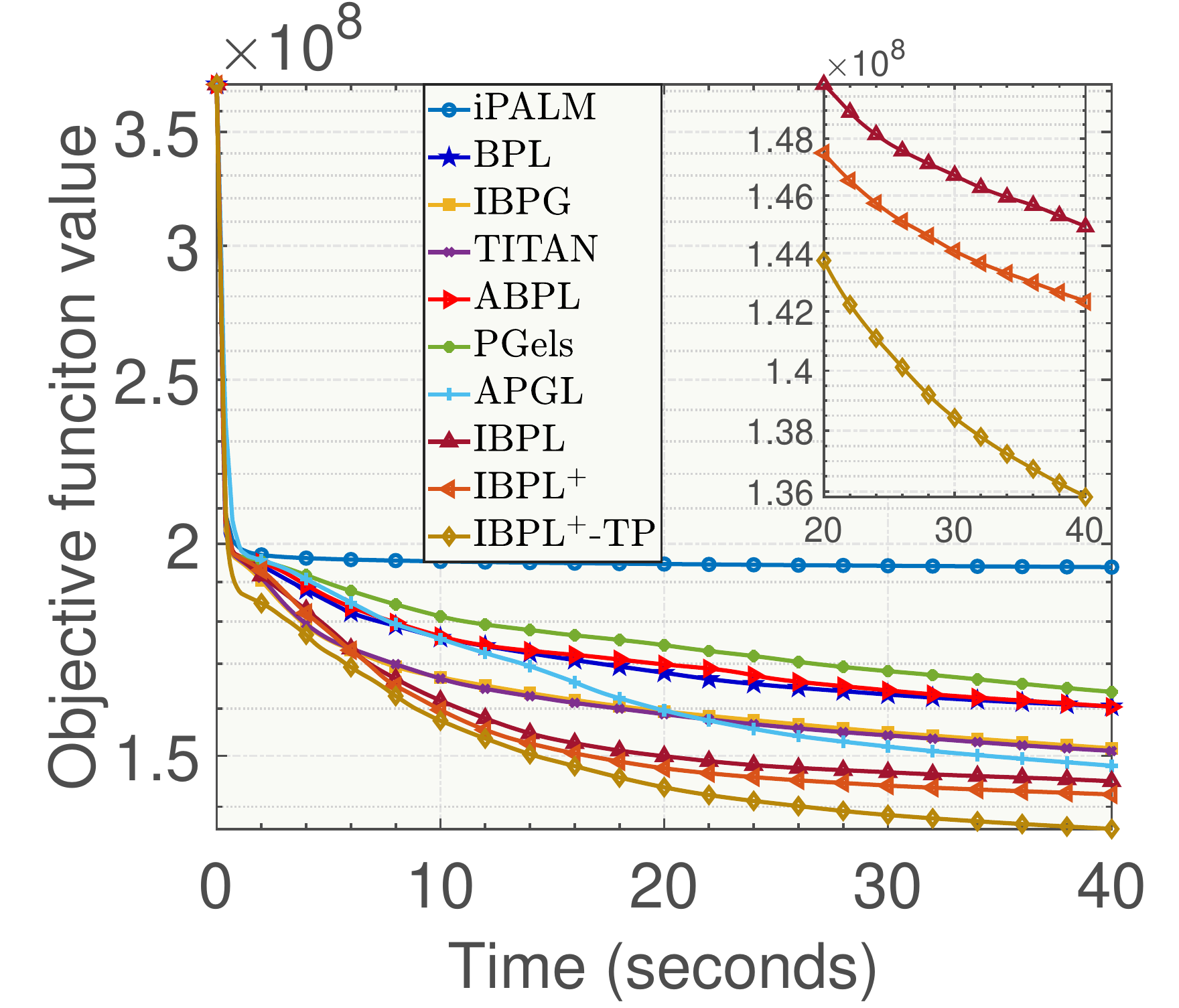}
    \caption{
     Average convergence behavior of all methods on PosteriorEmssion dataset with  $R=50~$(left), $R=60~$(middle) and $R=70~$(right). A partially enlarged view of several curves with the fastest descending speed and the best effect is also provided. 
    }\label{figten2}
\end{figure*}

\begin{figure*}[!ht]
    \centering 
    \includegraphics[width=0.33\linewidth]{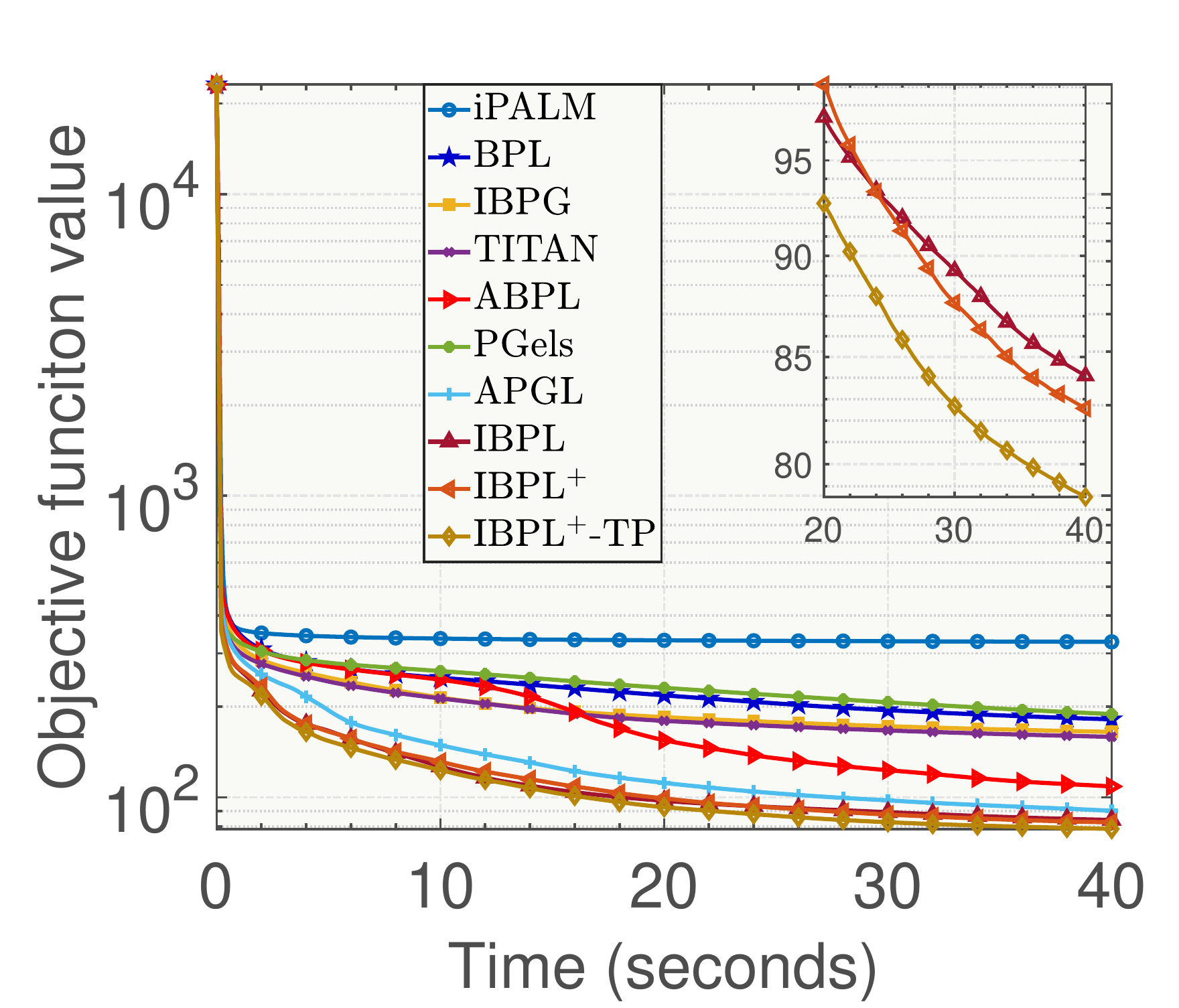}
    \includegraphics[width=0.33\linewidth]{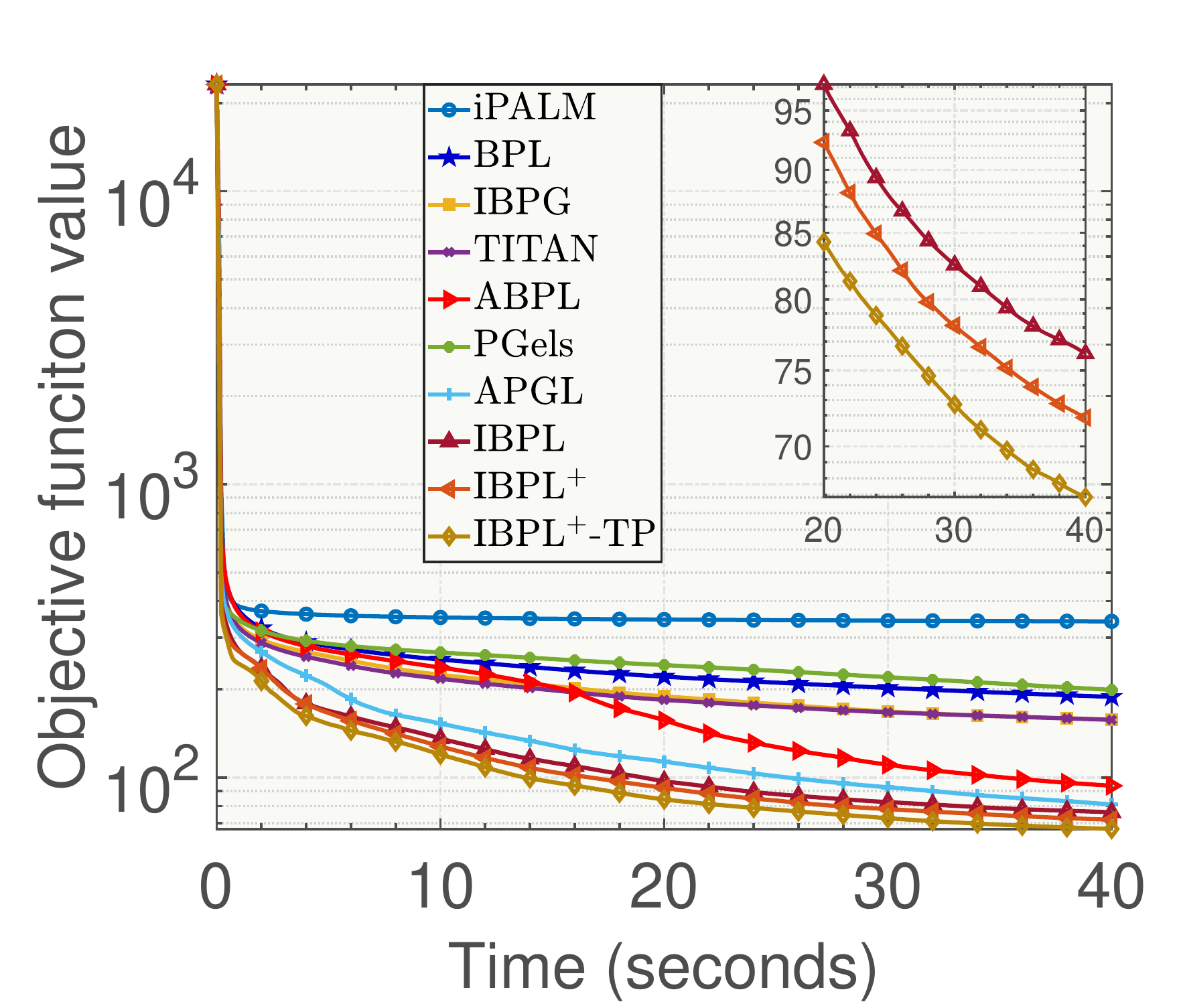}
    \includegraphics[width=0.33\linewidth]{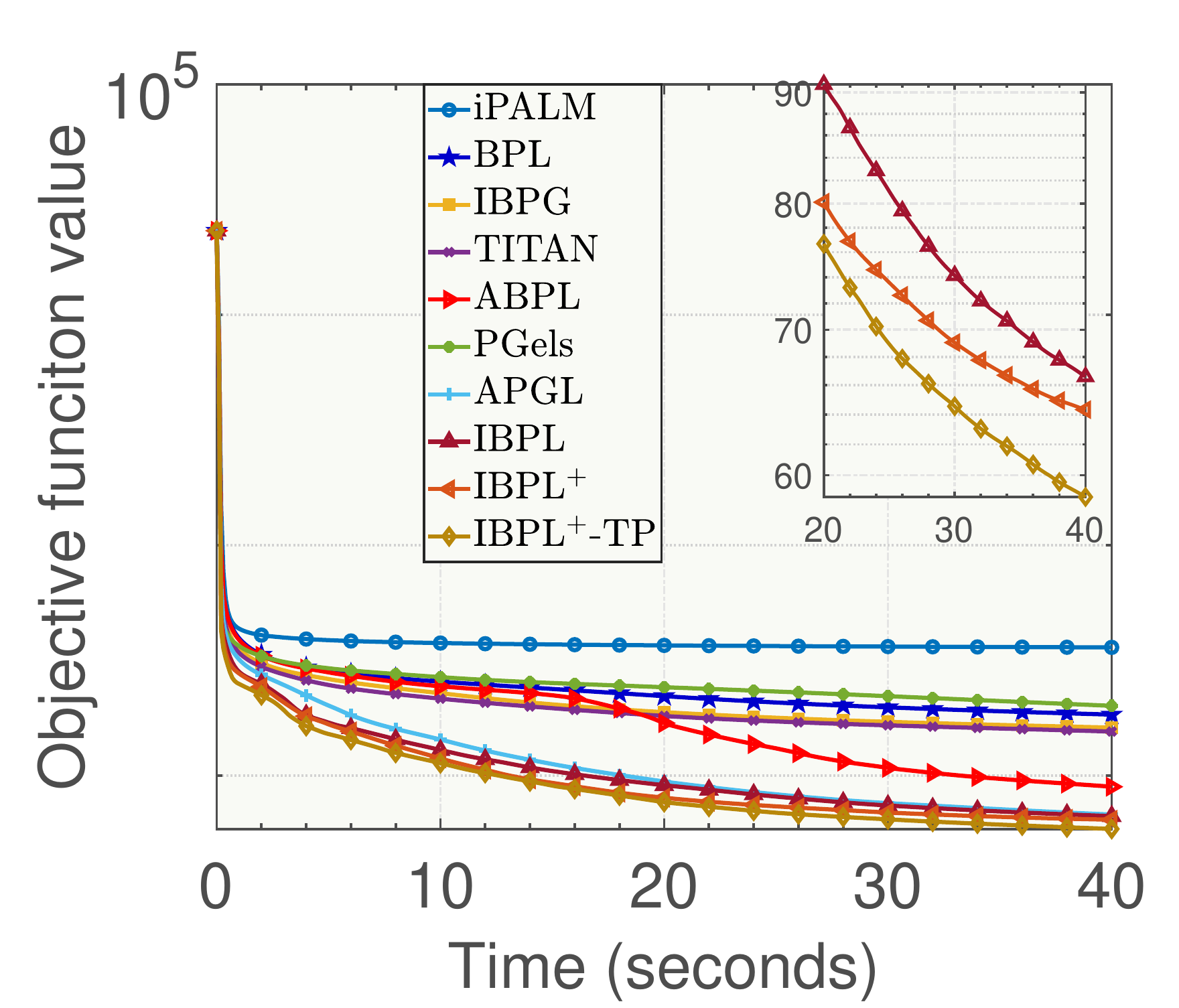}
    \caption{ 
     Average convergence behavior of all methods on microPNW dataset with  $R=50~$(left), $R=60~$(middle) and $R=70~$(right). A partially enlarged view of several curves with the fastest descending speed and the best effect is also provided. 
    }\label{figten3}
\end{figure*}

\subsection{Sparse nonnegative CP decomposition with $\ell_0$-constraints ($\ell_0$-SNCP)}
\subsubsection{$\ell_0$-SNCP model}
The NCP (nonnegative CP decomposition) is an important tool for analyzing nonnegative multi-way data \cite{liu2025inertial,chen2022unsupervised,mohaoui2024cp}. 
Similarly, since the $\ell_{0}$ norm is the most intuitive representation of sparsity, the $\ell_0$ norm is better suited for feature selection in NCP. 
However, optimizing the Sparse NCP with $\ell_0$ norm constraints ($\ell_0$-SNCP) is known to be NP-hard, nonsmooth and nonconvex.  
To address this challenge, we will also design a practical convergent scheme for solving $\ell_0$-SNCP based on our method and apply this scheme to solve the problem.

Mathematically, the $\ell_0$-SNCP model can be presented as follows. 
\begin{align}
&& &\min\limits_{\left\{A_{i}\right\}_{i=1}^{N}}\frac{1}{2}\|\mathcal{X}-{\llbracket {\left\{A_{i}\right\}_{i=1}^{N}}  
 \rrbracket }\|_{F}^{2},  \notag \\
&& &s. t.~ A_{i} \geq 0, ~ \|A_{i}\|_{0} \leq s_{i}, 
\label{e43}
\end{align}
where $\mathcal{X} \in \prod_{i=1}^{N} \mathbb{R}^{d_{i}}$ and $A_{i}\in\mathbb{R}^{d_{i}\times R}$, $ \llbracket \ \rrbracket $ represents Kruskal operator, $\odot$ represents the Khatri-Rao product.

\subsubsection{Solving $\ell_0$-SNCP using IBPL$^+$-TP}
Similarly, if we write Eq. (\ref{e43}) in the form of Eq. (\ref{e11}), then we have 
\begin{align}
    && H(\left\{{A_{i}} \right\}^{N}_{i=1})&=\frac{1}{2}\|\mathcal{X}-{\llbracket {\left\{A_{i}\right\}_{i=1}^{N}} \rrbracket }\|_{F}^{2}, ~F_{i}(A_{i})&=\delta(A_{i}). 
    \label{SNCPEq}
\end{align}
The definition of $\delta(A_{i})$ is the same as Eq. (\ref{delta}). 

Let $X^{(n)} \in  \mathbb{R}^{d_{n} \times \prod_{i=1, i \neq n}^{N} d_{i}}$ represent the mode-n unfolding of the original tensor $\mathcal{X}$, the mode-n unfolding of the   $ \llbracket {\left\{A_{i}\right\}_{i=1}^{N}} \rrbracket $ can be written as $A_{n}B_{n}^{T}$, where  $B_{n}= \prod_{j=1,j\neq n}^{N}\odot A_{j}$. Therefore, in the BCD frame, Eq. (\ref{e43}) can be decomposed into the sub-problems as follows. 
\begin{center}
     $\min\limits_{A_{i}}J_{i}(A_{i})=\frac{1}{2}\|X^{(i)}-A_{i}(B_{i})^{T}\|_{F}^{2}$, 
     \\
$ s. t.~ A_{i} \geq 0, ~ \|A_{i}\|_{0} \leq s_{i}$, 
\end{center}
where  $B_{i}= \prod_{j=1,~j\neq i}^{N}\odot A_{j}$, so the gradient of $H(\left\{{A_{i}} \right\}^{N}_{i=1})$  is 
\begin{align}
&& \nabla_{A_{i}} H(\left\{{A_{i}} \right\}^{N}_{i=1})&=\frac{\partial}{\partial A_{i}}J_{i}(A_{i}) \notag\\
&& &=A_{i}(B_{i})^{T}(B_{i})-X^{(i)}B_{i}. \label{sncpgrad}
 \end{align}
By  Eq. (\ref{sncpgrad}), it is easy to see that the Lipschitz constant of $\nabla_{A_{i}} H(\left\{{A_{i}} \right\}^{N}_{i=1})$ is  
\begin{center}
$L(A_{i})=\|(B_{i})^{T}B_{i}\|_{F}$,      
\end{center}
where  $B_{i}= \prod_{j=1,~j\neq i}^{N}\odot A_{j}$. 
As a result, Eq. (\ref{SNCPEq}) also satisfies Assumption \ref{assump1}, and it is easy to verify that the proximal operator (Eq. (\ref{iprox})) for solving the $\ell_0$-SNCP problem can be expressed as follows. 
\begin{align}
\notag
\end{align}
\begin{align}
 && A_{i}^{k+1} &\in prox_{\sigma_{i}^{k} F_{i}}
 (U^{k}_{i}) \notag \\
 && &=\operatorname*{\arg\min} \limits_{A}\left\{\frac{1}{2\sigma_{i}^{k}}\|A-U^{k}_{i}\|_{F}^{2}+F_{i}(A) \right\}  \notag \\ 
 && &= \operatorname*{\arg\min} \limits_{A}\left\{\|A-U^{k}_{i}\|_{F}^{2}: A\geq 0,~\|A\|_{0}\le s_{i}\right\},  \notag
\end{align}
where 
\begin{center}
$U^{k}_{i}=z^{k}_{i}-\sigma_{i}^{k} \nabla_{A_{i}} H(\{x^{k+1}_{j}\}_{j=1}^{i-1},y^k_{i},\{x^{k}_{j}\}_{j=i+1}^{N})$, 
$\frac{1}{\sigma_{i}^{k}}=\gamma^{k}_{i}L(A^{k}_{i}), ~ \gamma^{k}_{i}>1$. 
\end{center}

\subsubsection{Numerical results}
We test the methods on the BreastMNIST\footnote{\url{https://github.com/MedMNIST/MedMNIST}}  \cite{yang2021medmnist}, PosteriorEmission\footnote{\url{https://zenodo.org/record/7646462}} \cite{tichy_ondrej_2023_7646462}, and microPNW\footnote{\url{https://github.com/niyiyu/PNW-ML}} \cite{ni2023pnw} datasets. 
The BreastMNIST dataset is a medical dataset in the form of the three-dimensional tensor  ($\mathbb{R}^{780\times224\times224}$), the PosteriorEmission dataset is a remote sensing dataset in the form of the four-dimensional tensor ($\mathbb{R}^{240\times40\times12\times8}$).  
The microPNW dataset is a micro version of the Pacific Northwest Curated Seismic dataset, each sample in the microPNW dataset is transformed into the form of channels $\times$ time frames $\times$ frequency bins, thus the microPNW dataset is a four-dimensional tensor ($\mathbb{R}^{100\times150\times3\times50}$), and we also normalize the value of each feature in the microPNW dataset to the range of 0 to 1.

For parameter settings, we set the initial parameters as 
$t_{2}=1.3, ~ \beta^{1}_{i}=0.2,~ \alpha^{k}_{i}=\min(1.03\beta^{k}_{i},\alpha_{max}), ~ \alpha_{max}=\beta_{max}=0.9999, ~ \gamma^{k}_{i}=1.01,~\rho_{1}=10^{-5}$, $T=10^{-3}$, and the initial matrix is also generated by the uniform distribution. Similarly, the number of non-zero elements in each matrix cannot exceed $30\%$ of the total number of elements. We also define Obj as the objective function value, relative error (Rel) as Rel$=\frac{\|\mathcal{X}-{\llbracket  \{ A_{i}\}_{i=1}^N  \rrbracket }\|_{F}}{\|\mathcal{X}\|_{F}}$. 
$Ranking$ is also defined as the number of times that the corresponding method obtains the lowest relative error among all methods across all independent runs.  We also adopt these metrics to quantitatively describe the numerical performance of all methods in this experiment.

We set $t_{max}=40$ in this experiment. 
Table \ref{Tabten} and Table \ref{Tabten2} report the average results over 20 independent runs with $R$ varying among $\{50,60,70\}$ on these datasets. Figure \ref{figten1}, Figure \ref{figten2} and Figure \ref{figten3} also record the evolution of average objective function value over 20 independent runs with respect to time on these datasets.  
From Table \ref{Tabten}, Table \ref{Tabten2}, Figure \ref{figten1}, Figure \ref{figten2} and Figure \ref{figten3}, it is easy to get the same conclusion that     
\begin{itemize}

    \item [1)] \textit{IBPL outperforms all other methods except IBPL$^+$ and IBPL$^+$-TP in all cases}: 
     This shows that using two different extrapolation points and the independence of extrapolation parameters of these extrapolation points can effectively enhance the numerical performance.   

    \item [2)] \textit{IBPL$^+$ outperforms all other methods except IBPL$^+$-TP in all cases}: 
    This once again demonstrates that when using two different extrapolation points with independent extrapolation parameters, incorporating an adaptive momentum update strategy for the extrapolation parameters into the proposed method can indeed further accelerate convergence and improve numerical performance. 

   \item [3)] \textit{IBPL$^+$-TP still remarkably outperforms all other methods in all cases}: 
    This not only reaffirms the benefits of using two different extrapolation points with independent extrapolation parameters and the adaptive momentum update strategy for the extrapolation parameters, but also shows that introducing a two-phase adaptive momentum update strategy, which explicitly separates the rapid descent phase from the steady refinement phase to update the extrapolation parameters effectively, provides an additional boost in convergence speed and numerical robustness, making IBPL$^+$-TP the most effective method overall.

\end{itemize}

\begin{figure*}[!ht]
    \centering 
    \subfloat[ \centering $\cos \theta^k_{\min}$ by the methods on lp\_ship12l(left) and BreastMNIST(right) datasets. ] 
    {
    \includegraphics[width=0.45\linewidth]{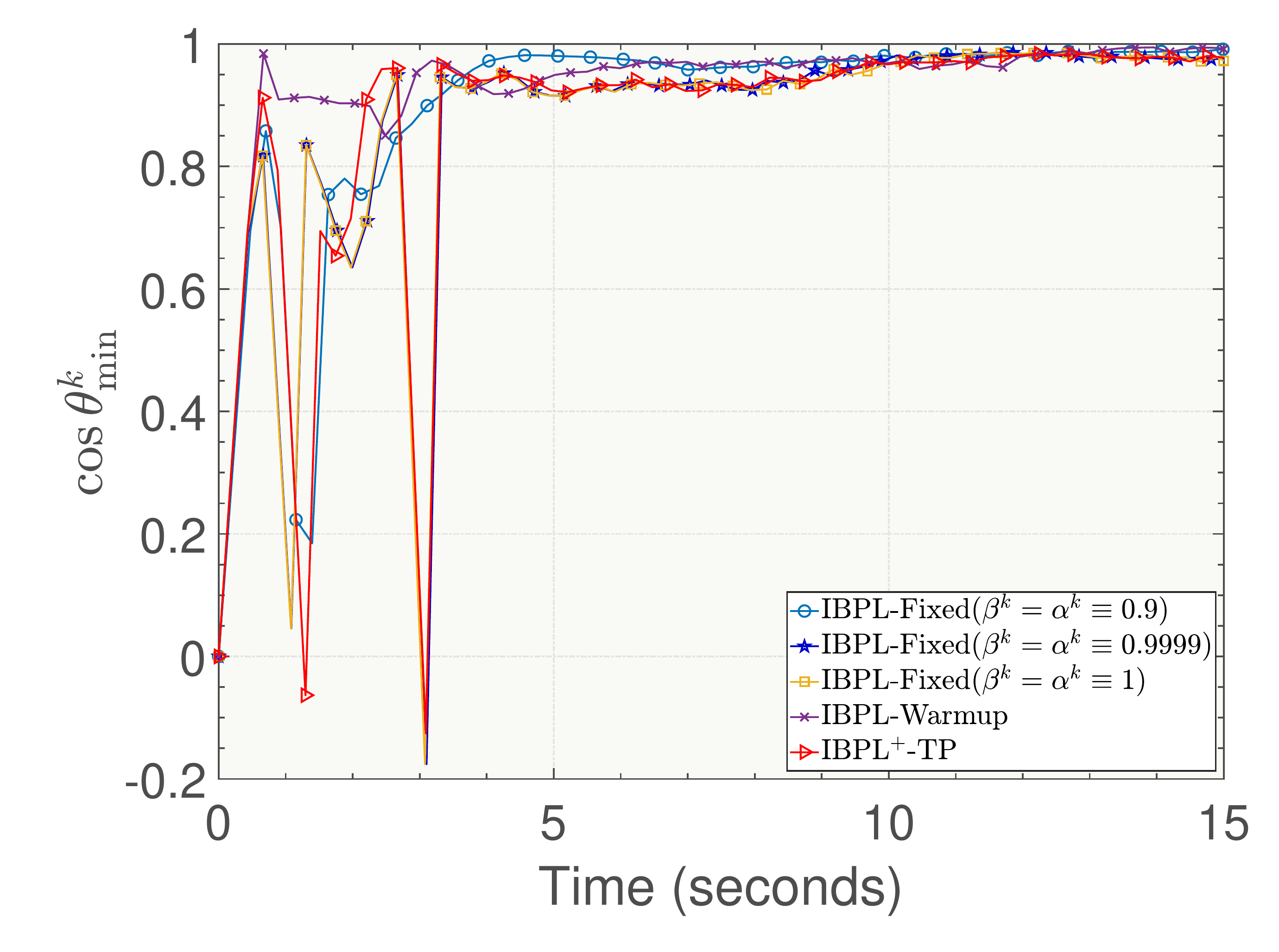}
    \includegraphics[width=0.45\linewidth]{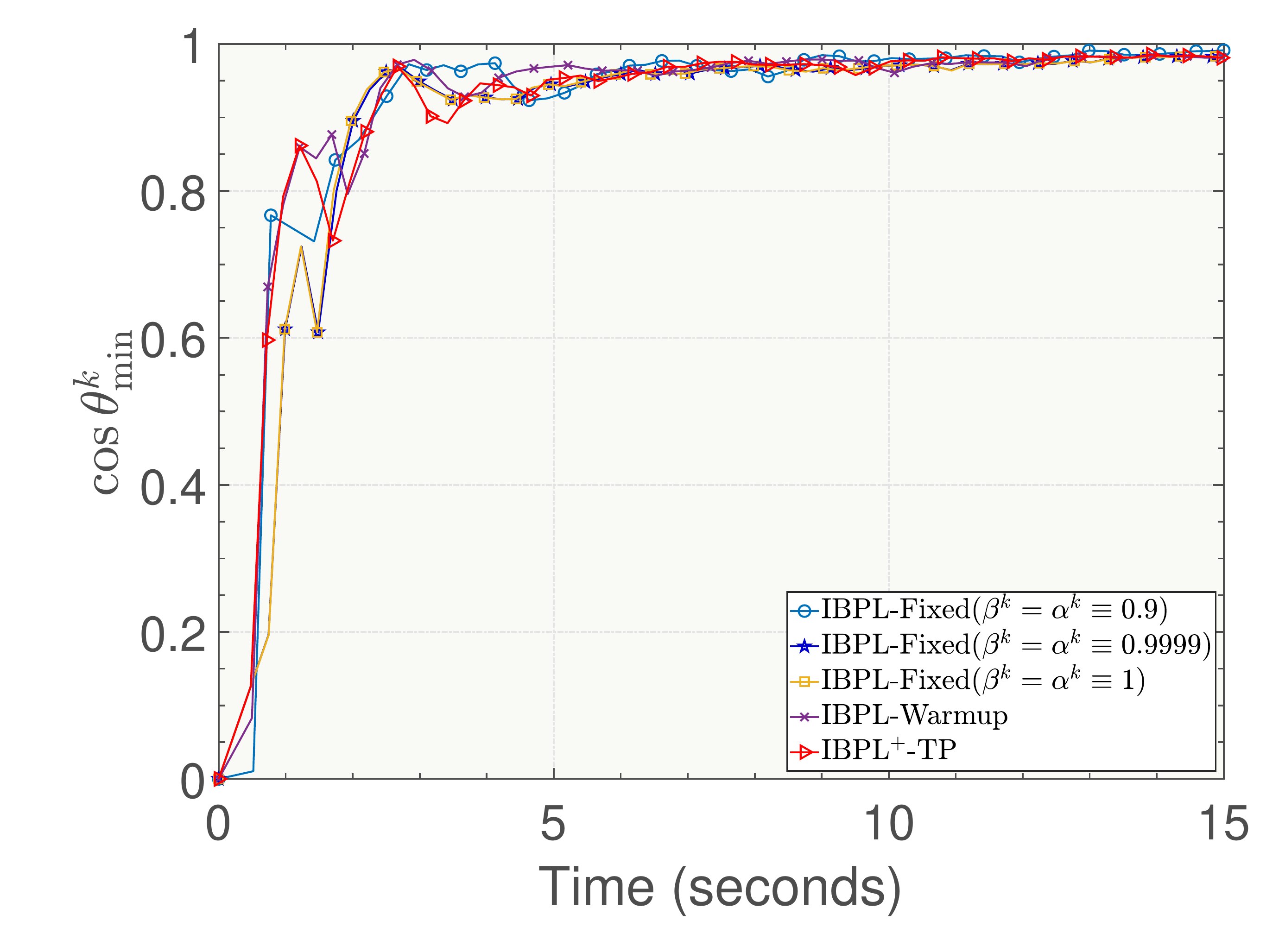}
    }

    \subfloat[ \centering  Objective function value by the methods on lp\_ship12l(left) and BreastMNIST(right) datasets.  A partially enlarged view of several curves with the fastest descending speed and the best effect is also provided.]
    {
    \includegraphics[width=0.45\linewidth]{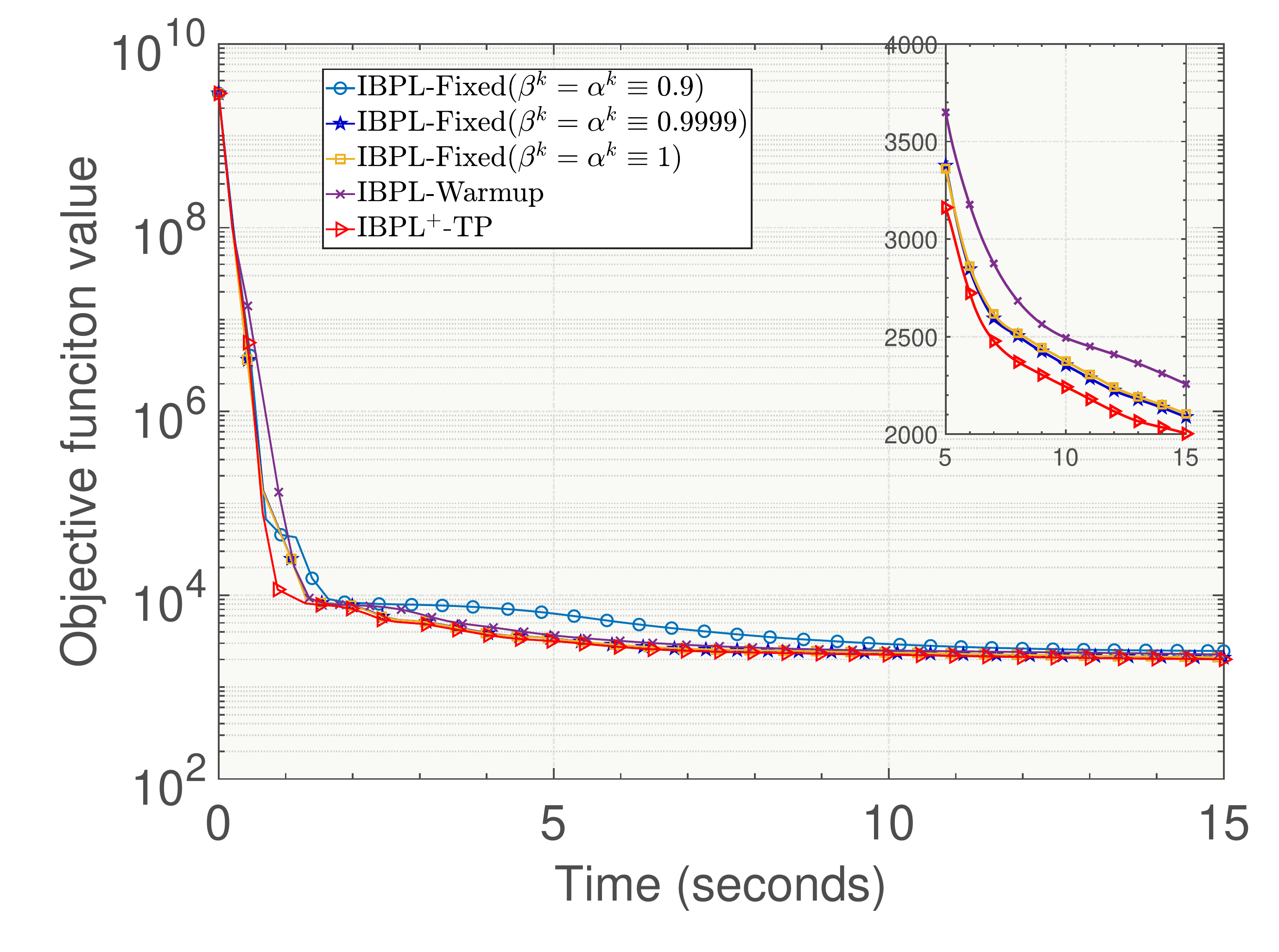}
    \includegraphics[width=0.45\linewidth]{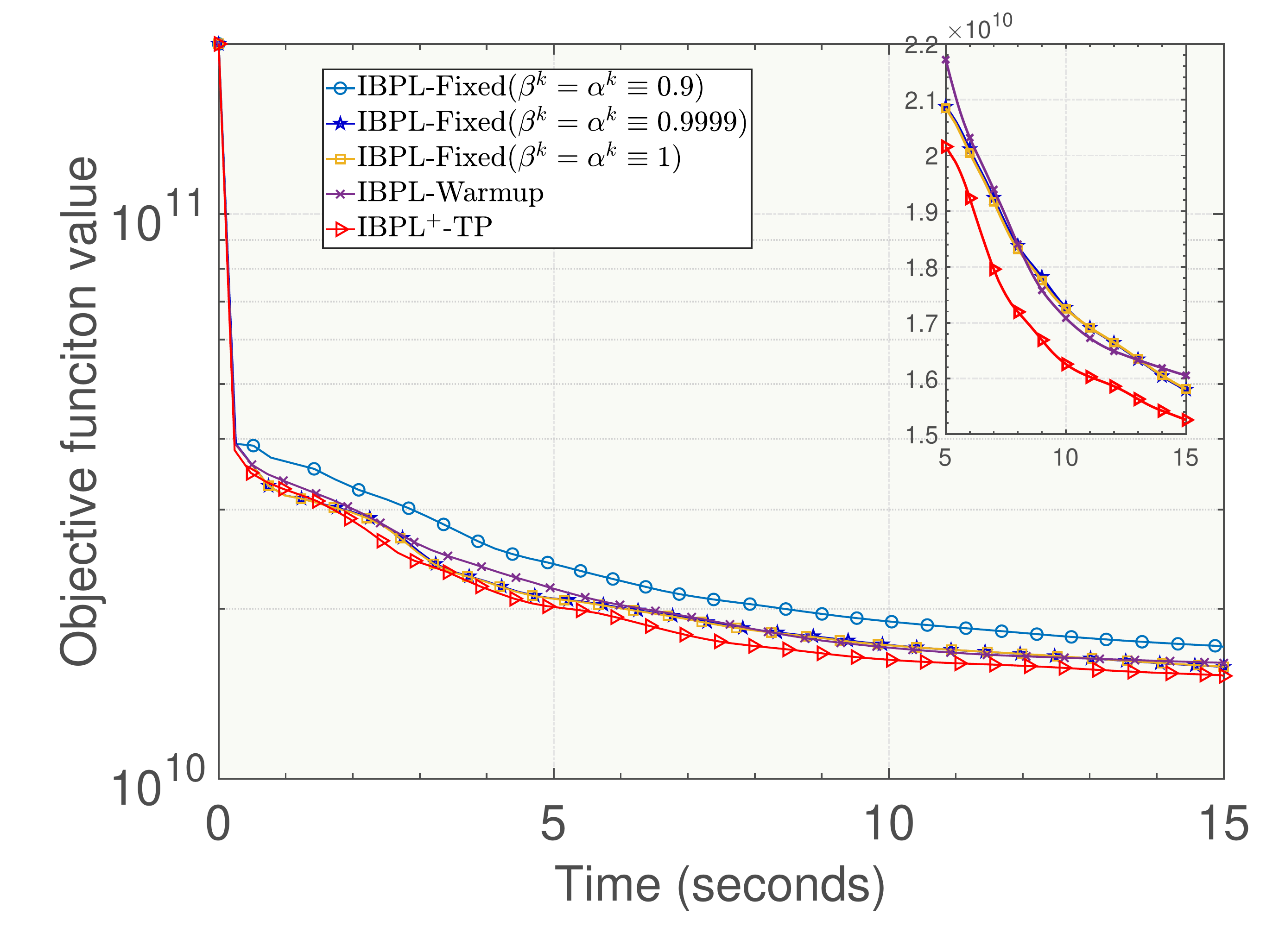}
    }

    \subfloat[\centering  $\|x_{(N)}^{k+1}-x_{(N)}^k\|_F$ by the methods on lp\_ship12l(left) and BreastMNIST(right) datasets.   ]
    {
    \includegraphics[width=0.45\linewidth]{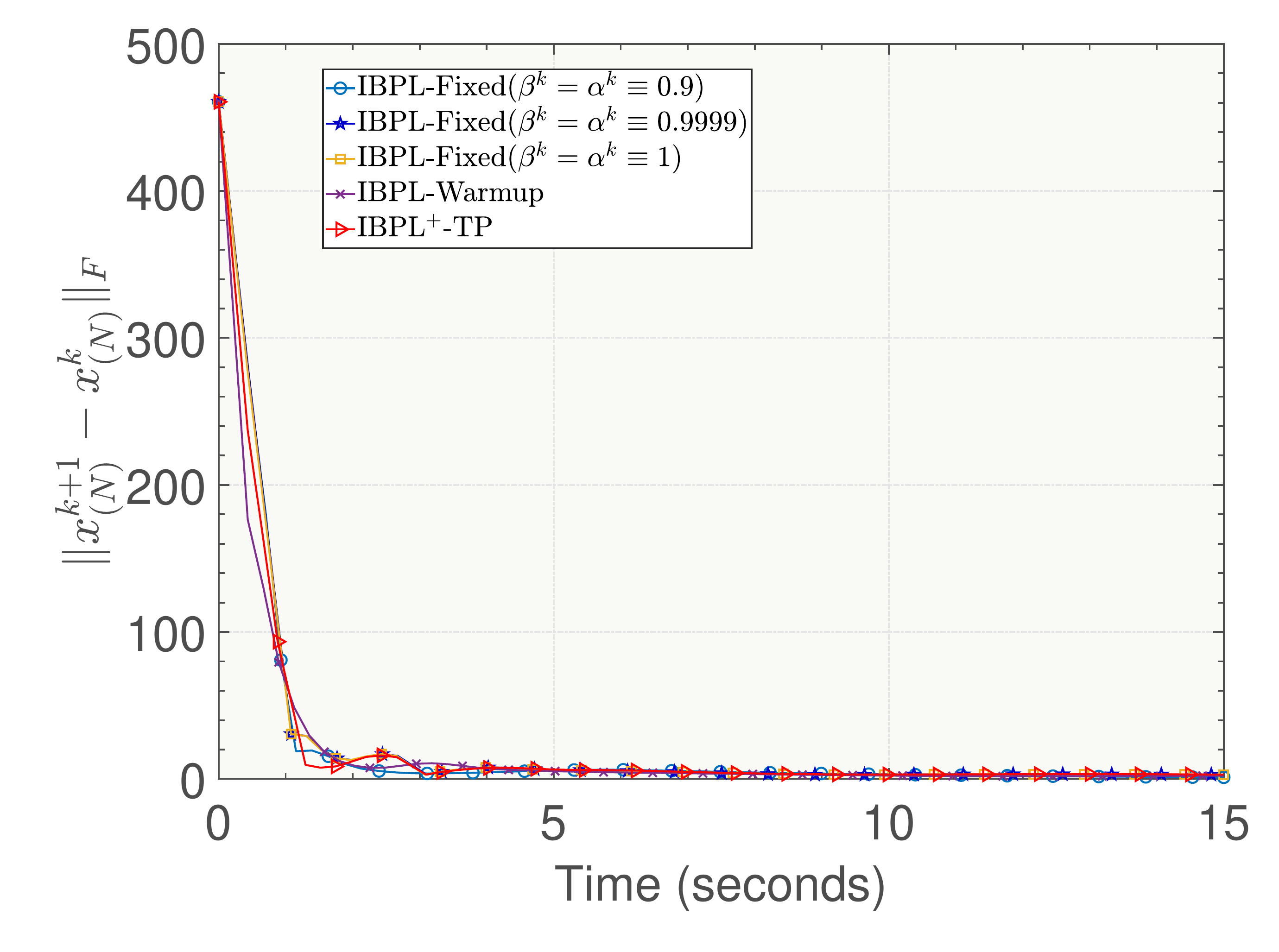}
    \includegraphics[width=0.45\linewidth]{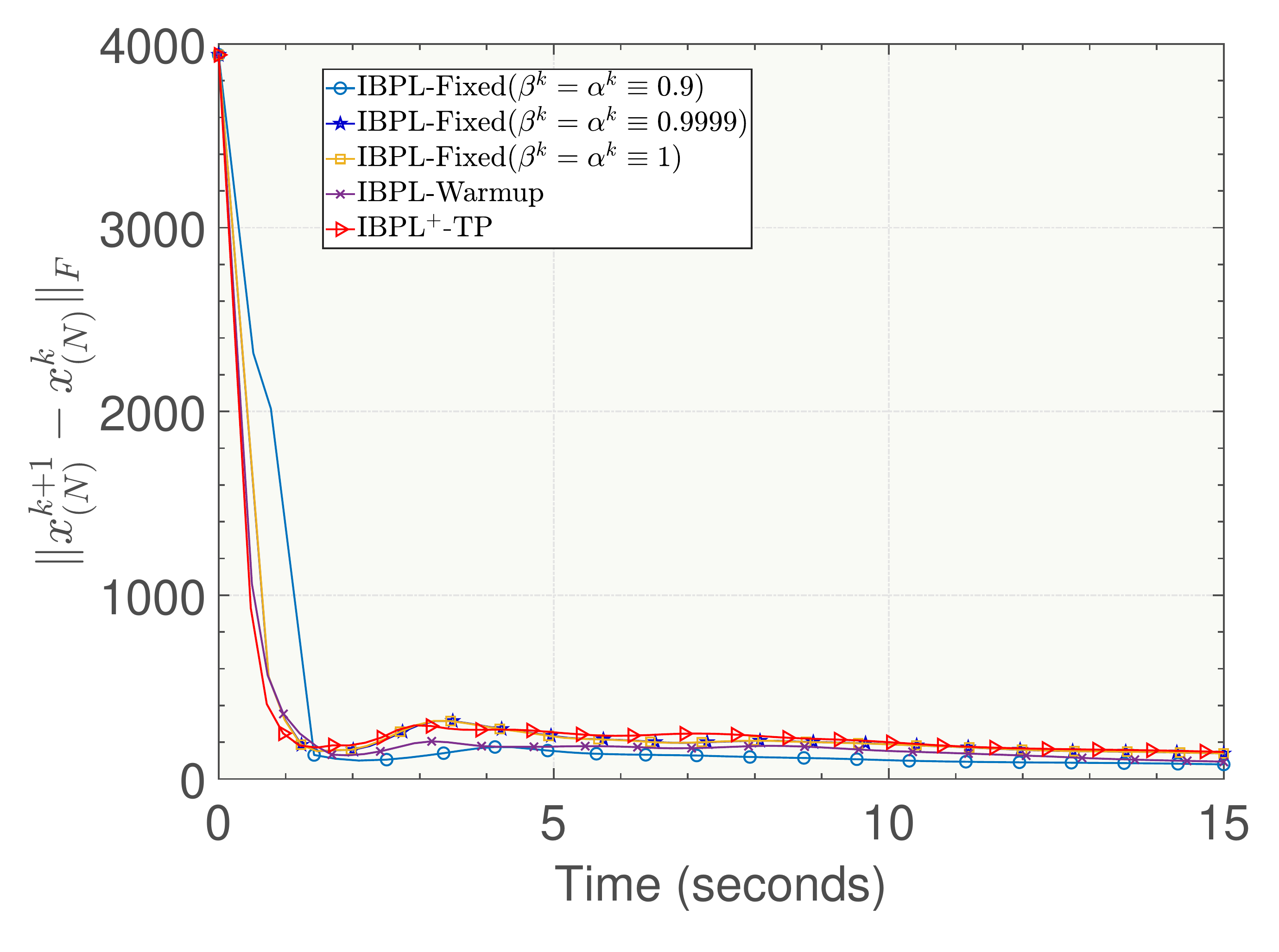}
    }  
    
    \caption{\centering Numerical stability analysis of the proposed method on the lp\_ship12l and BreastMNIST datasets. }
    
    \label{fignum}
\end{figure*}

\subsection{Numerical stability and warm-up analysis}
Next, we investigate whether a warm-up phase is needed to maintain numerical stability of the proposed method and analyze the impact of the extrapolation parameters on numerical stability, especially when the extrapolation parameters take the extreme values that are close to or even exceed the theoretical bound required for global convergence. 

Specifically, first, to quantitatively characterize the iterative behavior of the $\{x_{(N)}^k\}_{k\in \mathbb{N}}$, we define cosine similarity as $\cos \theta_i^k=\frac{\langle x_{i}^{k+1}-x_{i}^{k},x_{i}^{k}-x_{i}^{k-1}\rangle}{\|x_{i}^{k+1}-x_{i}^{k}\|_F\|x_{i}^{k}-x_{i}^{k-1}\|_F}$, which can directly measure the directional consistency of the iterative trajectory of the $i$-th variable block. The values of $\cos \theta_i^k$ close to $-1$ indicate that the iterative trajectory of the $i$-th variable block exhibits a sharp directional change (zig-zagging), and the values of $\cos \theta_i^k$ near $0$ indicate orthogonal steps, corresponding to frequent direction changes. Conversely, the values of $\cos \theta_i^k$ close to $1$ indicate that the iterative trajectory of the $i$-th variable block is moving in a consistent direction (smooth progress). Building on this, to intuitively assess the overall stability of the iterative trajectory, we introduce the worst-case cosine similarity metric $\cos \theta^k_{\min} = \min (\{ \cos \theta_i^{k}\}_{i=1}^N)$, which captures the worst-case (i.e., least consistent) update direction among the variables. 


Second, we propose two transformation forms of the IBPL$^+$-TP to investigate whether a warm-up phase is needed and analyze the impact of the extreme extrapolation parameter values on the numerical stability, as follows. 

\begin{itemize} 
    \item [1)]IBPL-Fixed: it is a version of IBPL$^+$-TP with fixed extrapolation parameter values and $T<0$, which means that the extrapolation parameters $\alpha^{k}_{i}$ and $\beta^{k}_{i}$ are set to constant values, and these constant values are even allowed to exceed the theoretical bound required for global convergence.   Specifically, for IBPL-Fixed in this experiment, we vary the extrapolation parameter values among $\{0.9, 0.9999,1\}$ to systematically analyze their impact on numerical stability.  
 
    \item [2)]IBPL-Warmup: it is a version of IBPL$^+$-TP with a warm-up phase for the extrapolation parameters, which means that the extrapolation parameters $\alpha^{k}_{i}$ and $\beta^{k}_{i}$ are set as $t_{1}=1,~t_{k+1}=\frac{1+\sqrt{1+4t_{k}^{2}}}{2}, ~ \alpha^{k}_{i}=\beta^{k}_{i}=\frac{t_{k}-1}{t_{k+1}}$.  

\end{itemize}

Then, we evaluate the numerical performance of these two transformation forms on both the lp\_ship12l dataset for the $\ell_0$-SNMF problem and the BreastMNIST dataset for the $\ell_0$-SNCP problem. We set $r=300$ for $\ell_0$-SNMF and $R=50$ for $\ell_0$-SNCP as well as $t_{max}=15$ in this experiment, all other parameters remain unchanged. 
Figure \ref{fignum} illustrates the $\cos \theta^k_{\min}$, objective function value and $\|x_{(N)}^{k+1}-x_{(N)}^k\|_F$ with respect to time on the lp\_ship12l and the BreastMNIST datasets. From Figure \ref{fignum}, we have the following observations.

\begin{itemize}
    \item [1)] For the IBPL-Fixed with the large extrapolation parameter values (i.e., $0.9999, 1$) and IBPL$^+$-TP, $\cos \theta^k_{\min}$ exhibits one or two obvious sharp oscillations on the lp\_ship12l dataset, while the $\cos \theta^k_{\min}$ of IBPL-Warmup remains stable without significant oscillations on the lp\_ship12l dataset. However, on the lp\_ship12l dataset, the convergence speed of both IBPL-Fixed with the large extrapolation parameter values and IBPL$^+$-TP is faster than that of the IBPL-Warmup. Besides, on the BreastMNIST dataset, $\cos \theta^k_{\min}$ of all methods does not oscillate, and the convergence speed of the IBPL$^+$-TP is still faster than that of the IBPL-Warmup, but the convergence speed of the IBPL-Fixed with the large extrapolation parameters is comparable to that of IBPL-Warmup.

    \item [2)] Furthermore, after approximately 5 seconds, the $\cos \theta^k_{\min}$  of all methods stabilizes and converges to 1 on both datasets, and $\|x_{(N)}^{k+1}-x_{(N)}^k\|_F$ of all methods also stabilizes and converges to 0 on both datasets. 

    \item [3)] Throughout the optimization, the objective function value decreases monotonically for all methods on both datasets, irrespective of the extrapolation parameter values (even when the extrapolation parameter values exceed the theoretical bound required for global convergence). This empirical finding aligns with Theorem \ref{t1}, that is, the monotonic convergence of Algorithms  \ref{IBPL$^+$} and \ref{IBPLtp} holds independently of the extrapolation parameters. 
\end{itemize} 

From the above observations,  we conclude that taking larger or extreme extrapolation parameter values, even those values exceeding theoretical limits, may risk transient instability of the early iterative trajectory but does not impair the convergence speed, while warm-up ensures stability of the iterative trajectory at the potential cost of convergence speed. Furthermore, although IBPL$^+$-TP occasionally exhibits oscillations in the early iterative trajectory, the convergence speed of IBPL$^+$-TP is consistently faster than that of IBPL-Warmup, which further confirms the effectiveness of our proposed method. 
Therefore, warm-up is not strictly necessary for our proposed method, and the proposed method is self‑stabilizing.  
Moreover, these observations experimentally validate the correctness of Theorem \ref{t1}, i.e., the monotonic convergence of Algorithm \ref{IBPL$^+$} and Algorithm \ref{IBPLtp} indeed holds independently of the extrapolation parameters.

\begin{figure*}[!ht]
\centering
    \includegraphics[width=0.47\linewidth]{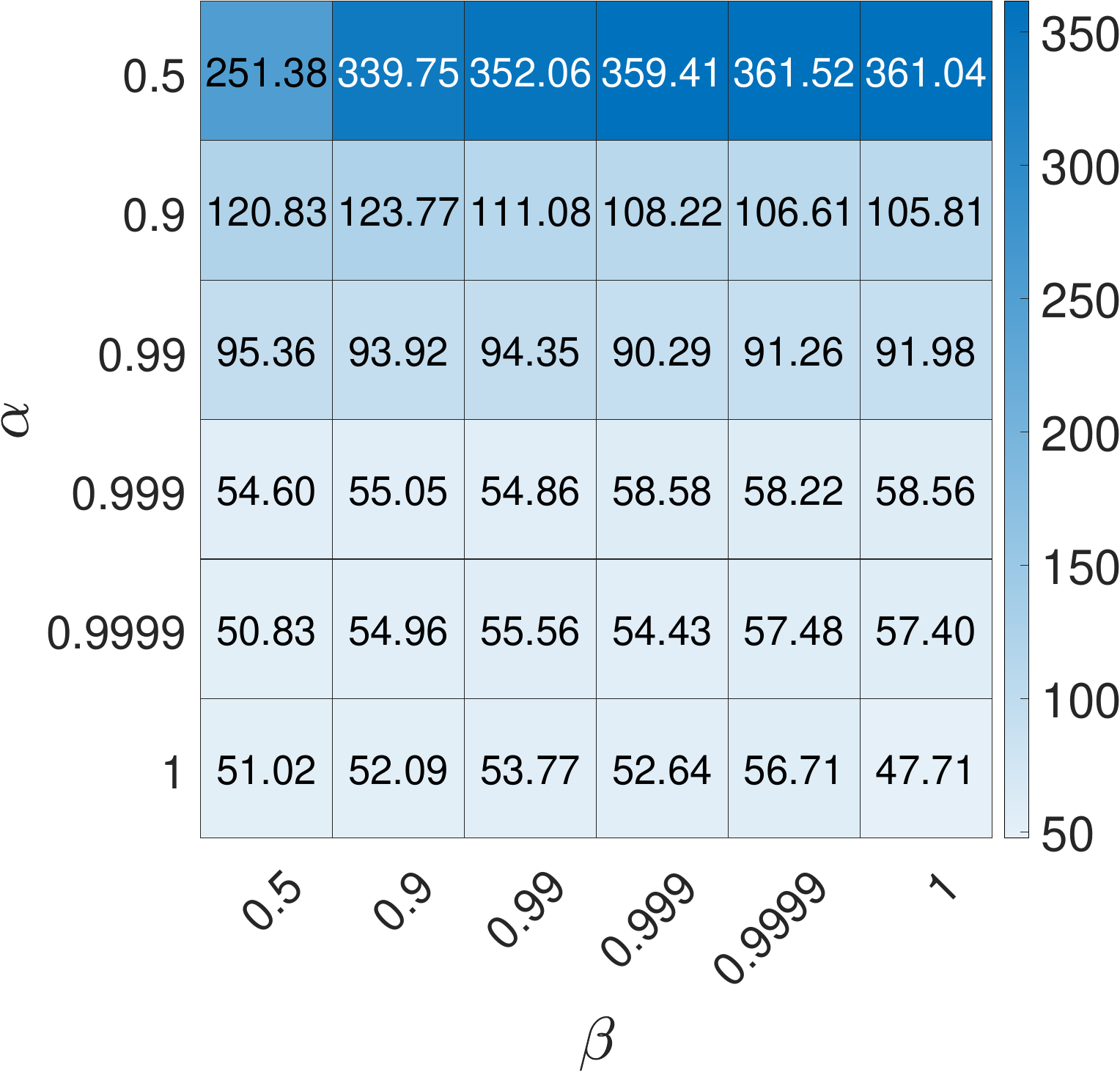} \hspace{4mm}
    \includegraphics[width=0.47\linewidth]{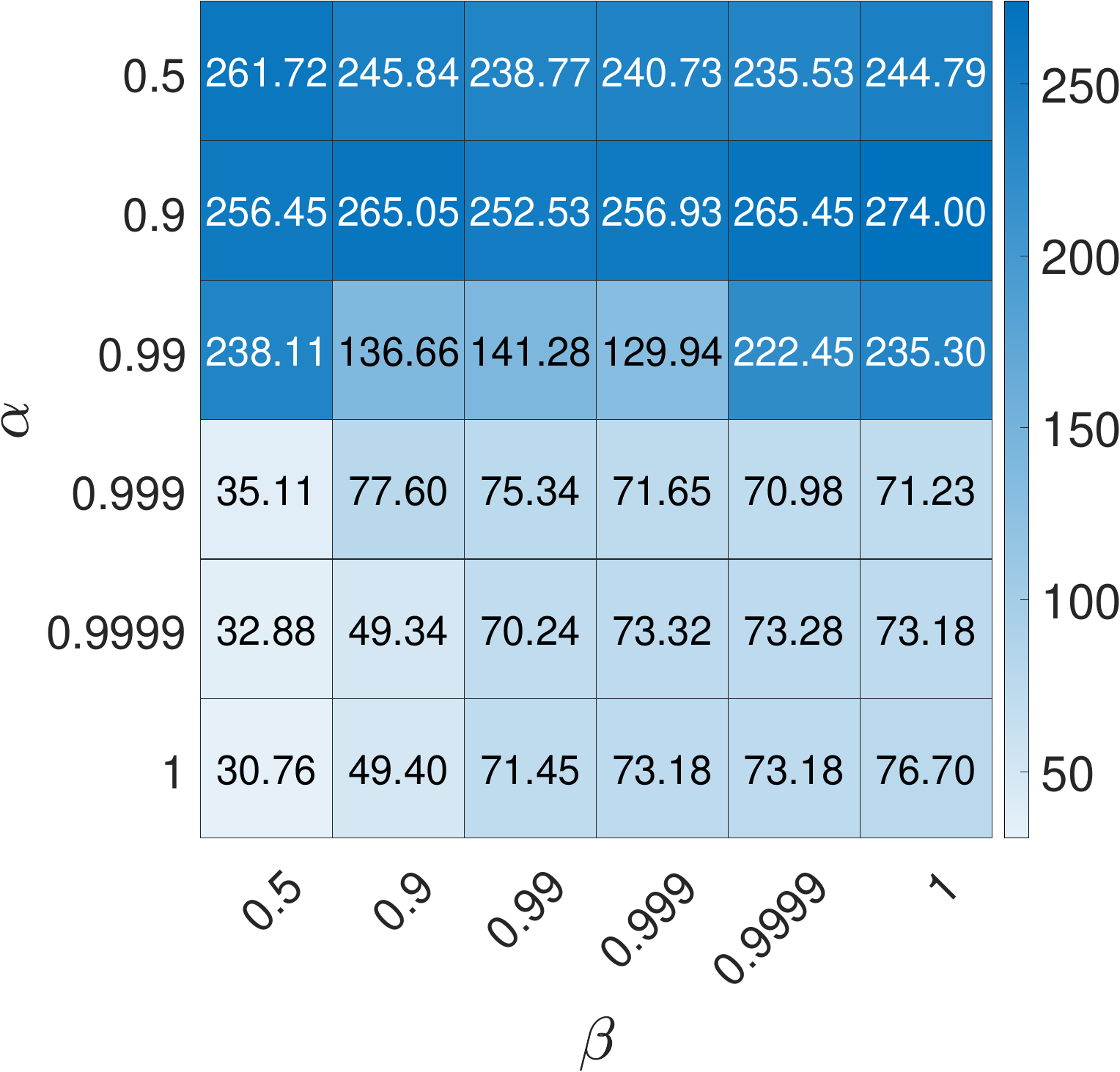}
    \caption{{Average convergence time with respect to the extrapolation parameters $\alpha$ and $\beta$ on the lp\_ship12l (left) and BreastMNIST (right) datasets, respectively. }  
    } \label{figsensitivity}
\end{figure*}

\subsection{Parameter sensitivity analysis}
Next, we evaluate the convergence performance of the proposed method under various extrapolation parameter settings. Specifically, we consider a discrete set of values for the extrapolation parameters $\alpha$ and $\beta$, ranging from conservative to extreme: 
$\{0.5, 0.9, 0.99, 0.999, 0.9999, 1.0\}$. 
For each combination $(\alpha, \beta)$ in this $6\times 6$ grid, we run the IBPL-Fixed algorithm (defined in the previous section) on the lp\_ship12l dataset for $\ell_0$-SNMF problem and on the BreastMNIST dataset for $\ell_0$-SNCP problem, and record the time required to reach the stopping criterion $\frac{|J(x^{k+1})-J(x^k)|}{|J(x^0)|} < 10^{-6}$. 
We set $r=300$ for $\ell_0$-SNMF and $R=50$ for $\ell_0$-SNCP in this experiment, and all other parameters remain unchanged. The average convergence time results over 10 independent runs are summarized in the heatmaps of Figure \ref{figsensitivity}, where the color hue of the corresponding entry indicates the average convergence time when the stopping criterion is met.

From Figure \ref{figsensitivity}, we have the following observations. 

\begin{itemize}
    \item [1)] For the lp\_ship12l dataset, the convergence performance of the proposed method generally shows a positive correlation with $\alpha$. 
    However, the convergence performance is relatively insensitive to  $\beta$ when $\alpha\geq 0.9$.  
    Notably, the convergence performance achieves the best results when $\alpha = \beta = 1$.

    \item [2)] For the BreastMNIST dataset, the convergence performance of the proposed method also generally shows a positive correlation with $\alpha$. 
    However, the convergence performance is relatively sensitive to $\beta$ when $\alpha\geq 0.99$ and $\beta$ is small.    
    Notably, the convergence performance achieves the best results when $\alpha =1$ and $\beta = 0.5$. 
\end{itemize}

\begin{figure*}[!ht]
\centering
    \includegraphics[width=0.47\linewidth]{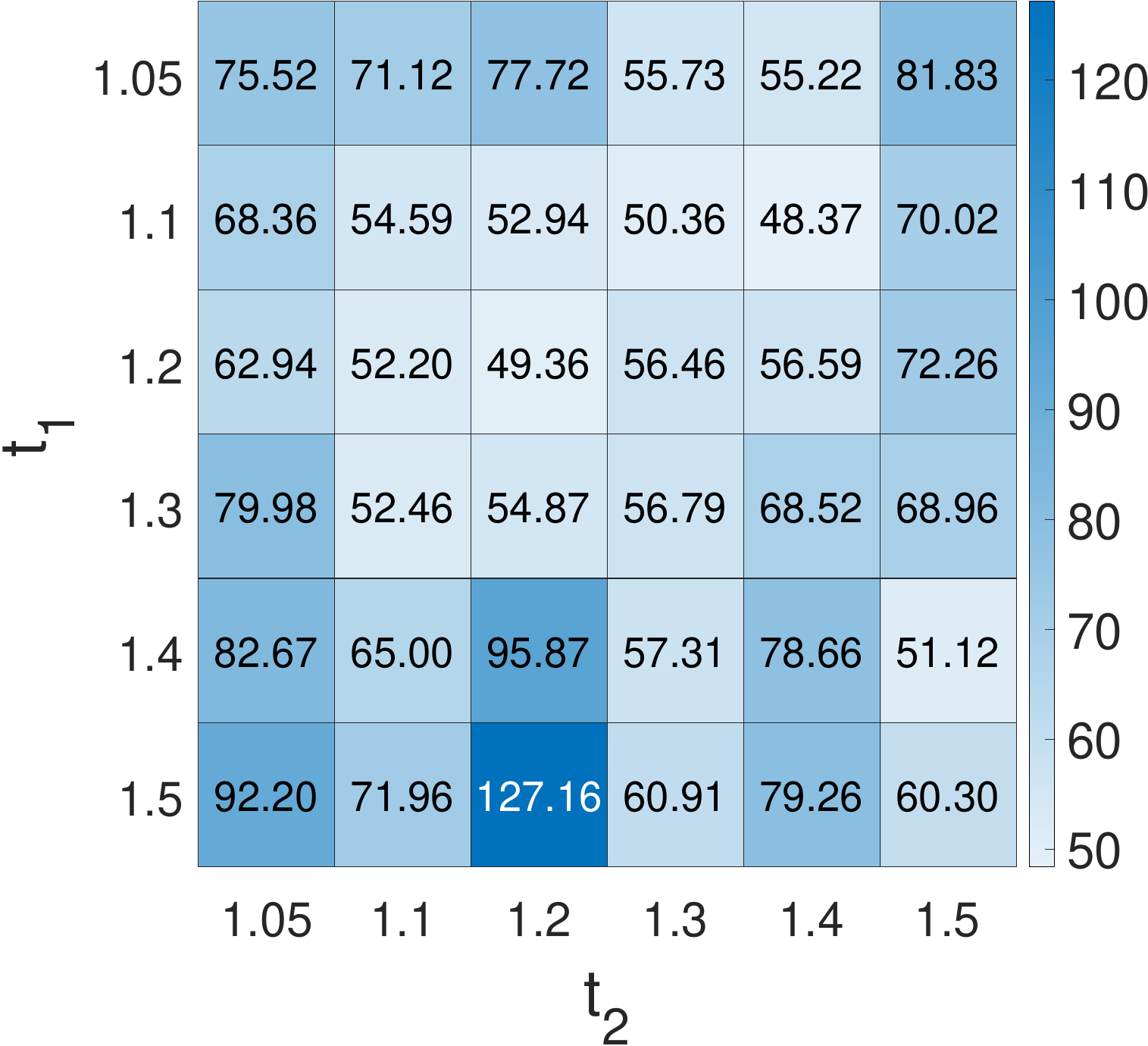} \hspace{4mm}
    \includegraphics[width=0.47\linewidth]{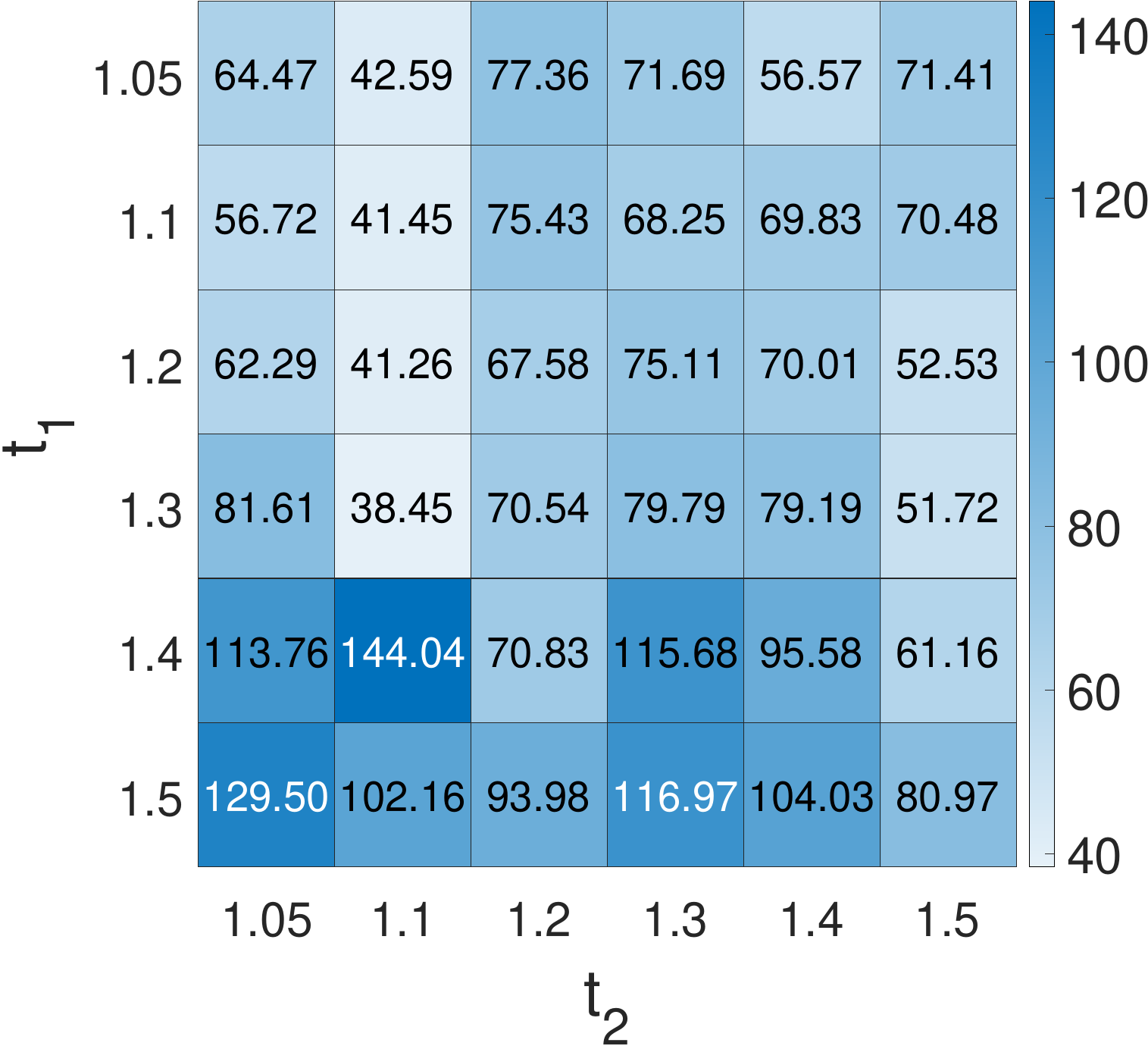}
    \caption{ Average convergence time with respect to the parameters $t_{1}$ and $t_{2}$ on the lp\_ship12l (left) and BreastMNIST (right) datasets, respectively. }  
     \label{figsensitivity1}
\end{figure*}

Additionally, we also analyze the impact of the parameters $t_{1}$ and $t_{2}$ in our proposed method on convergence performance. Specifically, we consider a discrete set of values for the parameters $t_1$ and $t_2$: 
$\{1.05, 1.1, 1.2, 1.3, 1.4, 1.5\}$. 
For each combination $(t_1, t_2)$ in this $6\times 6$ grid, we run the IBPL$^+$-TP algorithm on the lp\_ship12l dataset for $\ell_0$-SNMF problem and on the BreastMNIST dataset for $\ell_0$-SNCP problem, and record the time required to reach the stopping criterion $\frac{|J(x^{k+1})-J(x^k)|}{|J(x^0)|} < 10^{-6}$. All other parameters remain unchanged. 
The average convergence time results over 10 independent runs are summarized in the heatmaps of Figure \ref{figsensitivity1}, where the color hue of the corresponding entry also indicates the average convergence time when the stopping criterion is met.

From Figure \ref{figsensitivity1}, we have the following observations. 

\begin{itemize}
    \item [1)] For the lp\_ship12l dataset, the convergence performance of the proposed method generally initially improves and then declines when $t_{1}$ increases. 
    However, when $t_1\geq 1.4$ or $t_2$ is large or small, the convergence performance is relatively sensitive to $t_2$.  
    Notably, the proposed method exhibits robust and favorable convergence performance when $t_1$ and $t_2$ are selected within $[1.1, 1.3]$, with relatively small variation in convergence speed across this range. Outside this region, convergence performance becomes more sensitive to parameter choices, indicating potential numerical instability. Therefore, selecting $t_1$ and $t_2$ within this stable region is recommended for practical use, which also justifies our experimental settings.

    \item [2)] For the BreastMNIST dataset, the convergence performance of the proposed method also generally initially improves and then declines when $t_{1}$ increases. 
    However, when $t_1\geq 1.4$ or $t_2$ is large or small, the convergence performance is also relatively sensitive to $t_2$.     
      Notably, the proposed method exhibits robust and favorable convergence performance when $t_1$ is selected within $[1.1, 1.3]$ and $t_2$ is selected within $[1.2, 1.4]$, with relatively small variation in convergence speed across this region. Outside this region, convergence performance becomes more sensitive to parameter choices, indicating potential numerical instability. Therefore, selecting $t_1$ and $t_2$ within this stable region is recommended for practical use, which also justifies our experimental settings. 
\end{itemize}

\section{Conclusion}
\label{conclu}

In this paper, we proposed a novel method named the inertial block proximal linearized method with two-phase adaptive momentum (IBPL$^+$-TP) for solving a class of multiblock nonsmooth nonconvex optimization problems. 
The proposed method allows the use of two different extrapolation points and can eliminate the constraint relationship between the extrapolation parameters of these two extrapolation points and other parameters, while also employing a two-phase adaptive momentum strategy that explicitly separates the rapid descent phase from the steady refinement phase to effectively update the extrapolation parameters, thereby improving the flexibility and enhancing the numerical performance. Furthermore, while maintaining the above advantages, we proved that our method ensures the convergence of this class of problems, and we also established both the global convergence and the convergence rate of our method. 
We applied our method to solve the nonconvex and nonsmooth $\ell_0$-SNMF and $\ell_0$-SNCP problems. 
The experimental results on these two problems demonstrated that our proposed method clearly outperforms some state-of-the-art methods. 
Additionally, we investigated both the numerical stability and parameter sensitivity of the proposed method. 
For future work, we plan to propose new mathematical tools to explore how to further relax the basic assumptions of the proposed method to improve the generalization ability and practical utility.

\bibliographystyle{TRR}
\bibliography{refer}

\begin{acks}
The work was supported in part by the CEA Youth Key Project on Earthquake Information (No. CEAITNS202607), the Spark Program of Earthquake Sciences of China Earthquake Administration (XH25033YB), and Earthquake Science Technology Innovation Team Project of Yunnan Province (CXTD202507). 
\end{acks}

\section*{Declaration of conflicting interests}
The author declared no potential conflicts of interest with respect to the research, authorship, and/or publication of this paper.

\section*{Data availability statement}
All data used in this paper are publicly available data, and the URLs for obtaining them are provided in the footnotes on pages 14 and 17, respectively. 

\section*{Author contribution statement}
The author confirms sole responsibility for the following: study conception and algorithm framework design, data collection, convergence analysis, experimental design and analysis, interpretation of experimental results, and manuscript preparation.

\end{document}